\newtheorem{thm}{Theorem}[section]
\newtheorem{cor}[thm]{Corollary}
\newtheorem{lem}[thm]{Lemma}
\newtheorem{prop}[thm]{Proposition}
\newtheorem{conj}[thm]{Conjecture}
\theoremstyle{definition}
\theoremstyle{remark}
\newtheorem{rem}[thm]{Remark}
\numberwithin{equation}{section}
\let\oldFootnote\footnote
\newcommand\nextToken\relax
\renewcommand\footnote[1]{%
    \oldFootnote{#1}\futurelet\nextToken\isFootnote}
\newcommand\isFootnote{%
    \ifx\footnote\nextToken\textsuperscript{,}\fi}
\newcommand{\F}{\mathcal{F}}
\newcommand{\Ff}{\mathbb{F}}
\newcommand{\Z}{\mathbb{Z}}
\newcommand{\Rr}{\mathbb{R}}
\newcommand{\Hh}{\mathcal{H}}
\newcommand{\Tr}{\mbox{Tr}}
\newcommand{\supp}{\mbox{supp}}
\newcommand{\rad}{\mbox{rad}}
\newcommand{\Res}{\mbox{Res}}
\begin{document}

\title[Low-lying zeros of elliptic curve L-functions over Function Fields]{Low-lying zeros in families of elliptic curve L-functions over Function Fields}%


\author{Patrick Meisner}
\address{KTH Royal Institute of Technology}
\email{pfmeisner@gmail.com}
\author{Anders S\"{o}dergren}
\address{Department of Mathematical Sciences, Chalmers University of Technology and the \newline 
\rule[0ex]{0ex}{0ex}\hspace{9pt} University of Gothenburg, SE-412 96 Gothenburg, Sweden}
\email{andesod@chalmers.se}

\thanks{The first author was supported by the Verg foundation. The second author was supported by a grant from the Swedish Research Council (grant 2016-03759).}

\begin{abstract}

We investigate the low-lying zeros in families of $L$-functions attached to quadratic and cubic twists of elliptic curves defined over $\Ff_q(T)$. In particular, we present precise expressions for the expected values of traces of high powers of the Frobenius class in these families with a focus on the lower order behavior. As an application we obtain results on one-level densities and we verify that these elliptic curve families have orthogonal symmetry type. In the quadratic twist families our results refine previous work of Comeau-Lapointe. Moreover, in this case we find a lower order term in the one-level density reminiscent of the deviation term found by Rudnick in the hyperelliptic ensemble. On the other hand, our investigation is the first to treat these questions in families of cubic twists of elliptic curves and in this case it turns out to be more complicated to isolate lower order terms due to a larger degree of cancellation among lower order contributions.  

\end{abstract}
\maketitle
\section{Introduction}\label{intro}

The Katz--Sarnak heuristics \cite{Katz-Sarnak} is concerned with the distribution of low-lying zeros in families of $L$-functions and predicts that this distribution is determined by a certain random matrix model called the symmetry type of the family (see also \cite{SST} and the references therein). In this paper we are interested in families of $L$-functions attached to elliptic curves, where quantities related to low-lying zeros have been studied for a comparatively long time due to their close relation to the average rank of the elliptic curves in the family via the Birch and Swinnerton-Dyer conjecture; see, e.g., \cite{Brumer,Fiorilli,Goldfeld,Heath-Brown}. For families of elliptic curves defined over the rationals, the systematic investigation of low-lying zeros and the Katz--Sarnak heuristic started with papers by Miller \cite{Miller} and Young \cite{Young1,Young2} in the early 2000's. More recent contributions include \cite{DHP,FPS1,HKS,HMM} that focus on lower order terms in the one-level density and the corresponding predictions of the $L$-functions ratios conjecture.     

In the present paper we investigate families of $L$-functions attached to elliptic curves defined over the function field $\Ff_q(t)$. Recall that additional tools are available in the function field setting since, for example, the Riemann Hypothesis is a celebrated theorem due to Deligne \cite{Deligne}. Already the pioneering work by Katz and Sarnak \cite{Katz-Sarnak} studied low-lying zeros in families of quadratic twists of elliptic curves defined over $\Ff_q(t)$ in the limit where both $q$ and the degree of the twists, i.e.\ the degree of the polynomials that parameterize the family, tend to infinity. However, it is expected that keeping $q$ fixed while the degree of the twists tend to infinity is a closer analogue of quadratic twist families in the number field setting. Important results in this direction were proved by Rudnick \cite{Rudnick} and Bui--Florea \cite{BuiFlorea} who investigated the low-lying zeros of quadratic Dirichlet $L$-functions in the hyperelliptic ensemble. More recently, Comeau-Lapointe \cite{C-L} investigated expected values of traces of high powers of the Frobenius class and the one-level density of families of quadratic twists of elliptic curves in this context and used the results to give upper bounds on the average rank in these families. In  this paper, we refine results in \cite{C-L} to isolate lower order terms and compare the structure of our results with the results of Rudnick \cite{Rudnick} for quadratic Dirichlet $L$-functions. 

In recent years, there has been an increased interest in a variety of different aspects of higher order characters and twists; see, e.g., \cite{BaierYoung,CP1,CP2,C-L,DFK,DFL1,DFL2,DG,Meisner1,Meisner2}. Motivated by this development, we investigate expected values of traces of high powers of the Frobenius class and the one-level density of families of cubic twists of elliptic curves of the form $y^2=x^3+B$ defined over $\Ff_q(t)$. In this case we are not able to isolate lower order terms and we discuss what is needed in order to obtain refined results also in this situation.

We now turn to a precise description of our results.

\subsection{Setup}

Fix a prime $p\not=2,3$ and let $q=p^m$ for some power $m\in\Z_{\geq1}$. For simplicity, assume $q\equiv 1 \bmod{6}$. Let $E$ be an elliptic curve defined over $\Ff_q(T)$ given by the minimal Weierstrass equation $y^2=x^3+Ax+B$, where $A,B\in \Ff_q[T]$. Then the $L$-function attached to $E$ is (cf., e.g., \cite[Lecture 1]{Ulmer})
\begin{align} \label{Lfuncdef}
L(u,E) := \prod_{P|\Delta} \left( 1- a_P(E)u^{\deg(P)} \right)^{-1} \prod_{P\nmid \Delta} \left( 1 - a_P(E)u^{\deg(P)} + u^{2\deg(P)} \right)^{-1},
\end{align}
where $\Delta =\Delta(E) = 4A^3+27B^2$ is the discriminant of $E$ and
$$\#E(P) = q^{\deg(P)}+1  - a_P(E) q^{\deg(P)/2}.$$
Here $E(P)$ is the curve\footnote{If $P$ is a prime of good reduction, then $E(P)$ is in fact an elliptic curve.} obtained from reducing $E$ modulo a prime polynomial $P$ and $\#E(P)$ denotes the number of $\Ff_{q^{\deg(P)}}$-rational points on the non-singular locus of $E(P)$. If it is clear which elliptic curve we are referring to, we will simply write $a_P$ instead of $a_P(E)$. Recall, in particular, that with the above normalization the Hasse--Weil bound states that $|a_P|\leq 2$. Recall also that $L(u,E)$ is a polynomial of degree
$$\mathfrak{n} = \mathfrak{n}_E := \deg(M)+2\deg(\dot A)-4$$
all of whose zeros lie on the ``critical line" $|u|=q^{-1/2}$, where $M$ is the product of prime polynomials with multiplicative reduction and $\dot A$ is the product of prime polynomials with additive reduction (see \cite[Lecture 1]{Ulmer}). Furthermore, $L(u,E)$ satisfies the functional equation
$$L(u,E) = \epsilon(E)(\sqrt{q}u)^{\mathfrak{n}_E}L\left(\frac{1}{qu},E\right),$$
where $\epsilon(E)\in \{\pm1\}$ is the root number of the elliptic curve $E$. 

We are interested in investigating the one-level density of the zeros of these $L$-functions. That is, for an even Schwartz test function $f$, we define the one-level density of $E$ as
$$\mathcal{D}(E,f) := \sum_{\theta} f\left( \mathfrak{n} \frac{\theta}{2\pi} \right),$$
where the sum is over all $\theta\in\Rr$ such that $q^{-1/2}e^{i\theta}$ is a zero of $L(u,E)$, counted with multiplicity. Since $L(u,E)$ is a polynomial with all its zeros on the critical line, we can find a unitary $\mathfrak{n}\times\mathfrak{n}$ matrix $\Theta_E$ (in fact a conjugacy class of unitary matrices), called the Frobenius, such that
\begin{align}\label{FrobDef}
L(u,E) = \det(1-\sqrt{q}u\Theta_E).
\end{align}
Defining the one-level density of a unitary $\mathfrak{n}\times\mathfrak{n}$ matrix $U$ as
$$\mathcal{D}(U,f) = \sum_{j=1}^{\mathfrak{n}} \sum_{n\in\Z} f\left(\mathfrak{n}\left(\frac{\theta_j}{2\pi} -n\right)\right),$$
where $\theta_j$ $(1\leq j\leq \mathfrak{n})$ are the eigenangles of $U$, we immediately get the relation
$$\mathcal{D}(E,f) = \mathcal{D}(\Theta_E,f).$$
Finally, we may apply Poisson summation to obtain, for any unitary $\mathfrak{n}\times\mathfrak{n}$ matrix,
\begin{align}\label{Poisson}
\mathcal{D}(U,f) = \frac{1}{\mathfrak{n}} \sum_{n\in\mathbb{Z}} \widehat{f}\left( \frac{n}{\mathfrak{n}} \right) \Tr(U^n).
\end{align}
Hence, for Schwartz test functions $f$ whose Fourier transforms are supported in $(-\alpha,\alpha)$, to determine the expected value of $\mathcal{D}(E,f)$ as $E$ ranges over some family of elliptic curves, it is enough to determine the expected value of $\Tr(\Theta_E^n)$ for $n<\alpha \mathfrak{n}$.

\subsection{Quadratic twists}

The first family we will be interested in is the family of quadratic twists of a given elliptic curve $E$. That is, if $E: y^2=x^3+Ax+B$ and $D$ is a polynomial, then we define the quadratic twist of $E$ by $D$ as the curve with affine model
$$E_D : y^2 = x^3+AD^2x+BD^3.$$
As $D$ varies, these equations will give distinct elliptic curves if and only if the polynomials $D$ are square-free and coprime to the discriminant $\Delta$ of $E$. Moreover, we see that all the primes that divide $D$ will have additive reduction and that if $D$ is monic then the prime at infinity will have the same reduction type on $E_D$ as it did on $E$ (see Appendix \ref{Section:comments}). Therefore, the degree of $L(u,E_D)$ will be
$$\mathfrak{n}_{E_D} = \mathfrak{n} + 2\deg(D).$$
Hence, if we consider the family of twists coming from the set
$$\Hh_N(\Delta) := \big\{ D\in \Ff_q[T]: D \mbox{ monic, square-free, coprime to $\Delta$ and } \deg(D)=N\big\},$$
then we see that this will form a family of distinct elliptic curves all of whose Frobenius elements have the same size.

For any $D\in\Hh_N(\Delta)$ let $\epsilon(E_D)$ denote the root number of the $L$-function attached to $E_D$. Then (by \cite[Proposition 4.3]{BH}) 
$$\epsilon(E_D) = \epsilon_N\epsilon(E) \chi_D(M),$$
where $\chi_D = \left(\frac{D}{\cdot}\right)$ is the Kronecker symbol and $\epsilon_N = \pm1$, depending only on the value of $\deg(D)=N$. Therefore we define the sets
$$\Hh_N^{+}(\Delta) := \big\{D\in \Hh_N(\Delta) : \chi_D(M) = \epsilon_N\epsilon(E) \big\}$$
and 
$$\Hh_N^{-}(\Delta) := \big\{D\in \Hh_N(\Delta) : \chi_D(M) = -\epsilon_N\epsilon(E) \big\}. $$
We will typically, however, just write $\Hh_N^{\pm}(\Delta)$ to mean either the set $\Hh_N^+(\Delta)$ or the set $\Hh_N^-(\Delta)$. 

It was recently proven by Comeau-Lapointe \cite{C-L} that, for $\epsilon>0$, $n\in\Z_{\geq1}$ and $N> 4\mathfrak{n}+16$, the averages of traces of powers of $\Theta_{E_D}$ satisfy\footnote{Here, and throughout this paper, we use the convention that for any finite and non-empty set $S$ and any function $\phi$ on $S$, $\left\langle \phi\right\rangle_S = \frac{1}{|S|} \sum_{s\in S} \phi(s)$.}\footnote{Note that the family considered in loc. cit. is not the same as the one stated here. However, one may easily deduce this result from that of \cite{C-L}.}
\begin{align}\label{AntRes}
    \big\langle Tr(\Theta^n_{E_D}) \big\rangle_{\Hh^{\pm}_N(\Delta)} = \eta_2(n) + O_{\epsilon}\left( (n+N)N^{2\mathfrak{n}+11}\left(\frac{1}{q^{N/8}} + \frac{1}{q^{\epsilon N}} + \frac{q^{n/2}}{q^{(1-\epsilon)N}}\right) + \frac{n^2}{q^{n/4}} \right),
\end{align}
where (see \cite[Theorem 4]{DS})
$$\eta_2(n) := \int_{\mathrm{O}(\mathfrak{n}+2N)} \Tr(U^n)\,dU = \begin{cases} 1 & 2|n, \\ 0 & 2\nmid n. \end{cases}$$
This is then enough to deduce that if $\supp(\widehat{f}) \subset (-1,1)$, then
\begin{align}\label{AntRes2}
    \big\langle \mathcal{D}(E_D,f) \big\rangle_{\Hh^{\pm}_N(\Delta)}  = \int_{\mathrm{O}(\mathfrak{n}+2N)} \mathcal{D}(U,f)\,dU + O\left(\frac{1}{N}\right).
\end{align}

\begin{rem}
For every sufficiently nice family of elliptic curves $\F$, the Katz--Sarnak heuristic predicts that
$$\big\langle \mathcal{D}(E,f) \big\rangle_{\F} = \int_G \mathcal{D}(U,f)\,dU,$$
where $G$ is a compact Lie group indicating the symmetry type of the family and $dU$ is the Haar measure on $G$. 
Recall that the one-level densities of the three orthogonal symmetry types $\mathrm{O}$, $\mathrm{SO}(even)$ and $\mathrm{SO}(odd)$ agree for test functions whose Fourier transforms are supported in $(-1,1)$. Therefore, for the sake of tidiness, we have chosen to state all results in terms of the symmetry type $\mathrm{O}$.
\end{rem}

\begin{rem}
In \eqref{AntRes} and \eqref{AntRes2} the implied constants depend on $E$. All implied constants in the rest of this paper are similarly allowed to depend on the base elliptic curve. Moreover, throughout this paper we implicitly restrict our attention to non-empty families $\Hh_N^{\pm}(\Delta)$. Note that a family of this type is empty only if $M=1$ so that the root number is constant (see Lemma \ref{pmSizeLem}).
\end{rem}

In this paper, we are interested in determining lower order terms in the estimate \eqref{AntRes}. Specifically, to deduce the exact form of the error term $O(\frac{n^2}{q^{n/4}})$.

\begin{thm}\label{QuadThm}
Let $E$ be an elliptic curve defined over $\Ff_q(T)$ and given by the minimal Weierstrass equation $y^2=x^3+Ax+B$, where $A,B\in \Ff_q[T]$. Let $n\in\Z_{\geq1}$ and assume that $M$ is not a prime of odd degree dividing $n$. Then, for any $\epsilon>0$ and $N> 4\mathfrak{n}+16$, we have
\begin{multline*}
    \big\langle\Tr(\Theta^n_{E_D})\big\rangle_{\Hh^{\pm}_N(\Delta)} = \eta_2(n) \left(1 + \frac{\Tr\big(\Theta^{n/2}_{sym^2E}\big)}{q^{n/4}} + \frac{\mathcal{D}(n)}{q^{n/2}}\right) \\ +  O_{\epsilon}\left(  (n+N)N^{2\mathfrak{n}+11}\left(\frac{1}{q^{N/8}} + \frac{1}{q^{\epsilon N}} + \frac{q^{n/2}}{q^{(1-\epsilon)N}}\right)\right),
\end{multline*}
where $\Theta_{sym^2E}$ is the Frobenius element attached to the symmetric square $L$-function $L(u,sym^2E)$ and $\mathcal{D}(n)$ is given by \eqref{DDef}; in particular, $\mathcal{D}(n) \ll \tau(n)+\deg(\Delta)$.
\end{thm}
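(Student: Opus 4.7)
The plan is to refine the explicit-formula expansion that underlies the Comeau-Lapointe estimate \eqref{AntRes} by one more order, isolating from the Comeau-Lapointe error $O(n^2/q^{n/4})$ its main deterministic part $\Tr(\Theta^{n/2}_{sym^2E})/q^{n/4}$. Starting from \eqref{Lfuncdef} and \eqref{FrobDef}, a logarithmic derivative of $L(u,E_D)$ gives
\begin{equation*}
\Tr(\Theta^n_{E_D}) = -\frac{1}{q^{n/2}}\sum_{\substack{k\deg(P)=n\\ P\nmid D\Delta}}\deg(P)\bigl(\alpha_P^k+\beta_P^k\bigr)\chi_D(P)^k + R_D(n),
\end{equation*}
where $\alpha_P,\beta_P$ are the Frobenius roots of $E$ at $P$, we have used $a_P(E_D)=\chi_D(P)a_P(E)$ for primes of good reduction, and $R_D(n)$ gathers the contributions from primes dividing $D\Delta$.

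I would then split the sum according to the parity of $k$. For odd $k$, $\chi_D(P)^k=\chi_D(P)$ is a nontrivial quadratic character; averaging over $\Hh^{\pm}_N(\Delta)$ and executing the resulting character sums via Poisson summation and quadratic reciprocity as in \cite{C-L,Rudnick}, but retaining one extra term in the main-term expansion, produces the stated error. The root-number sign restriction defining $\Hh^{\pm}_N(\Delta)$ introduces an additional factor $\chi_D(M)$, handled by multiplicativity and a division into cases according to $\gcd(P,M)$; the hypothesis that $M$ is not a prime of odd degree dividing $n$ is precisely what rules out the degenerate situation in which $\chi_D(PM)$ becomes trivial on the critical prime range, a case which would otherwise produce an unwanted extra main term.

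For even $k$, $\chi_D(P)^k=1$ whenever $P\nmid D$, so the $D$-average reduces to $\langle\mathbf{1}[P\nmid D]\rangle_{\Hh^{\pm}_N(\Delta)}=1+O(q^{-\deg P})$. The heart of the proof is to recognize, upon using $\alpha_P\beta_P=1$ to expand $\alpha_P^{2j}+\beta_P^{2j}$ in terms of the $P$-local coefficients of $L(u,sym^2E)$, that
\begin{equation*}
-\frac{1}{q^{n/2}}\sum_{\substack{k\deg(P)=n\\ k\text{ even}}}\deg(P)\bigl(\alpha_P^k+\beta_P^k\bigr) = 1 + \frac{\Tr(\Theta^{n/2}_{sym^2E})}{q^{n/4}} + (\text{residual}),
\end{equation*}
where the ``$1$'' comes from applying the function-field prime polynomial theorem to the $k=2$, $\deg P=n/2$ sum, and the symmetric-square trace arises by running the logarithmic-derivative calculation backwards for $L(u,sym^2E)$ under the paper's normalization of $\Theta_{sym^2E}$. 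The residual, together with the density correction $1-\langle\mathbf{1}[P\nmid D]\rangle$ for even $k$, the bad-prime term $R_D(n)$, and the subleading terms in the function-field prime polynomial theorem, is of size $O((\tau(n)+\deg\Delta)/q^{n/2})$ and is packaged into $\mathcal{D}(n)/q^{n/2}$, producing the explicit formula \eqref{DDef} and the bound $\mathcal{D}(n)\ll\tau(n)+\deg\Delta$.

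The principal obstacle is this last bookkeeping: verifying that, after aligning each even-$k$ prime-power block of $\Tr(\Theta^n_{E_D})$ with the corresponding block of $\Tr(\Theta^{n/2}_{sym^2E})/q^{n/4}$, the leftover terms telescope cleanly into $\mathcal{D}(n)/q^{n/2}$ with the stated uniform bound on $\mathcal{D}(n)$. The technical character-sum estimates required for odd $k$ are essentially a one-step refinement of the analysis in \cite{C-L,Rudnick}, whereas the identification of the symmetric-square trace is the genuinely new algebraic step that extracts the lower-order term.
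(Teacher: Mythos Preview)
Your approach is the paper's: split by parity of $k$, extract the symmetric-square trace from even $k$, and bound the odd-$k$ character sums. One correction, though: the odd-$k$ contribution requires \emph{no} refinement. The paper simply invokes \cite[Proposition~7.2]{C-L} as a black box for the $d=1$ term and trivially bounds the $d\geq 3$ terms, producing exactly the stated error $(n+N)N^{2\mathfrak{n}+11}(\cdots)$; there is no ``extra term to retain'' on this side. The entire improvement over \eqref{AntRes} lies in the even-$k$ analysis, which you have correctly identified as the heart of the argument.

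For even $k$ the mechanism is slightly cleaner than your sketch suggests. Writing $a^*_{1,P^{2j}}=a^*_{2,P^j}-1$ for good primes (Lemma~\ref{PowerReduce}) and summing over \emph{all} even $k$ at once, the ``$-1$'' yields $-q^{n/2}$ \emph{exactly} via the identity $\sum_{d\mid m}d\,\pi(d)=q^m$ (not just the $k=2$ block, and with no subleading PPT error to absorb), while the $a^*_{2,P^j}$ part gives $-q^{n/4}\Tr(\Theta^{n/2}_{sym^2E})$ by \eqref{SymmEllTraceForm}; this is packaged as Lemma~\ref{NiceTraceFormula}(2). The density correction $|P|/(|P|+1)$ from Proposition~\ref{CoprimeProp} and the bad-prime discrepancies then assemble into $\mathcal{D}(n)$ with no delicate telescoping: each of the $O(\tau(n))$ divisor blocks contributes $O(1)$ from the density factor, and the bad primes contribute $O(\deg\Delta)$ in aggregate, giving $\mathcal{D}(n)\ll\tau(n)+\deg(\Delta)$ directly.
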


\begin{rem}
In the case where $M$ is a prime of odd degree dividing $n$, then the result still holds with the only difference that there is an additional contribution to $\mathcal{D}(n)$. See \eqref{M=P} and the brief discussion thereafter for further details.
\end{rem}

While we are able to improve the error term slightly by finding some secondary terms, we retain the error term containing the expression $\frac{q^{n/2}}{q^{(1-\epsilon)N}}$ and so we are not able to extend the range of $\supp(\widehat{f})$ in \eqref{AntRes2}. However, we are able to write down a term in the one-level density that is reminiscent of the deviation term that Rudnick found for the hyperelliptic ensemble (cf.\ \cite[Corollary 3]{Rudnick}).

\begin{cor}\label{QuadCor}
Let $E$ be an elliptic curve defined over $\Ff_q(T)$ as in Theorem \ref{QuadThm} and let $f$ be an even Schwartz test function. If $\supp(\widehat{f})\subset (-1,1)$, then
$$\big\langle \mathcal{D}(E_D,f)\big\rangle_{\Hh_N^{\pm}(\Delta)} = \int_{\mathrm{O}(\mathfrak{n}+2N)} \mathcal{D}(U,f)\,dU + \frac{dev_E(f)}{N} + O_{\epsilon}\left(\frac{1}{N^{2-\epsilon}}\right),$$
where
\begin{multline*}
     dev_E(f) = \widehat{f}(0)\Bigg(-\frac{1}{q}\frac{L'(q^{-1},sym^2E)}{L(q^{-1},sym^2E)}+ \sum_{P \nmid \Delta} \frac{\deg(P)}{|P|+1} \sum_{d=1}^{\infty}\frac{a^*_{1,P^{2d}}}{|P|^d} \\ - \sum_{P|\Delta} \deg(P) \sum_{d=1}^{\infty} \frac{a^*_{1,P^{2d}} - a^*_{2,P^d}+1}{|P|^d} \Bigg),
\end{multline*}
$a^*_{1,P^d}$ is the $P^d$-th coefficient of $\frac{L'(u,E)}{L(u,E)}$, $a^*_{2,P^d}$ is the $P^d$-th coefficient of $\frac{L'(u,sym^2 E)}{L(u,sym^2E)}$ and $|P|=q^{\deg(P)}$.
\end{cor}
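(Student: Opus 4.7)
The plan is to start from the Poisson expansion \eqref{Poisson} for $\Theta_{E_D}$, average over $\Hh_N^{\pm}(\Delta)$, isolate the $n=0$ term, and invoke parity together with the fact that Frobenius traces are real to obtain
\begin{equation*}
\big\langle \mathcal{D}(E_D,f)\big\rangle_{\Hh_N^{\pm}(\Delta)} = \widehat{f}(0) + \frac{2}{\mathfrak{n}+2N}\sum_{n\geq1}\widehat{f}\!\left(\tfrac{n}{\mathfrak{n}+2N}\right)\big\langle \Tr(\Theta_{E_D}^n)\big\rangle_{\Hh_N^{\pm}(\Delta)}.
\end{equation*}
Substituting the asymptotic from Theorem \ref{QuadThm}, the leading $\eta_2(n)$ piece reproduces the random matrix integral $\int_{\mathrm{O}(\mathfrak{n}+2N)}\mathcal{D}(U,f)\,dU$ exactly. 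What remains is to identify the contributions of the two correction terms $\Tr(\Theta_{sym^2E}^{n/2})/q^{n/4}$ and $\mathcal{D}(n)/q^{n/2}$ and to show that the residual error from Theorem \ref{QuadThm} is absorbed into $O_\epsilon(1/N^{2-\epsilon})$. The latter follows from $\supp(\widehat{f})\subset(-1,1)$: for fixed $f$ the effective range is $n\leq(1-\delta)(\mathfrak{n}+2N)$ for some $\delta>0$, so taking $\epsilon<\delta/2$ makes $q^{n/2-(1-\epsilon)N}$ decay geometrically in $N$, and the remaining $q^{-N/8}$ and $q^{-\epsilon N}$ terms are dealt with similarly.

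For the symmetric-square correction, I would substitute $n=2m$ and apply the logarithmic derivative identity
\begin{equation*}
\sum_{m\geq1}\frac{\Tr(\Theta_{sym^2E}^m)}{q^{m/2}} = -\frac{1}{q}\frac{L'(q^{-1},sym^2E)}{L(q^{-1},sym^2E)},
\end{equation*}
which comes from differentiating $L(u,sym^2E)=\det(1-\sqrt{q}u\Theta_{sym^2E})$ at $u=q^{-1}$. Writing $\widehat{f}(2m/(\mathfrak{n}+2N)) = \widehat{f}(0)+O(m/N)$ and $2/(\mathfrak{n}+2N)=1/N+O(1/N^2)$, the main term produces exactly the $-\widehat{f}(0)q^{-1}L'/L(q^{-1},sym^2E)$ summand of $dev_E(f)/N$, while the truncation errors collapse to $O(1/N^2)$ thanks to absolute convergence of $\sum_m m\,|\Tr(\Theta_{sym^2E}^m)|q^{-m/2}$.

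The $\mathcal{D}(n)/q^{n/2}$ piece is handled in the same spirit, but with the explicit form of $\mathcal{D}(n)$ produced during the proof of Theorem \ref{QuadThm} (referenced by \eqref{DDef}). After substituting $n=2m$, interchanging the sum over $m$ with the implicit prime-summation hidden in $\mathcal{D}(2m)$, and replacing $\widehat{f}(2m/(\mathfrak{n}+2N))$ by $\widehat{f}(0)$ at the cost of $O(1/N^2)$, each remaining inner series is a Dirichlet-type sum whose evaluation at $u=q^{-1}$ matches one of the two prime sums appearing in $dev_E(f)$: the $P\nmid\Delta$ sum with local weight $\deg(P)/(|P|+1)$ and the $P\mid\Delta$ sum with weight $\deg(P)$. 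Absolute convergence is guaranteed by the Hasse--Weil-type bounds on $a^*_{1,P^{2d}}$ and $a^*_{2,P^d}$, which grow at most polynomially in $d\deg(P)$ against the geometric factor $|P|^{-d}$.

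The main obstacle is the bookkeeping in this last step: matching the combinatorial definition of $\mathcal{D}(n)$ to the precise local weights in $dev_E(f)$, in particular explaining why the good-reduction primes acquire the factor $1/(|P|+1)$ (traceable to the Euler factor $(1-a_P u^{\deg P}+u^{2\deg P})^{-1}$ evaluated on the critical line), while the bad-reduction primes carry the simpler $\deg(P)$ weight together with the extra $-a^*_{2,P^d}+1$ correction coming from the difference between the local factors of $L(u,E)$ and $L(u,sym^2E)$ at ramified places. Once this identification is in place, assembling all contributions and consolidating the error terms as above yields the claimed formula.
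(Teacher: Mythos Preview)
Your proposal is correct and follows essentially the same route as the paper's proof: Poisson expansion, insertion of Theorem \ref{QuadThm}, identification of the $\eta_2$ piece with the orthogonal integral, and treatment of the two secondary terms by Taylor-expanding $\widehat{f}$ around $0$ and summing the resulting Dirichlet-type series. One small remark: your speculative attribution of the weight $1/(|P|+1)$ to the Euler factor of $L(u,E)$ is off---it actually comes from Proposition \ref{CoprimeProp}, i.e.\ from the density of $D\in\Hh_N^\pm(\Delta)$ coprime to $P$, and is already baked into \eqref{DDef}---but since you correctly defer to \eqref{DDef} this does not affect the argument. Your error treatment via absolute convergence of $\sum_m m\,q^{-m/2}$ is in fact slightly cleaner than the paper's splitting at $\phi(N)=N^{\epsilon}$ and would yield $O(1/N^2)$ for those pieces rather than $O_\epsilon(1/N^{2-\epsilon})$.
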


In the family of $L$-functions $L(u,\chi_D)$ attached to quadratic characters, Rudnick \cite[Corollary 3]{Rudnick} showed that the one-level density is asymptotically the same as for the unitary symplectic matrices with a deviation term of the form
$$dev(f) = \widehat{f}(0) \sum_{P} \frac{\deg(P)}{|P|^2-1} - \widehat{f}(1)\frac{1}{q-1}.$$
For $f$ of small support, the main terms in the one-level density come from the prime squares. We note that $\chi_D(P^2) = \chi^2_D(P)$ and thus the contribution of the prime squares to the explicit formula is determined by the logarithmic derivative of
$$L(u,\chi^2_D) = R_D(u) \zeta_q(u),$$
where $R_D$ is a finite Euler product. Taking logarithmic derivatives, there is a simple pole at $u=q^{-1}$. The residue of $\zeta'_q/\zeta_q$ corresponds to the matrix integral whereas one can show that
$$\left\langle \frac{R'_D(q^{-1})}{R_D(q^{-1})} \right\rangle_{\Hh_{2g+1}(1)} = \sum_{P} \frac{\deg(P)}{|P|^2-1}+ O(q^{-g}).$$
To see the term containing $\widehat{f}(1)$ in $dev(f)$, one needs to analyze also the contribution from the primes to the explicit formula. As we need to restrict to functions that have $\widehat{f}(1)=0$, we do not see such a term in Corollary \ref{QuadCor}.

Now, for the quadratic twists of an elliptic curve, we again need to look at the contribution of the prime squares to the explicit formula. Here we obtain terms that contribute to the main term matrix integral, whereas the logarithmic derivative of $$L(u,sym^2E_D) = S_D(u)L(u,sym^2E),$$
where $S_D$ is a finite product of Euler factors, contributes the deviation terms in $dev_E(f)$ (see Section \ref{Subsection:QuadCor}). Note in particular that, similar to the quadratic character case, $L(u,sym^2E_D)$ is essentially constant as we vary $D$, changing only by a finite Euler product. 

Finally, we note the close connection between the formulas in the quadratic character and quadratic twist cases. Indeed, by substituting $1$ for all the $a^*_{1,P^{2d}}$ in the sum over primes of good reduction in $dev_E(f)$, we get exactly the part of $dev(f)$ corresponding to the same set of primes.\footnote{Note that $R_D$ and $\zeta_q(u)$ are related to the constant coefficients $1$ in exactly the same way as $S_D$ and $L(u,sym^2E)$ are related to the coefficients $a^*_{1,P^{2d}}$.}

\subsection{Cubic twists}

Performing a quadratic twist of an elliptic curve has the nice property that if
$L(u,E) = \sum_{F} a_F u^{\deg(F)},$
then
$L(u,E_D) = \sum_{F} a_F \chi_D(F) u^{\deg(F)}.$
Therefore, one natural extension is to consider twists of the $L$-function $L(u,E)$ by other characters, that is, to consider $L$-functions of the form
$$L(u,E,\chi) := \sum_F a_F \chi(F) u^{\deg(F)}.$$
Comeau-Lapointe \cite[Theorem 12.1]{C-L} considered the families where $\chi$ runs over characters of fixed order $\ell\not=2$, and proved that these families have unitary symmetry type. This change of symmetry type is to be expected as when you twist by non-quadratic characters the $L$-functions $L(u,E,\chi)$ fail to be $L$-functions of elliptic curves and therefore loses the orthogonal symmetry inherent in families of elliptic curve $L$-functions.

However, in this paper we choose to twist at the level of elliptic curves instead of at the level of $L$-functions. That is, if the elliptic curve has the special form
$$\widetilde{E}: y^2=x^3+B,$$
with $B\in\Ff_q[T]$, then, for any polynomial $D\in\Ff_q[T]$ coprime to $B$, we define the cubic twist of $\widetilde{E}$ as the curve with affine model
$$\widetilde{E}_D: y^2 = x^3+BD^2.$$
Similar to the case of quadratic twists, as long as $D$ is chosen to be cube-free and coprime to $B$, these will be distinct elliptic curves and all the primes that divide $D$ will have additive reduction. Furthermore, if we consider only the case $3|\deg(D)$, then the prime at infinity  will have the same reduction type on $\widetilde{E}_D$ as it did on $\widetilde{E}$ (see Appendix \ref{Section:comments}) and hence the degree of $L(u,\widetilde{E}_D)$ will be\footnote{Note that here we have to use the radical of $D$. This was not necessary for the quadratic twists since we were assuming $D$ to be square-free in that case and hence equal to its radical.}
$$\mathfrak{n}_{\widetilde{E}_D} = \mathfrak{n}_{\widetilde{E}} + 2\deg(\rad(D)).$$
Therefore, if we define the set 
\begin{multline*}
    \F_{N}(B) := \big\{D\in\Ff_q[T]: D \mbox{ monic, cube-free, } (D,B)=1,\\  \deg(\rad(D)) = N, \deg(D) \equiv 0 \bmod{3}\big\},
\end{multline*}
then the cubic twists by this family will form a family of distinct elliptic curves all of whose Frobenius elements have the same size.

\begin{thm}\label{CubicOLD}
Let $\widetilde{E}$ be an elliptic curve defined over $\Ff_q(T)$ and given by the minimal Weierstrass equation $y^2 = x^3+B$, where $B\in \Ff_q[T]$. Then, for any $\epsilon>0$ and $n\in\Z_{\geq1}$, we have  
$$\big\langle \Tr(\Theta^n_{\widetilde{E}_D})\big\rangle_{\F_{N}(B)} = \eta_2(n) + O_{\epsilon}\left(\frac{q^{n/2}e^{2n}}{Nq^{(\frac{1}{2}-\epsilon)N}} + \frac{\eta_2(n)n}{q^{n/4}} + \frac{1}{q^{n/3}} + \frac{n(\deg(\Delta)+\tau(n))} {q^{n/2}}\right),$$
where $\tau$ is the number of divisors function. Moreover, for any Schwartz test function satisfying $\supp(\widehat{f}) \subset \left(-\alpha,\alpha\right)$ for some $\alpha<\frac{1}{2} -\frac{2}{4+\log q}$, we get
\begin{align}\label{Cubic1level}
\big\langle \mathcal{D}(\widetilde{E}_D,f) \big\rangle_{\F_{N}(B)}  = \int_{\mathrm{O}(\mathfrak{n}+2N)} \mathcal{D}(U,f)\,dU + O\left(\frac{1}{N}\right).
\end{align}
\end{thm}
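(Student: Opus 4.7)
The plan is to first establish the trace asymptotic and then deduce the one-level density by inserting it into the Poisson formula \eqref{Poisson}. Starting from the logarithmic derivative of the Euler product \eqref{Lfuncdef} and comparing with the factorization \eqref{FrobDef}, I derive the explicit identity
$$
\Tr(\Theta^n_{\widetilde{E}_D})=-\frac{1}{q^{n/2}}\sum_{k\deg P=n}\deg(P)\,a_{P,k}(\widetilde{E}_D),
$$
where $a_{P,k}(\widetilde{E}_D)=\alpha_{P,D}^k+\beta_{P,D}^k$ is the $k$-th Newton power sum of the normalized Satake parameters of $\widetilde{E}_D$ at $P$ (with $a_{P,k}=a_P^k$ at bad primes). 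Using that $\widetilde{E}$ has complex multiplication by $\Z[\omega]$ and that every prime of good reduction of $\widetilde E$ is ordinary (by the running assumption $q\equiv 1\pmod 6$), the cubic twist formula at good primes $P\nmid 6BD$ takes the form $a_P(\widetilde E_D)=\chi_D(P)\alpha_P+\overline{\chi_D(P)}\beta_P$, where $\chi_D$ is the cubic residue character attached to $D$; consequently $a_{P,k}(\widetilde{E}_D)=\chi_D(P)^k\alpha_P^k+\overline{\chi_D(P)}^k\beta_P^k$ for good primes $P\nmid BD$.

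Averaging over $D\in\F_N(B)$ and splitting the sum by reduction type and by $k \bmod 3$ organizes the estimate into three parts: bad primes (those dividing $\Delta=27B^2$, and primes $P\mid D$ where $a_{P,k}(\widetilde E_D)=0$), which contribute the error $n(\deg(\Delta)+\tau(n))/q^{n/2}$; indices $k$ with $3\mid k$, where $\chi_D(P)^k$ is principal and $a_{P,k}(\widetilde E_D)=a_{P,k}(\widetilde E)$ is $D$-independent; and indices $k$ with $3\nmid k$, where the $D$-average reduces to a sum over a non-principal cubic character. For $3\mid k$ the dominant contribution is $k=3$ (requiring $3\mid n$); using the decomposition $a_{P,3}(\widetilde E)=a_P(\mathrm{sym}^3\widetilde E)-a_P(\widetilde E)$ and Deligne's RH applied to both $L(u,\widetilde E)$ and $L(u,\mathrm{sym}^3\widetilde E)$ gives the bound $O(1/q^{n/3})$, and the higher multiples $k=6,9,\dots$ are smaller. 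For $3\nmid k$ I plan to apply cubic reciprocity to rewrite $\chi_D(P)=\chi_P(D)$ (up to a root of unity), use M\"obius inversion on cubes to remove the cube-free constraint, and then apply Weil-type bounds for polynomial character sums in the style of the cubic Dirichlet analyses of \cite{CP2,DFL1,DFL2,Meisner2}. The $k=1$ case produces the dominant error $q^{n/2}e^{2n}/(Nq^{(1/2-\epsilon)N})$, the factor $e^{2n}$ tracking the Hasse bound on the Satake parameters; the $k=2$ case contributes $O(\eta_2(n)n/q^{n/4})$ (effective only for even $n$, which is encoded by the $\eta_2(n)$ factor). The main term $\eta_2(n)$ is identified with $\int_{\mathrm O(\mathfrak n+2N)}\Tr(U^n)\,dU$ (cf.\ \cite{DS}), matching the surviving contribution in analogy with the quadratic twist argument of Comeau-Lapointe leading to \eqref{AntRes}.

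The main obstacle I anticipate is securing the uniformity of the cube-free sieve in the $3\nmid k$ step: parametrizing $\F_N(B)$ by polynomials of fixed radical-degree $N$ (rather than fixed degree) forces a decomposition $D=d_1d_2^2$ with $d_1,d_2$ coprime square-free, and one must then track both $\deg(d_1)$ and $\deg(d_2)$ through the Poisson summation and Weil step so that the gain is preserved uniformly in the primes $P$ summed over. Once the trace estimate is in hand, \eqref{Poisson} yields
$$
\big\langle\mathcal D(\widetilde E_D,f)\big\rangle_{\F_N(B)}=\frac{1}{\mathfrak n}\sum_{n\in\Z}\widehat f\!\left(\frac{n}{\mathfrak n}\right)\big\langle \Tr(\Theta^n_{\widetilde E_D})\big\rangle_{\F_N(B)};
$$
the $\eta_2(n)$-part reproduces the orthogonal matrix integral as in \eqref{AntRes2}. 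The support restriction $\alpha<\tfrac12-\tfrac{2}{4+\log q}$ is then fixed by requiring the worst-case error $q^{n/2}e^{2n}/(Nq^{(1/2-\epsilon)N})$ at $n\approx 2\alpha N$ to give $q^{\alpha N}e^{4\alpha N}/q^{(1/2-\epsilon)N}=o(1)$; the remaining error terms each contribute at most $O(1/N)$ after summation against $\widehat f$.
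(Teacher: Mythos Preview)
Your proposal contains a genuine error at its core: the cubic twist formula $a_P(\widetilde E_D)=\chi_D(P)\alpha_P+\overline{\chi_D(P)}\beta_P$, and hence $a_{P,k}(\widetilde E_D)=\chi_D(P)^k\alpha_P^k+\overline{\chi_D(P)}^k\beta_P^k$, is false in general. The Satake parameters $\alpha_P,\beta_P$ are characterized by $\alpha_P+\beta_P=a_P$ and $\alpha_P\beta_P=1$; they do \emph{not} transform as $\alpha_P\mapsto\chi_D(P)\alpha_P$ under a cubic twist. What transforms cleanly is the cubic character sum $\lambda_P=q^{-\deg(P)/2}\sum_{F}\bigl(\tfrac{F^2-B}{P}\bigr)_3$, for which $a_P=-(\lambda_P+\overline{\lambda}_P)$ and $\lambda_P(\widetilde E_D)=\bigl(\tfrac{D}{P}\bigr)_3^2\lambda_P(\widetilde E)$. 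Your formula would force $\alpha_P=-\overline{\lambda}_P$ (equate the expressions at $\chi_D(P)=1,\omega,\omega^2$), hence $|\lambda_P|^2=\alpha_P\beta_P=1$; but the paper's Corollary~\ref{maincor} shows that $|\lambda_P|^2$ has average $\tfrac12$ over primes of fixed degree, so $|\lambda_P|=1$ fails generically. The text after Lemma~\ref{Lem:cubictwistcoeff} makes exactly this point.

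This is not a cosmetic issue: it breaks your derivation of the main term. With your formula, $a_{P,2}(\widetilde E_D)$ has no $D$-invariant piece (since $\chi_D(P)^2$ and $\overline{\chi_D(P)}^2$ are both non-principal), so your ``$k=2$'' case would produce only an error term and never yield $\eta_2(n)$. In reality (Lemma~\ref{atolambda}) one has $a^*_{1,P^2}(\widetilde E_D)=\chi_D(P)\lambda_P^2+\overline{\chi_D(P)}\,\overline{\lambda}_P^2+2(|\lambda_P|^2-1)$, and it is precisely the $D$-invariant part $2(|\lambda_P|^2-1)$, summed over $\deg(P)=n/2$, that supplies the main term $\eta_2(n)$ after dividing by $-q^{n/2}$. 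The paper therefore works throughout with $\lambda_P$, truncates the explicit formula via Lemma~\ref{TruncTrForm} (with $m=2$), extracts $\eta_2(n)$ from the prime-square piece via Lemma~\ref{RHbound}(3), and then bounds the remaining degree-$n$ sum by estimating the cubic character average $E_P(N)$ (Proposition~\ref{EPprop}); the factor $e^{2n}$ there is not a Hasse-bound artifact but a subconvexity-type maximum for the Dirichlet $L$-function $L(u,\psi_P)$ on $|u|=q^{-1/2-\epsilon}$ coming from Lumley's bound. Your overall strategy (split the explicit formula, average characters, then pass to \eqref{Poisson}) is sound, but you must replace $\alpha_P,\beta_P$ by $\lambda_P,\overline{\lambda}_P$ and redo the $k=2$ and $k=3$ bookkeeping accordingly.
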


Similar to Corollary \ref{QuadCor}, the main term in \eqref{Cubic1level} comes from considering the prime squares whose contribution is the sum of the matrix integral and a term determined by the logarithmic derivative of $L(u,sym^2\widetilde{E}_D)$. However, unlike the quadratic twist family, $L(u,sym^2\widetilde{E}_D)$ is not essentially constant as $D$ varies (see Section \ref{Section:cubic}) and thus we get some cancellation that prevents us from obtaining a deviation term. 

The next obvious thing to consider is the contribution from the prime cubes. Using Lemma \ref{PowerReduce}, we find that the contribution from the prime cubes is determined by the logarithmic derivative of
$L(u,sym^3 \widetilde{E}_D)L(u,\widetilde{E}_D)^{-1}.$
However, since the coefficients of $L(u,\widetilde{E}_D)$ are not obtained by a simple twist of a character (as in the quadratic twist case), we get that  $L(u,sym^3\widetilde{E}_D)$ is still not essentially constant as $D$ varies.  Although, $L(u,sym^2\widetilde{E}_D)$ and $L(u,sym^3\widetilde{E}_D)$ will have parts that are essentially constant.

In order to describe these essentially constant parts, we need to introduce some notation. For any prime $P$ and any $\widetilde{E}$, define
$$\lambda_P = \lambda_P(\widetilde{E}) := \frac{1}{q^{\deg(P)/2}} \sum_{F\bmod P} \left(\frac{F^2-B}{P} \right)_3,$$
where $\left(\frac{\cdot}{P}\right)_3$ is the cubic residue symbol modulo $P$.\footnote{We assume $q\equiv 1 \bmod{6}$ so that the cubic residue symbol is well defined.} The quantity $\lambda_P$ behaves nicely with respect to cubic twists. Namely,
\begin{align}\label{TwistedlambdaP}
\lambda_P(\widetilde{E}_D) = \left(\frac{D}{P}\right)^2_3 \lambda_P(\widetilde{E})
\end{align}
(cf.\ Lemma \ref{Lem:cubictwistcoeff}). Moreover, we get that $\lambda_P+\overline{\lambda}_P = -a_P$, where as usual $a_P$ denotes the $P$-th coefficient of $L(u,\widetilde{E})$. Therefore, it is possible to write $a^*_{1,P^d}$ in terms of the $\lambda_P$ and then determine how these vary with $D$. However, we can show that approximately half of the time $|\lambda_P|\not=1$ (see Corollary \ref{maincor}), and hence $\lambda_P$ can typically not be a root of $1-a_Pu+u^2$. Therefore, while possible, writing $a^*_{1,P^d}$ in terms of $\lambda_P$ is in general not so nice. Using Lemma \ref{atolambda} and \eqref{TwistedlambdaP} to identify parts of $a^*_{1,P^2}(\widetilde{E}_D)$ and $a^*_{1,P^3}(\widetilde{E}_D)$ that are essentially constant as we vary $D$, we get the following theorem.

\begin{thm}\label{CubicLowTerm}
Let $\widetilde{E}$ be an elliptic curve defined over $\Ff_q(T)$ and given by the minimal Weierstrass equation $y^2 = x^3+B$, where $B\in \Ff_q[T]$. Then, for any $\epsilon>0$ and $n\in\Z_{\geq1}$, we have  
\begin{multline*}
    \big\langle \Tr(\Theta^n_{\widetilde{E}_D}) \big\rangle_{\F_{N}(B)} 
    = -\frac{n/2}{q^{n/2}} \sum_{\deg(P)=\frac{n}{2}} 2(|\lambda_P|^2-1)  +\frac{n/3}{q^{n/2}} \sum_{\deg(P)=\frac{n}{3}} \big(\lambda_P^3 + \overline{\lambda}^3_P\big)\\ 
    + \frac{\widetilde{\mathcal{D}}_1(n)+\widetilde{\mathcal{D}}_2(n)}{q^{n/2}} + O_{\epsilon}\left(\frac{q^{n/2}e^{2n}}{Nq^{(\frac{1}{2}-\epsilon)N}}+\frac{1}{q^{3n/8}} + \frac{n(\deg(\Delta)+\tau(n))}{q^{n/2}} + \frac{1}{Nq^{n/2}}\right), 
\end{multline*}
where $\tau$ is the number of divisors function and $\widetilde{\mathcal{D}}_1(n)$ and $\widetilde{\mathcal{D}}_2(n)$ are defined in \eqref{CubicDForm1} and \eqref{CubicDform2}; in particular, $\widetilde{\mathcal{D}}_1(n),\widetilde{\mathcal{D}}_2(n) \ll 1$.
\end{thm}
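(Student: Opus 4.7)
The plan is to refine the proof of Theorem \ref{CubicOLD} by extracting, for each small prime-power exponent $k$, the $D$-independent portion of the average of $\Tr(\Theta^n_{\widetilde{E}_D})$ over $\F_N(B)$. The $k=2$ and $k=3$ contributions produce the two explicit main terms in the statement. Starting from the standard identity
$$\Tr(\Theta^n_{\widetilde{E}_D}) = -\frac{1}{q^{n/2}} \sum_{k\deg(P)=n} \deg(P) \bigl(\alpha_{P,D}^k + \overline{\alpha}_{P,D}^k\bigr),$$
where $\alpha_{P,D},\overline{\alpha}_{P,D}$ are the inverse Frobenius roots of $\widetilde{E}_D$ at $P$, I would split the sum according to $k$. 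For $k \geq 8$, the Hasse--Weil bound at good primes together with the prime count $O(q^{n/k}/(n/k))$ gives an unconditional contribution of $O(q^{n/k - n/2}) = O(q^{-3n/8})$, absorbed into the error. The $k=1$ terms are handled exactly as in Theorem \ref{CubicOLD} via Weil-type estimates for cubic character sums over cube-free polynomials, producing the stated error $O(q^{n/2}e^{2n}N^{-1}q^{-(1/2-\epsilon)N})$.

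For $k \in \{2,\dots,7\}$ I would use the Chebyshev-type polynomial relating $\alpha_{P,D}^k+\overline{\alpha}_{P,D}^k$ to $a_P(\widetilde{E}_D)$, combined with the decomposition $a_P(\widetilde{E}_D) = -\bigl(\lambda_P(\widetilde{E}_D) + \overline{\lambda_P(\widetilde{E}_D)}\bigr)$ from Lemma \ref{atolambda} and the transformation \eqref{TwistedlambdaP}, namely $\lambda_P(\widetilde{E}_D) = \left(\frac{D}{P}\right)_3^{2}\lambda_P(\widetilde{E})$. Writing $\lambda := \lambda_P(\widetilde{E})$ and $\chi := \left(\frac{D}{P}\right)_3$, one expands $a_P(\widetilde{E}_D)^k$ as a sum of monomials in $\lambda, \overline{\lambda}$ whose coefficient of bidegree $(a,b)$ carries the character factor $\chi^{2a+b}$; this is $D$-independent precisely when $2a+b \equiv 0 \bmod 3$. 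For $k=2$ this yields
$$\alpha_{P,D}^2+\overline{\alpha}_{P,D}^2 = 2(|\lambda|^2-1) + \chi\,\lambda^2 + \overline{\chi}\,\overline{\lambda}^{2},$$
whose $D$-independent piece $2(|\lambda|^2-1)$, weighted by $-\deg(P)/q^{n/2}$ and summed over primes of degree $n/2$, yields the first displayed main term. Similarly, using $\chi^6 = 1$ and $\alpha_{P,D}^3+\overline{\alpha}_{P,D}^3 = a_P(\widetilde{E}_D)^3 - 3a_P(\widetilde{E}_D)$, one finds
$$\alpha_{P,D}^3+\overline{\alpha}_{P,D}^3 = -(\lambda^3 + \overline{\lambda}^3) + 3(|\lambda|^2-1)\,a_P(\widetilde{E}_D),$$
whose $D$-independent piece yields the second main term, while the remaining part is proportional to $a_P(\widetilde{E}_D)$ and has small average by the $k=1$ analysis.

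For $k \in \{4,5,6,7\}$ the corresponding $D$-independent constant contributions are bounded uniformly in $D$ and, together with contributions from the bad primes $P \mid \Delta$ (each of size $O(\deg(\Delta)q^{-n/2})$), are precisely what is packaged into the quantities $\widetilde{\mathcal{D}}_1(n)$ and $\widetilde{\mathcal{D}}_2(n)$ defined in \eqref{CubicDForm1} and \eqref{CubicDform2}; the overall bound $\widetilde{\mathcal{D}}_1(n), \widetilde{\mathcal{D}}_2(n) \ll 1$ follows from $|\lambda| \leq \sqrt{2}$ (say) and the prime-counting bounds at the relevant degrees. All residual $D$-varying contributions across $k=1,\dots,7$ reduce to averages of the form $\bigl\langle \chi^j\bigr\rangle_{\F_N(B)}$ with $j \not\equiv 0 \bmod 3$, which are controlled by the cubic character-sum machinery already driving Theorem \ref{CubicOLD} and contribute within the stated error. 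The main obstacle I anticipate is the bookkeeping required to verify that the constant contributions from $k \in \{4,\dots,7\}$ and the bad primes combine exactly into the announced forms of $\widetilde{\mathcal{D}}_1(n)$ and $\widetilde{\mathcal{D}}_2(n)$, and to confirm that the aggregated character-sum errors from all $k$ remain below the claimed overall bound.
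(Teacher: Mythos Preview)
Your overall outline for $k=1,2,3$ is right, but there are two genuine gaps concerning $\widetilde{\mathcal{D}}_1(n),\widetilde{\mathcal{D}}_2(n)$ and the range $k\in\{4,\dots,7\}$.

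First, $\widetilde{\mathcal{D}}_1(n)$ and $\widetilde{\mathcal{D}}_2(n)$ are \emph{not} built from $k\in\{4,\dots,7\}$ or from bad primes. Look at \eqref{CubicDForm1} and \eqref{CubicDform2}: they are sums over primes of degree $n/2$ and $n/3$ respectively, weighted by an $N$-dependent geometric-type series. They arise from the following subtlety in your $k=2,3$ step. When you write the character factor as $\chi^{2a+b}$ and call the piece with $2a+b\equiv 0\bmod 3$ ``$D$-independent'', you are implicitly using $\chi^3=1$. But $\chi^3$ is the principal character modulo $P$, so this equals $1_{P\nmid D}$, and its average over $\F_N(B)$ is $|\F_N(PB)|/|\F_N(B)|$, not $1$. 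The paper computes this ratio in Proposition~\ref{CubicCoprimeProp}, obtaining $1+\sum_{a\ge 1}(-2/q^m)^a(1-am/N)+O((Nq^m)^{-1})$; the $a\ge 1$ portion of that expansion, multiplied against the $k=2$ and $k=3$ main terms, is exactly what defines $\widetilde{\mathcal{D}}_1(n)$ and $\widetilde{\mathcal{D}}_2(n)$, and this is why they are $\ll 1$ and why they depend on $N$. Your proposal omits this coprimality computation entirely.

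Second, your treatment of $k\in\{4,\dots,7\}$ cannot produce the claimed error $O(q^{-3n/8})$, nor quantities that are $\ll 1$ after dividing by $q^{-n/2}$. Take $k=4$: the $D$-invariant part of $a^*_{1,P^4}(\widetilde{E}_D)$ is $6|\lambda_P|^4-8|\lambda_P|^2+2$, and summing this over primes of degree $n/4$ gives something of size $q^{n/4}$ with no visible cancellation, hence a contribution of order $q^{-n/4}$ to the trace average---far larger than $q^{-3n/8}$. The paper avoids this by applying Lemma~\ref{TruncTrForm} with $m=3$ to each individual curve $\widetilde{E}_D$ \emph{before} averaging and \emph{before} any $\lambda$-decomposition: the point is that $\sum_{\deg P=n/d}a^*_{1,P^d}(\widetilde{E}_D)$ has square-root cancellation for every $D$ via Lemma~\ref{RHbound} (ultimately the Riemann Hypothesis for $L(u,sym^d\widetilde{E}_D)$), uniformly bounding all $d\ge 4$ by $O(q^{n/8})$. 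Once you split into $\lambda$-monomials and separate by character value, that cancellation is destroyed. So for $d\ge 4$ you should truncate first as in \eqref{CubicTrForm2}, not decompose.
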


\begin{rem}\label{Remark:tertiarymainterm}
Note that the presence of the terms $q^{-3n/8}$ and $n\tau(n)q^{-n/2}$ in the error term means that we could absorb the tertiary main term $(\widetilde{\mathcal{D}}_1(n)+\widetilde{\mathcal{D}}_2(n))q^{-n/2}$ into the error term. However, these error terms come from trivially bounding primes of degree at most $\frac{n}{4}$ and the primes of bad reduction, respectively, while $\widetilde{\mathcal{D}}_1(n)$ is written as a sum over primes of degree $\frac{n}{2}$ and $\widetilde{\mathcal{D}}_2(n)$ is written as a sum over primes of degree $\frac{n}{3}$. Therefore, if one is careful, one would (at least for small $n$) be able to remove these error terms and make the tertiary main term a sum over all primes of degree dividing $n$ and a real main term. This process would involve extending Lemma \ref{atolambda}. While not difficult, it would result in a less clean statement of Theorem \ref{CubicLowTerm}. Note also that this new tertiary main term may no longer be bounded by $q^{-n/2}$.
\end{rem}

It is not clear that Theorem \ref{CubicLowTerm} gives us what we were hoping for. That is, a term of size $q^{-n/3}$ that is related to the Frobenius of $L(u,sym^3\widetilde{E})$ in some way which would lead to a deviation term for the one-level density that involves the logarithmic derivative of $L(u,sym^3\widetilde{E})$. However, using some heuristic arguments, we can see a way that this appears.

\subsection{Heuristics and conjectures}\label{Intro-Heur-Conj}

Using the Weil bound and comparing Theorems \ref{CubicOLD} and \ref{CubicLowTerm}, we get the following immediate corollary:

\begin{cor}\label{maincor}
For any $m\in\Z_{\geq1}$, we have
$$\frac{m}{q^m} \sum_{\deg(P)=m} |\lambda_P|^2 = \frac{1}{2} + O\left(\frac{1}{q^{m/3}}\right).$$
\end{cor}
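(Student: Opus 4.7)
The strategy is to compare the two asymptotic formulas for $\langle \Tr(\Theta^n_{\widetilde{E}_D}) \rangle_{\F_N(B)}$ given by Theorems \ref{CubicOLD} and \ref{CubicLowTerm} at $n=2m$. This choice makes $\eta_2(n)=1$ in Theorem \ref{CubicOLD} and places the sum over primes of degree $m$ (which carries the $|\lambda_P|^2$ information) in the $n/2$ slot of Theorem \ref{CubicLowTerm}. I would first choose $N = N(m)$ large enough that the common error term $q^{n/2}e^{2n}/(Nq^{(1/2-\epsilon)N})$ is negligible, and then check that every other error term in both theorems, evaluated at $n=2m$, is $O(q^{-m/3})$. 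The controlling contribution is the $1/q^{n/3}$ term from Theorem \ref{CubicOLD} evaluated at $n=2m$; all of $\eta_2(n)n/q^{n/4}$, $n(\deg(\Delta)+\tau(n))/q^{n/2}$, $1/q^{3n/8}$ and $1/(Nq^{n/2})$ are smaller, and $(\widetilde{\mathcal{D}}_1(2m)+\widetilde{\mathcal{D}}_2(2m))/q^m = O(q^{-m})$ by the bound on $\widetilde{\mathcal{D}}_j$ stated in Theorem \ref{CubicLowTerm}.

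Equating the right-hand sides of the two theorems at $n = 2m$ and rearranging then yields
\begin{equation*}
\frac{2m}{q^m}\sum_{\deg(P) = m}|\lambda_P|^2 \;=\; \frac{2m}{q^m}\#\{P:\deg(P)=m\} \;-\; 1 \;+\; \frac{2m/3}{q^m}\sum_{\deg(P)=2m/3}\bigl(\lambda_P^3 + \overline{\lambda}_P^3\bigr) \;+\; O\!\left(\frac{1}{q^{m/3}}\right),
\end{equation*}
where the cubic sum is interpreted as empty when $3\nmid m$. The prime polynomial theorem in the form $\#\{P:\deg(P)=m\} = q^m/m + O(q^{m/2}/m)$ handles the first term on the right, giving $\frac{2m}{q^m}\#\{P:\deg(P)=m\} = 2 + O(q^{-m/2})$.

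To control the cubic sum, I would apply the Weil bound for character sums: $\lambda_P$ is the normalized character sum of the cubic residue symbol modulo $P$ against the degree-$2$ polynomial $F^2 - B$, which is not of the form $c\cdot G^3$, so $|\lambda_P| \leq 1$. Consequently $|\lambda_P^3 + \overline{\lambda}_P^3| \leq 2$, and combined with $\#\{P:\deg(P)=2m/3\} = O(q^{2m/3}/m)$ the cubic sum is $O(q^{-m/3})$. Substituting back and dividing by $2$ yields the stated estimate. The entire argument is essentially a subtraction of the two theorems combined with standard inputs, and the only obstacle is the bookkeeping verification at $n=2m$ that each error term in the two theorems is indeed dominated by $q^{-m/3}$ once $N$ has been chosen large in terms of $m$.
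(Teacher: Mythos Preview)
Your proof is correct and follows essentially the same route as the paper: equate Theorems~\ref{CubicOLD} and~\ref{CubicLowTerm} at $n=2m$ with $N$ taken large, bound the cubic sum via the Weil bound $|\lambda_P|\le 1$, and apply the Prime Polynomial Theorem. One small bookkeeping slip: at $n=2m$ the term $\eta_2(n)n/q^{n/4}=2m/q^{m/2}$ is actually \emph{larger} than $1/q^{n/3}=1/q^{2m/3}$, not smaller, but since $2m/q^{m/2}=O(q^{-m/3})$ anyway this does not affect the argument.
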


Heuristically, replacing $|\lambda_P|^2$ with its average of $\frac{1}{2}$, we can show that 
\begin{align*}
    \frac{n/3}{q^{n/2}}\sum_{\deg(P)=\frac n3} \big(\lambda_P^3+ \overline{\lambda}_P^3\big) \approx \frac{\eta_3(n)}{q^{n/3}} \left( \Tr\big(\Theta^{n/3}_{sym^3 \widetilde{E}}\big) +\frac{1}{2} \Tr\big(\Theta^{n/3}_{\widetilde{E}}\big)\right)
\end{align*}
(see Section \ref{Section:heuristics}), where
$$\eta_3(n) := \begin{cases} 1 & 3|n, \\ 0 & 3\nmid n. \end{cases}$$
This naturally leads to the following conjecture:

\begin{conj}\label{CubicConj}
Let $\widetilde{E}$ be an elliptic curve defined over $\Ff_q(T)$ and given by the minimal Weierstrass equation $y^2 = x^3+B$, where $B\in \Ff_q[T]$. Then, for any $n\in\Z_{\geq1}$, we have  
\begin{align*}
\big\langle\Tr(\Theta^n_{\widetilde{E}_D})\big\rangle_{\F_{N}(B)} = \eta_2(n) + \frac{\eta_3(n)}{q^{n/3}}\left( \Tr\big(\Theta^{n/3}_{sym^3 \widetilde{E}}\big) +\frac{1}{2} \Tr\big(\Theta^{n/3}_{\widetilde{E}}\big)\right) 
+\frac{\widetilde{\mathcal{D}}(n)}{q^{n/2}}\big(1+o(1)\big),
\end{align*}
where $\widetilde{\mathcal{D}}(n)$ can be written as a sum over primes of degree dividing $n$ and is bounded by (and might be considerably smaller than) $q^{n/8}$.
\end{conj}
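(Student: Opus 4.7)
The plan is to start from Theorem \ref{CubicLowTerm} and evaluate each of its two prime sums in closed form to match the conjectured main terms. The first sum $-\frac{n/2}{q^{n/2}}\sum_{\deg(P)=n/2}2(|\lambda_P|^2-1)$ resolves immediately: Corollary \ref{maincor} with $m=n/2$ gives $\frac{m}{q^m}\sum_{\deg(P)=m}|\lambda_P|^2 = \frac{1}{2}+O(q^{-m/3})$, while the prime polynomial theorem gives $\frac{m}{q^m}\sum_{\deg(P)=m}1 = 1+O(q^{-m/2})$, so this sum equals $-2\cdot\tfrac{1}{2}+2\cdot 1 + O(q^{-n/6}) = \eta_2(n)+O(q^{-n/6})$. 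The harder task is the second sum $\frac{n/3}{q^{n/2}}\sum_{\deg(P)=n/3}(\lambda_P^3+\overline{\lambda}_P^3)$. Newton's identity together with $\lambda_P+\overline{\lambda}_P=-a_P$ and $\lambda_P\overline{\lambda}_P=|\lambda_P|^2$ yields $\lambda_P^3+\overline{\lambda}_P^3 = -a_P^3+3a_P|\lambda_P|^2$. Combined with the explicit formula at the dominant $k=1$ level, $\Tr(\Theta^m_{\widetilde{E}})\approx -\frac{m}{q^{m/2}}\sum_{\deg(P)=m}a_P$ and $\Tr(\Theta^m_{sym^3\widetilde{E}})\approx -\frac{m}{q^{m/2}}\sum_{\deg(P)=m}(a_P^3-2a_P)$ (using $a_P(sym^3\widetilde{E})=a_P^3-2a_P$ in the normalisation $\alpha_P\overline{\alpha}_P=1$), matching the conjectured form reduces to the weighted decorrelation
\begin{equation*}
\sum_{\deg(P)=n/3}a_P\Big(|\lambda_P|^2-\tfrac{1}{2}\Big) \ll q^{n/8}/n.
\end{equation*}

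This weighted decorrelation is the principal obstacle. Inserting $|a_P|\leq 2$ into the unweighted bound from Corollary \ref{maincor} only yields $O(q^{2n/9}/n)$, which is too weak since $2n/9>n/8$. Obtaining the sharper estimate requires exploiting the special structure of $\widetilde{E}$, most naturally its complex multiplication by $\Z[\omega]$: the $L$-function factors $L(u,\widetilde{E}) = L(u,\psi)L(u,\overline{\psi})$ for a Hecke Gr\"ossencharacter $\psi$, and one would express $\lambda_P$ in terms of the values of $\psi$ and $\overline{\psi}$ at the primes above $P$ in the CM extension, so that orthogonality produces genuine cancellation in $\sum a_P|\lambda_P|^2$. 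A more hands-on variant is to open
\begin{equation*}
|\lambda_P|^2 = \frac{1}{|P|}\sum_{F_1,F_2\bmod P}\Big(\tfrac{(F_1^2-B)(F_2^2-B)^2}{P}\Big)_3
\end{equation*}
and to estimate $\sum_{\deg(P)=n/3}a_P\,\big((F_1^2-B)(F_2^2-B)^2/P\big)_3$ by Weil--Deligne bounds uniformly in $F_1,F_2$, using that $a_P$ itself is a Legendre character sum of $x^3+B$.

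Once the decorrelation is in place, the tertiary main term $\widetilde{\mathcal{D}}(n)q^{-n/2}(1+o(1))$ is defined to collect the remaining pieces: the bounded contributions $\widetilde{\mathcal{D}}_1(n)+\widetilde{\mathcal{D}}_2(n)$ from Theorem \ref{CubicLowTerm}, the extensions of Lemma \ref{atolambda} indicated in Remark \ref{Remark:tertiarymainterm} (which convert the error terms $q^{-3n/8}$ and $n\tau(n)q^{-n/2}$ into genuine sums over primes of degree dividing $n$), and the remainder from the decorrelation step. The structural claim that $\widetilde{\mathcal{D}}(n)$ is a sum over primes of degree dividing $n$ then falls out of this assembly, and the bound $\widetilde{\mathcal{D}}(n)\ll q^{n/8}$ reflects the target strength of the decorrelation; the possibility that $\widetilde{\mathcal{D}}(n)$ be considerably smaller would follow from direct cancellation in the three-variable cubic character sum underlying $\lambda_P^3$ via Deligne's bounds on the cohomology of the associated variety.
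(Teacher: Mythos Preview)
The statement under consideration is a \emph{conjecture}, not a theorem; the paper does not prove it. In Section~\ref{Section:heuristics} the authors give only a heuristic argument: starting from Theorem~\ref{CubicLowTerm}, they rewrite (for primes of good reduction)
\[
a^*_{3,P} = -(\lambda_P^3+\overline{\lambda}_P^3) + (3|\lambda_P|^2-2)\,a^*_{1,P},
\]
and then \emph{heuristically replace} $|\lambda_P|^2$ by its average value $\tfrac12$ (Corollary~\ref{maincor}), obtaining $\lambda_P^3+\overline{\lambda}_P^3 \approx -a^*_{3,P}-\tfrac12 a^*_{1,P}$ and hence the conjectured trace expression via \eqref{SymmEllTraceForm}. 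No attempt is made to justify the replacement rigorously.

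Your proposal follows exactly the same route---your Newton identity $\lambda_P^3+\overline{\lambda}_P^3 = -a_P^3+3a_P|\lambda_P|^2$ is algebraically equivalent to the paper's formula---and you correctly isolate the one genuine obstruction: the weighted decorrelation
\[
\sum_{\deg(P)=n/3} a_P\Big(|\lambda_P|^2-\tfrac12\Big)\ \ll\ \frac{q^{n/8}}{n}.
\]
This is precisely the step the paper does not establish and is the reason the statement is labeled a conjecture. Your suggested lines of attack (CM factorisation of $L(u,\widetilde{E})$, or opening $|\lambda_P|^2$ as a double character sum and invoking Weil--Deligne uniformly in $F_1,F_2$) are reasonable speculations, but you do not carry either of them out; in particular, the uniform Weil--Deligne bound would have to beat the trivial count of $q^{2n/3}$ pairs $(F_1,F_2)$ by a substantial power, and no mechanism for this is supplied. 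A minor point: the ``$O(q^{2n/9}/n)$'' you quote from inserting $|a_P|\le 2$ into Corollary~\ref{maincor} is not actually available---the corollary bounds $\sum(|\lambda_P|^2-\tfrac12)$, not $\sum\big||\lambda_P|^2-\tfrac12\big|$, so the triangle inequality does not give what you claim.

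In short: your proposal reproduces the paper's heuristic and correctly names the gap, but it is not a proof, and neither is anything in the paper. The conjecture remains open.
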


Therefore, passing to the one-level density, we get an idea of what kind of deviation term we could expect in cubic twist families. As this relies on the conjecture and the proof would be essentially the same as the one of Corollary \ref{QuadCor}, we state only that the deviation term should contain the two terms
$$-\widehat{f}(0)\frac{L'(q^{-3/2},sym^3\widetilde{E})}{q^{3/2}L(q^{-3/2},sym^3\widetilde{E})} \quad \quad \mbox{ and } \quad \quad -\widehat{f}(0) \frac{L'(q^{-3/2},\widetilde{E})}{2q^{3/2}L(q^{-3/2},\widetilde{E})},$$
as well as a third term which can be expressed as a sum over primes and will depend on $\widetilde{\mathcal{D}}(n)$.

\subsection*{Outline of the paper}

In Section \ref{Section:symmetric}, we define the symmetric power $L$-functions and prove several relations that will be needed throughout the later sections. In Section \ref{Section:quadratic}, we briefly discuss the quadratic twist family as the majority of the work in getting a reasonable error term is already done in \cite{C-L}. In Section \ref{Section:cubic}, we consider the cubic twist family and prove Theorems \ref{CubicOLD} and \ref{CubicLowTerm}. In Section \ref{Section:heuristics}, we give a heuristic argument for Conjecture \ref{CubicConj}. Finally, in Appendix \ref{Section:comments} we discuss the choices we made in choosing our families and indicate how one could extend our work to ``fuller" families.

\subsection*{Acknowledgments}

We thank Daniel Fiorilli and Ze\'{e}v Rudnick for helpful comments on an early version of this paper. We also thank Lucile Devin for pointing our attention to Lumley’s paper \cite{Lumley}.

\section{Symmetric power $L$-functions}\label{Section:symmetric}

Let $E$ be any elliptic curve defined over $\Ff_q(T)$. Recall the definition of the $L$-function attached to $E$
\begin{align*}
L(u,E) := \prod_{P|\Delta} \left( 1- a_P(E)u^{\deg(P)} \right)^{-1} \prod_{P\nmid \Delta} \left( 1 - a_P(E)u^{\deg(P)} + u^{2\deg(P)} \right)^{-1},
\end{align*}
which converge for $|u|<q^{-1}$. If $P\nmid \Delta$, then we define $\alpha_P$ and $\beta_P$ such that
\begin{align}\label{alphaDef}
    1-a_Pu+u^2 = (1-\alpha_Pu)(1-\beta_Pu).
\end{align}
In addition, when $P|\Delta$, we set $\alpha_P=a_P$ and $\beta_P=0$.
In other words, we define $\alpha_P$ and $\beta_P$ such that the inverse of the Euler factor at $P$ equals
$$L_P(u,E) = \big(1-\alpha_Pu^{\deg(P)}\big)\big(1-\beta_Pu^{\deg(P)}\big),$$
where $\beta_P=0$ if $E$ has bad reduction at $P$.

For $m$ a positive integer, we define
\begin{align}\label{SymmPowDef}
L(u,sym^mE) := \prod_{P\nmid \Delta} \prod_{i=0}^m \left(1-\alpha_P^i\beta_P^{m-i}u^{\deg(P)}\right)^{-1} \prod_{P|\Delta} L_P\big(u^{\deg(P)},sym^mE\big)^{-1},
\end{align}
where $L_P(u,sym^mE)$ is a polynomial of degree at most $m+1$ with bounded coefficients.\footnote{In fact, if $P$ has multiplicative reduction, then $L_P(u,sym^mE) = (1-\alpha_P^mu)$. However, if $P$ has additive reduction, then the situation is more complicated.} We refer the reader to \cite[Section 1.2]{CFJ} for more information on symmetric power $L$-functions, and the references therein (specifically \cite{Deligne2} and \cite{Ulmer-geometric}) for more general statements and proofs. See also \cite{Martin-Watkins} for symmetric power $L$-functions of elliptic curves defined over $\mathbb{Q}$.

\begin{prop}[Parts of Theorem 1.1 of \cite{CFJ}]\label{RHProp}
For any elliptic curve $E$ defined over $\Ff_q(T)$ and any positive integer $m$, $L(u,sym^mE)$ is a polynomial of degree $\mathfrak{n}_m$ all of whose roots have norm $q^{-1/2}$. Hence, we can find a matrix $\Theta_{sym^mE} \in \mathrm{U}(\mathfrak{n}_m)$ such that
$$L(u,sym^mE) = \det\big(1-\sqrt{q}u\Theta_{sym^mE}\big).$$
\end{prop}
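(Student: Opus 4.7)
The statement is cited as part of Theorem 1.1 of \cite{CFJ}, so the plan is to follow the cohomological approach used there. The three things to establish are (i) polynomiality of $L(u,sym^mE)$, (ii) its degree being the predicted $\mathfrak{n}_m$, and (iii) the roots all having absolute value $q^{-1/2}$. Everything ultimately reduces to Deligne's purity theorem (Weil II) together with the Grothendieck--Lefschetz trace formula applied to an appropriate $\ell$-adic sheaf on $\mathbb{P}^1_{\Ff_q}$.

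First I would construct the relevant sheaf. Fix a prime $\ell\neq p$, let $U\subset \mathbb{P}^1_{\Ff_q}$ be the open locus of good reduction of $E$, and set $\mathcal{F}_E := R^1\pi_*\mathbb{Q}_\ell$ for the smooth projective family $\pi\colon \mathcal{E}\to U$ extracted from $E$. This sheaf is lisse of rank $2$ on $U$, pure of weight $1$, with (suitably normalized) Frobenius eigenvalues $\alpha_P,\beta_P$ at every $P\in U$. Then $sym^m\mathcal{F}_E$ is lisse of rank $m+1$ with Frobenius eigenvalues $\alpha_P^i\beta_P^{m-i}$ for $0\leq i\leq m$, matching the good-reduction Euler factors in \eqref{SymmPowDef}. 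Extending to $\mathbb{P}^1$ via the middle extension $j_{*}\,sym^m\mathcal{F}_E$ along $j\colon U\hookrightarrow \mathbb{P}^1$ is designed so that the local factors at the bad primes $P\mid\Delta$ come from the inertia invariants $(sym^m\mathcal{F}_E)^{I_P}$; a standard computation identifies these with the polynomials $L_P(u,sym^mE)^{-1}$ appearing in \eqref{SymmPowDef}.

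Next, the Grothendieck--Lefschetz trace formula yields
$$L(u,sym^mE) = \prod_{i=0}^{2} \det\bigl(1 - q^{-m/2} u\,\Frob \mid H^i(\mathbb{P}^1_{\overline{\Ff_q}},\, j_{*}\,sym^m\mathcal{F}_E)\bigr)^{(-1)^{i+1}}.$$
An irreducibility/monodromy argument (using that the geometric monodromy of $\mathcal{F}_E$ is large; cf.~\cite{CFJ} or the discussion in \cite{Ulmer-geometric}) forces $H^0 = H^2 = 0$ for $m\geq 1$, so $L(u,sym^mE)$ equals the reciprocal characteristic polynomial of $\Frob$ acting on $H^1$ and is therefore a polynomial. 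Its degree $\mathfrak{n}_m = \dim H^1$ is then computable via the Euler--Poincar\'e formula of Grothendieck--Ogg--Shafarevich from the generic rank $m+1$ and the local drops at the bad primes.

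Finally, since $j_{*}\,sym^m\mathcal{F}_E$ is pure of weight $m$, Deligne's purity theorem (Weil II) gives that the Frobenius eigenvalues on $H^1$ have absolute value $q^{(m+1)/2}$; after the Tate normalization built into \eqref{SymmPowDef}, this translates into all zeros of $L(u,sym^mE)$ lying on $|u|=q^{-1/2}$, and the matrix $\Theta_{sym^mE}\in \mathrm{U}(\mathfrak{n}_m)$ is then read off directly from the Frobenius action on $H^1$. The main obstacle throughout is the vanishing of $H^0$ and $H^2$: this is where hypotheses on the geometric monodromy of $\mathcal{F}_E$ enter, and where the detailed work of \cite{CFJ} is really needed; the isotrivial case in particular must be handled separately. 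Once that vanishing is secured, purity and Lefschetz deliver polynomiality, the predicted degree, and the location of the zeros essentially automatically.
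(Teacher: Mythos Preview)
The paper does not give a proof of this proposition at all: it is stated with the attribution ``Parts of Theorem 1.1 of \cite{CFJ}'' and used as a black box, with the reader referred to \cite{CFJ} (and in turn to \cite{Deligne2} and \cite{Ulmer-geometric}) for the argument. Your sketch is a faithful outline of the cohomological proof that underlies that citation---construct $j_*\,sym^m\mathcal{F}_E$, apply Grothendieck--Lefschetz, kill $H^0$ and $H^2$ via the size of the geometric monodromy, and invoke Deligne's purity for the location of the zeros---so there is nothing to compare: you have supplied what the paper deliberately omits. Your caveat about the vanishing of $H^0$ and $H^2$ being the genuine input is well placed; note that in \cite{CFJ} this is handled under a non-isotriviality hypothesis on $E$, which the present paper leaves implicit.
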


\begin{rem}\label{Remark:CFJ}
I follows from \cite[Lemma 2.1]{CFJ-PNR} that $\mathfrak{n}_m\ll m$ for all positive integers $m$, where the implied constant depends on $E$.
\end{rem}

It will be useful to have notation for $L(u,sym^mE)$ also when $m=0$ and $-1$. Therefore, we define
$$L(u,sym^0E) := \zeta_q(u) \quad \quad \mbox{ and } \quad \quad L(u,sym^{-1}E) := 1,$$
where $\zeta_q(u)$ is the usual zeta function of $\Ff_q[T]$ defined as
$$\zeta_q(u) := \sum_{F \,\,\text{monic}} u^{\deg(F)} = \frac{1}{1-qu}.$$

\subsection{A symmetric power trace formula}

We define $a^*_{m,P^k}=a^*_{m,P^k}(E)$ such that
\begin{align}\label{amPkDef}
    \frac{L'(u,sym^mE)}{L(u,sym^mE)} = \frac{1}{u}\sum_P \deg(P) \sum_{k=1}^{\infty} a^*_{m,P^k} u^{k\deg(P)}.
\end{align}
If $m$ is a positive integer, then we can use \eqref{amPkDef} together with Proposition \ref{RHProp} to get a formula for the trace of the Frobenius element:
\begin{align}\label{SymmEllTraceForm}
    -q^{n/2}\Tr(\Theta^n_{sym^mE}) = \sum_{d|n}\frac{n}{d} \sum_{\deg(P)=\frac{n}{d}} a^*_{m,P^d}.
\end{align}
If $m$ is not a positive integer, then we see that 
\begin{align}\label{a0PkDef}
    a^*_{0,P^k}=1 \mbox{ and } a^*_{-1,P^k}=0
\end{align}
for all $P$ and $k$. Thus, even though there is no Frobenius matrix associated with $m=0,-1$, we can still determine their respective sums in the right-hand side of \eqref{SymmEllTraceForm}:
\begin{align}\label{TraceFormZero}
    \sum_{d|n} \frac{n}{d}\sum_{\deg(P)=\frac{n}{d}} a^*_{m,P^d} = \begin{cases} q^n & m=0, \\ 0 & m=-1.\end{cases}
\end{align}

\subsection{Useful lemmas}

Combining equations \eqref{alphaDef},\eqref{SymmPowDef} and \eqref{amPkDef}, we see that if $m$ is a positive integer and $P$ is a prime of good reduction, then
\begin{align}\label{amPkform}
    a^*_{m,P^k} = \eta_2(m) + \sum_{j=0}^{\lfloor\frac{m-1}{2}\rfloor} \left(\alpha_P^{k(m-2j)}+\beta_P^{k(m-2j)}\right).
\end{align}
Further, for any prime $P$, we have the bound 
\begin{align}\label{amPkBound}
|a^*_{m,P^k}| \leq 2(m+1).
\end{align}
Note that by \eqref{a0PkDef}, we see that \eqref{amPkform} and \eqref{amPkBound} also hold for $m=0,-1$.

We can now use \eqref{amPkform} to relate the coefficients of the logarithmic derivatives of different symmetric power $L$-functions.

\begin{lem}\label{PowerReduce}
If $P$ is a prime of good reduction and $m$ is a positive integer, then 
$$a^*_{1,P^{md}} = a^*_{m,P^d} - a^*_{m-2,P^d}.$$
\end{lem}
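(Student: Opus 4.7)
The plan is to prove this lemma by direct computation from the explicit formula \eqref{amPkform} for $a^*_{m,P^k}$, combined with the boundary conditions \eqref{a0PkDef}. Since $P$ has good reduction, both sides of the claimed identity can be expressed purely in terms of the Frobenius eigenvalues $\alpha_P, \beta_P$.

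First, I would apply \eqref{amPkform} to the left-hand side. Since the $\eta_2(1) = 0$ and $\lfloor 0/2 \rfloor = 0$, we immediately get $a^*_{1,P^{md}} = \alpha_P^{md} + \beta_P^{md}$. Next, I would expand $a^*_{m,P^d}$ and $a^*_{m-2,P^d}$ via \eqref{amPkform} and observe that the inner sums telescope: the exponents $m-2j$ appearing in $a^*_{m,P^d}$ are $m, m-2, \ldots$, while the exponents $m-2-2j$ appearing in $a^*_{m-2,P^d}$ are exactly the same list shifted by one, namely $m-2, m-4, \ldots$. Since $\eta_2(m) = \eta_2(m-2)$ whenever $m \geq 3$, the constants and all but the leading term cancel, leaving precisely $\alpha_P^{dm} + \beta_P^{dm}$, which matches the left-hand side.

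The remaining issue is the boundary cases $m=1$ and $m=2$, where the symbol $a^*_{m-2,P^d}$ involves the non-positive values $m-2 = -1, 0$ defined by convention in \eqref{a0PkDef}. For $m=1$, $a^*_{-1,P^d} = 0$ and $a^*_{1,P^d} = \alpha_P^d + \beta_P^d$, giving the identity trivially. For $m=2$, $\eta_2(2) = 1$ and $a^*_{2,P^d} = 1 + \alpha_P^{2d} + \beta_P^{2d}$, while $a^*_{0,P^d} = 1$; the difference is $\alpha_P^{2d} + \beta_P^{2d} = a^*_{1,P^{2d}}$. Thus the convention \eqref{a0PkDef} is precisely what is needed to make the telescoping argument go through uniformly for all $m \geq 1$.

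No real obstacle is expected, since the statement reduces to an identity between symmetric polynomials in $\alpha_P$ and $\beta_P$ that is visible after writing out a few terms. The only bookkeeping subtlety is to verify that the parity/floor computations in the range of $j$ are compatible as $m$ changes to $m-2$, and to correctly invoke \eqref{a0PkDef} at the boundary. Writing $a^*_{m,P^d} - a^*_{m-2,P^d}$ as a telescoping difference of sums indexed by the same parity class makes this completely transparent.
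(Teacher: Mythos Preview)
Your proposal is correct and follows essentially the same telescoping argument as the paper: expand both sides via \eqref{amPkform} and observe that all terms cancel except $\alpha_P^{dm}+\beta_P^{dm}$. The only cosmetic difference is that the paper treats the boundary cases $m=1,2$ uniformly (having already noted that \eqref{amPkform} extends to $m=0,-1$ via \eqref{a0PkDef}), whereas you spell them out separately.
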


\begin{proof}
Applying \eqref{amPkform}, we get
\begin{align*}
    a^*_{m,P^d}-a^*_{m-2,P^d} & = \sum_{j=0}^{\lfloor\frac{m-1}{2}\rfloor} \left(\alpha_P^{d(m-2j)}+\beta_P^{d(m-2j)}\right) - \sum_{j=0}^{\lfloor\frac{m-3}{2}\rfloor} \left(\alpha_P^{d(m-2-2j)}+\beta_P^{d(m-2-2j)}\right) \\
    & = \alpha_P^{dm}+\beta_P^{dm} \\ 
    & = a^*_{1,P^{md}},
\end{align*}
which is the desired result. 
\end{proof}

Next, we are able to use Lemma \ref{PowerReduce} to obtain a nice formula that relates traces of different symmetric powers. But first, we need to introduce some notation that will be useful in order to optimize the contribution of the primes of bad reduction to our error terms. For any $D\in\Ff_q[T]$, and any $n>0$, we denote
\begin{align}\label{D_n}
    D_n = \prod_{\substack{P|D \\ \deg(P)=n}} P.
\end{align}

\begin{lem}\label{NiceTraceFormula}
Let $E$ be any elliptic curve defined over $\Ff_q(T)$. 
\begin{enumerate}
\item Let $m|n$. Then, if $m\geq 3$, we have
$$\sum_{\substack{d|n \\ m|d }} \frac{n}{d} \sum_{\deg(P)=\frac{n}{d}} a^*_{1,P^d} = -q^{\frac{n}{2m}} \left(\Tr\big(\Theta^{n/m}_{sym^mE}\big)  - \Tr\big(\Theta^{n/m}_{sym^{m-2}E}\big) \right)+ O\bigg(m \sum_{d|\frac{n}{m}} \deg(\Delta_{n/dm})  \bigg).$$
\item If $2|n$, then
$$\sum_{\substack{d|n \\ 2|d }} \frac{n}{d} \sum_{\deg(P)=\frac{n}{d}} a^*_{1,P^d} = -q^{n/2}-q^{n/4} \Tr\big(\Theta^{n/2}_{sym^2E}\big) + \sum_{\substack{d|n \\ 2|d }}\frac{n}{d} \sum_{\substack{\deg(P)=\frac{n}{d} \\ P|\Delta }} \big(a^*_{1,P^{d}}-a^*_{2,P^{d/2}}+1 \big). $$
\end{enumerate}
\end{lem}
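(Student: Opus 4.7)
The plan is to derive both parts from a common framework built on Lemma \ref{PowerReduce} and the trace formula \eqref{SymmEllTraceForm}, treating primes of good and bad reduction separately. The key substitution is $d=mk$ with $k\mid n/m$, which turns each sum over divisors $d$ of $n$ divisible by $m$ into a sum over divisors $k$ of $n'=n/m$. After this reindexing, Lemma \ref{PowerReduce} replaces $a^*_{1,P^{mk}}$ by $a^*_{m,P^k}-a^*_{m-2,P^k}$ at every prime of good reduction, and the trace formula \eqref{SymmEllTraceForm} (or \eqref{TraceFormZero} when $m=2$) converts the resulting complete sums over all primes into traces of powers of the symmetric power Frobenius.

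For part (1), after the reindexing, I would write
\begin{equation*}
\sum_{\substack{d\mid n\\ m\mid d}}\frac{n}{d}\sum_{\deg(P)=n/d} a^*_{1,P^d}
=\sum_{k\mid n'}\frac{n'}{k}\sum_{\substack{\deg(P)=n'/k\\ P\nmid\Delta}}\bigl(a^*_{m,P^k}-a^*_{m-2,P^k}\bigr)
+\sum_{k\mid n'}\frac{n'}{k}\sum_{\substack{\deg(P)=n'/k\\ P\mid\Delta}} a^*_{1,P^{mk}},
\end{equation*}
then add and subtract the bad-reduction contribution to the first sum to complete it into a full sum over all primes. By \eqref{SymmEllTraceForm} (valid since $m\geq 3$ gives $m-2\geq 1$), that completed sum equals $-q^{n'/2}(\Tr(\Theta^{n'}_{sym^mE})-\Tr(\Theta^{n'}_{sym^{m-2}E}))$. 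The remaining pieces are all sums over primes $P\mid\Delta$ of degree $n'/k$, whose number is $\deg(\Delta_{n'/k})/(n'/k)$ by \eqref{D_n}. Together with the uniform bound $|a^*_{j,P^k}|\leq 2(m+1)$ from \eqref{amPkBound}, this produces the error term $O(m\sum_{d\mid n/m}\deg(\Delta_{n/(dm)}))$ after renaming $k\mapsto d$.

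Part (2) is the same maneuver with $m=2$, except that now $m-2=0$ and the contribution of $a^*_{0,P^k}$ must be evaluated using \eqref{TraceFormZero} instead of \eqref{SymmEllTraceForm}: Lemma \ref{PowerReduce} gives $a^*_{1,P^{2k}}=a^*_{2,P^k}-1$ at good primes, and completing the sum produces
\begin{equation*}
\sum_{k\mid n/2}\frac{n/2}{k}\sum_{\deg(P)=n/(2k)}\bigl(a^*_{2,P^k}-1\bigr)=-q^{n/4}\Tr(\Theta^{n/2}_{sym^2E})-q^{n/2}.
\end{equation*}
Because the statement in part (2) is an identity with no error term, one does not bound the bad-reduction remainder but simply leaves it as the explicit sum $\sum_{2\mid d\mid n}\tfrac{n}{d}\sum_{\deg(P)=n/d,\,P\mid\Delta}(a^*_{1,P^d}-a^*_{2,P^{d/2}}+1)$, with the $+1$ originating from the $-a^*_{0,P^k}$ that was subtracted to complete the good-prime sum.

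The only step requiring any care is the bookkeeping of which terms are pushed into the error (part (1)) versus kept as an exact remainder (part (2)), and making sure the factor $n/d=n'/k$ times the count of bad primes of that degree collapses correctly to $\deg(\Delta_{n'/k})$. I do not anticipate any genuine obstacle here: both identities follow by arithmetic manipulation once Lemma \ref{PowerReduce}, the trace formula \eqref{SymmEllTraceForm}, the special values \eqref{TraceFormZero}, and the Hasse--Weil-type bound \eqref{amPkBound} are in place.
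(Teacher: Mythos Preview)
Your proposal is correct and follows essentially the same approach as the paper: the substitution $d=mk$, the split into good and bad primes, the application of Lemma~\ref{PowerReduce} at good primes, and the use of \eqref{SymmEllTraceForm} (respectively \eqref{TraceFormZero} when $m=2$) to identify the completed sums as traces are exactly what the paper does. The only cosmetic difference is that the paper bounds the two bad-prime contributions (from the original sum and from completing the good-prime sum) in two separate steps rather than combining them via an add-and-subtract, but the resulting error term is the same.
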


\begin{proof}
To prove $(1)$, we first observe that 
$$\sum_{\substack{d|n \\ m|d }} \frac{n}{d} \sum_{\deg(P)=\frac{n}{d}} a^*_{1,P^d} = \sum_{d|\frac{n}{m}} \frac{n}{dm} \sum_{\deg(P)=\frac{n}{dm}} a^*_{1,P^{md}}. $$
Splitting the sum over primes into primes of good and bad reduction, we find that the primes of bad reduction contribute
$$ \sum_{d|\frac{n}{m}} \frac{n}{dm} \sum_{\substack{\deg(P)=\frac{n}{dm} \\P|\Delta}} a^*_{1,P^{md}} \ll \sum_{d|\frac{n}{m}} \deg(\Delta_{n/dm}).$$
Now, when $m\geq 3$, for the primes of good reduction, we use Lemma \ref{PowerReduce} as well as \eqref{SymmEllTraceForm} to get
\begin{multline*}
 \sum_{d|\frac{n}{m}} \frac{n}{dm} \sum_{\substack{\deg(P)=\frac{n}{dm} \\ P\nmid \Delta }} a^*_{1,P^{md}} = \sum_{d|\frac{n}{m}} \frac{n}{dm} \sum_{\substack{\deg(P)=\frac{n}{dm} \\ P\nmid \Delta }} \left( a^*_{m,P^d} - a^*_{m-2,P^d}\right)\\
 =- q^{\frac{n}{2m}} \left(\Tr\big(\Theta^{n/m}_{sym^mE}\big) -\Tr\big(\Theta^{n/m}_{sym^{m-2}E}\big) \right) +O\bigg(m \sum_{d|\frac{n}{m}} \deg(\Delta_{n/dm})  \bigg),
\end{multline*}
where the error term again comes from the primes of bad reduction.

Finally, to prove $(2)$ we can still use Lemma \ref{PowerReduce} on the primes of good reduction. However, we also have to use \eqref{a0PkDef} and \eqref{TraceFormZero} in conjunction with \eqref{SymmEllTraceForm} to get
\begin{align*}
 \sum_{d|\frac{n}{2}} \frac{n}{2d} \sum_{\substack{\deg(P)=\frac{n}{2d} \\ P\nmid \Delta }} a^*_{1,P^{2d}} & = \sum_{d|\frac{n}{2}} \frac{n}{2d} \sum_{\substack{\deg(P)=\frac{n}{2d} \\ P\nmid \Delta}} \big( a^*_{2,P^d} - 1\big)\\
 & = -q^{n/2} - q^{n/4} \Tr\big(\Theta^{n/2}_{sym^2E}\big) - \sum_{\substack{d|n \\ 2|d }} \frac{n}{d} \sum_{\substack{\deg(P)=\frac{n}{d} \\ P|\Delta }} \big(a^*_{2,P^{d/2}}-1\big),
\end{align*}
and the result follows.
\end{proof}

We can now use these relations to bound sums of $a^*_{m,P^k}$ over primes $P$ of a fixed degree for various combinations of $m$ and $k$.

\begin{lem}\label{RHbound}
Let $E$ be any elliptic curve defined over $\Ff_q(T)$.
\begin{enumerate}
\item If $m$ is any positive integer, then
$$\sum_{\deg(P)=n} a^*_{m,P} \ll \frac{m}{n} q^{n/2}.$$
\item If $m\geq 3$, then 
$$\sum_{\deg(P)=n}a^*_{1,P^m}\ll \frac{m}{n}\big(q^{n/2} + \deg(\Delta_n)\big).$$
\item For prime squares, we have
$$\sum_{\deg(P)=n}a^*_{1,P^2}= -\frac{q^n}{n} +O\bigg(q^{n/2} + \frac{\deg(\Delta_n)}{n} \bigg).$$
\end{enumerate}
\end{lem}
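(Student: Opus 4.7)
The plan is to extract each statement from the symmetric-power trace formula \eqref{SymmEllTraceForm} by isolating the $d=1$ term. For part $(1)$, rewrite
$$n\sum_{\deg(P)=n}a^*_{m,P} = -q^{n/2}\Tr\big(\Theta^n_{sym^mE}\big) - \sum_{\substack{d\mid n \\ d\geq 2}}\frac{n}{d}\sum_{\deg(P)=n/d}a^*_{m,P^d}.$$
Proposition \ref{RHProp} combined with Remark \ref{Remark:CFJ} gives $|\Tr(\Theta^n_{sym^mE})|\leq \mathfrak{n}_m \ll m$, so the first term on the right is $\ll m\,q^{n/2}$. For the remaining sum, I would use the trivial bound $|a^*_{m,P^d}|\leq 2(m+1)$ from \eqref{amPkBound} together with the elementary estimate $\pi_q(k)\leq q^k/k$ (which follows from the identity $\sum_{e\mid k}e\,\pi_q(e)=q^k$), yielding
$$\sum_{\substack{d\mid n \\ d\geq 2}}\frac{n}{d}\sum_{\deg(P)=n/d}\big|a^*_{m,P^d}\big| \,\ll\, m\sum_{\substack{d\mid n \\ d\geq 2}}q^{n/d} \,\ll\, m\,q^{n/2},$$
since the $d=2$ contribution dominates. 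Dividing by $n$ gives $(1)$.

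For part $(2)$, I would invoke Lemma \ref{PowerReduce} with $d=1$ to get $a^*_{1,P^m} = a^*_{m,P} - a^*_{m-2,P}$ for all $P\nmid\Delta$, splitting the sum over $\deg(P)=n$ according to reduction type. Applying $(1)$ separately to $m$ and $m-2$ handles the good-reduction part with bound $\ll \tfrac{m}{n}q^{n/2}$. Since $\#\{P:\deg P=n,\,P\mid\Delta\}\leq \deg(\Delta_n)/n$ (this is the purpose of the notation \eqref{D_n}) and every relevant $a^*$ is $O(m)$ by \eqref{amPkBound}, the bad-reduction correction contributes $\ll \tfrac{m}{n}\deg(\Delta_n)$, giving $(2)$.

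For part $(3)$, I would take $m=2$ in Lemma \ref{PowerReduce} and combine with \eqref{a0PkDef} to obtain $a^*_{1,P^2} = a^*_{2,P} - 1$ for primes of good reduction. Summing the constant $-1$ over primes of good reduction of degree $n$ gives $-\pi_q(n) + O(\deg(\Delta_n)/n)$, and the function-field prime number theorem $\pi_q(n) = q^n/n + O(q^{n/2}/n)$ (itself a consequence of \eqref{TraceFormZero} with $m=0$) produces the main term $-q^n/n$ together with an error $O(q^{n/2}/n)$. The $a^*_{2,P}$ sum is $O(q^{n/2}/n)$ by $(1)$ with $m=2$, and the bad primes contribute $O(\deg(\Delta_n)/n)$; this is in fact stronger than the stated bound.

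The only nontrivial bookkeeping is the treatment of primes of bad reduction, since Lemma \ref{PowerReduce} is valid only when $P\nmid \Delta$; this is handled uniformly by combining \eqref{amPkBound} with the bound $\#\{P:\deg P=n,\,P\mid\Delta\}\leq \deg(\Delta_n)/n$. No other obstacle is anticipated.
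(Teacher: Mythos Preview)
Your proposal is correct and follows essentially the same route as the paper: isolate the $d=1$ term in the trace formula \eqref{SymmEllTraceForm}, bound $\Tr(\Theta^n_{sym^mE})$ via $\mathfrak{n}_m\ll m$, estimate the $d\geq 2$ tail with \eqref{amPkBound} and $\pi_q(k)\leq q^k/k$, and then reduce parts $(2)$ and $(3)$ to part $(1)$ through Lemma \ref{PowerReduce}, handling the bad primes via \eqref{amPkBound} and the count $\#\{P:\deg P=n,\,P\mid\Delta\}\leq \deg(\Delta_n)/n$. Your observation that part $(3)$ actually yields the sharper error $O(q^{n/2}/n)$ is also implicit in the paper's argument.
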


\begin{proof}
To prove $(1)$, we see from \eqref{SymmEllTraceForm} that
$$\sum_{\deg(P)=n} a^*_{m,P} = -\frac{q^{n/2}}{n} \Tr\big(\Theta^n_{sym^mE}\big) - \sum_{\substack{d|n\\d>1}} 
\frac{1}{d} \sum_{\deg(P)=\frac{n}{d}} a^*_{m,P^d}.$$
Since $\Theta_{sym^mE}$ is a unitary matrix of size $\mathfrak{n}_m\times \mathfrak{n}_m$, we get $|\Tr(\Theta^n_{sym^mE})|\leq \mathfrak{n}_m\ll m$ by Remark \ref{Remark:CFJ}. Moreover, for the prime sum, we use the bound in \eqref{amPkBound} and bound the number of primes of degree $\frac{n}{d}$ by $\frac{q^{n/d}}{n/d}$ to obtain the result.

Now, for $(2)$, we apply Lemma \ref{PowerReduce} and $(1)$ to get
\begin{align*}
    \sum_{\deg(P)=n}a^*_{1,P^m} & = \sum_{\deg(P)=n } \left(a^*_{m,P} - a^*_{m-2,P}\right) + O\Big(\frac{m}{n}\deg(\Delta_n) \Big)
    \ll \frac{m}{n}\big(q^{n/2} + \deg(\Delta_n)\big),
\end{align*}
where the error term $O\left(\frac{m}{n}\deg(\Delta_n)\right)$ comes from using \eqref{amPkBound} for the primes of degree $n$ that divide $\Delta$.

Finally, for $(3)$, we still apply  Lemma \ref{PowerReduce} but now we can only apply $(1)$ on the sum of $a^*_{2,P}$ and need to use \eqref{TraceFormZero} on the sum of $a^*_{0,P}=1$. That is, we get
\begin{align*}
    \sum_{\deg(P)=n}a^*_{1,P^2} & = \sum_{\deg(P)=n } \left(a^*_{2,P} - a^*_{0,P}\right) + O\bigg(\frac{\deg(\Delta_n)}{n}\bigg)\\
    & =-\frac{q^n}{n} + O\bigg(q^{n/2} + \frac{\deg(\Delta_n)}{n}\bigg),
\end{align*}
which completes the proof.
\end{proof}

These first few lemmas are crucial in picking out the lower order terms in the family of quadratic twists. However, they are a little less useful for the family of cubic twists. In that case, we will need the following two lemmas.

\begin{lem}\label{TruncTrForm}
Let $E$ be any elliptic curve defined over $\Ff_q(T)$. For any $m\geq 2$, we have
$$-q^{n/2}\Tr(\Theta_E^n) = \sum_{\substack{d|n\\d\leq m}} \frac{n}{d} \sum_{\deg(P)=\frac{n}{d}} a^*_{1,P^d} + O\Bigg(mq^{\frac{n}{2(m+1)}} + n\sum_{\substack{d|n \\ d>m}} \deg(\Delta_{n/d})\Bigg).$$
\end{lem}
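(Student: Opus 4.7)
The starting point is the symmetric-power trace formula \eqref{SymmEllTraceForm}, taken with its $m$ equal to $1$:
\[
-q^{n/2}\Tr(\Theta_E^n) \,=\, \sum_{d \mid n} \frac{n}{d} \sum_{\deg(P)=n/d} a^*_{1,P^d}.
\]
The claim of the lemma is thus equivalent to showing that the tail
$T := \sum_{d \mid n,\, d > m} \frac{n}{d} \sum_{\deg(P)=n/d} a^*_{1,P^d}$
satisfies the bound $T = O\bigl(mq^{n/(2(m+1))} + n \sum_{d \mid n,\, d>m} \deg(\Delta_{n/d})\bigr)$.

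The main input will be Lemma \ref{RHbound}(2). Since the hypothesis $m \geq 2$ forces every tail index $d$ to satisfy $d \geq 3$, I may apply that lemma (with the roles of $n$ and $m$ in its statement now played by $n/d$ and $d$) to obtain
\[
\sum_{\deg(P)=n/d} a^*_{1,P^d} \,\ll\, \frac{d^2}{n}\bigl( q^{n/(2d)} + \deg(\Delta_{n/d}) \bigr),
\]
so every term of $T$ is of size $\ll d\bigl(q^{n/(2d)} + \deg(\Delta_{n/d})\bigr)$. The bad-reduction piece $\sum_{d \mid n,\, d > m} d\, \deg(\Delta_{n/d})$ is then immediately controlled by $n \sum_{d \mid n,\, d > m} \deg(\Delta_{n/d})$ via the trivial inequality $d \leq n$, which already matches the second half of the claimed error.

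What remains is to bound $\sum_{d \mid n,\, d > m} d\, q^{n/(2d)}$. The change of variables $e = n/d$ recasts this as $\sum_{e \mid n,\, e \leq n/(m+1)} (n/e)\, q^{e/2}$, and dropping the divisor restriction enlarges it further to $\sum_{e=1}^{\lfloor n/(m+1) \rfloor} (n/e)\, q^{e/2}$. The standing assumptions $p \neq 2, 3$ and $q \equiv 1 \bmod 6$ force $q \geq 7$, so the ratio of consecutive terms $\frac{e}{e+1} q^{1/2}$ exceeds $1$ for every $e \geq 1$. Thus the sum is geometric-like and dominated up to an absolute constant by its final term, yielding the desired bound $\ll (m+1)\, q^{n/(2(m+1))} \ll m\, q^{n/(2(m+1))}$.

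The main obstacle is the exponent $\tfrac{n}{2(m+1)}$ itself: the trivial bound $|a^*_{1,P^d}| \leq 2$ applied directly to $T$ only gives $q^{n/(m+1)}$, which misses the claimed square-root savings. The ``Riemann Hypothesis for elliptic curves'' content packaged inside Lemma \ref{RHbound}(2) (ultimately Deligne) is what buys the correct exponent, and the geometric-sum argument is the only mildly delicate step.
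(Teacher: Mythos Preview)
Your proof is correct and follows essentially the same route as the paper: reduce via \eqref{SymmEllTraceForm} to bounding the tail over $d>m$, then apply Lemma~\ref{RHbound}(2) term by term to get each summand $\ll d\bigl(q^{n/(2d)}+\deg(\Delta_{n/d})\bigr)$. The paper's proof simply asserts the final estimate $\sum_{d\mid n,\,d>m} d\,q^{n/(2d)} \ll m\,q^{n/(2(m+1))}$ without justification, whereas you supply the substitution $e=n/d$ and the geometric-sum argument that makes this rigorous; your version is the more complete of the two.
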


\begin{proof}
We see from \eqref{SymmEllTraceForm} that it suffices to bound $$\sum_{\substack{d|n\\d> m}} \frac{n}{d} \sum_{\deg(P)=\frac{n}{d}} a^*_{1,P^d}.$$
Applying Lemma \ref{RHbound}, we bound this sum by
$$\sum_{\substack{d|n\\d> m}} \left(dq^{\frac{n}{2d}} + d\deg(\Delta_{n/d})\right)\ll mq^{\frac{n}{2(m+1)}} + n\sum_{\substack{d|n \\ d>m}} \deg(\Delta_{n/d}),$$
and the claimed estimate follows. 
\end{proof}

Finally, we note that if we fix an elliptic curve $\widetilde{E} : y^2=x^3+B$ and perform a cubic twist by $D$ for some $D\in\F_N(B)$, then we get that 
$$\deg\big(\Delta(\widetilde{E}_D)_{n/d}\big) \ll \deg\big(\Delta(\widetilde{E})_{n/d}\big) + \deg(D_{n/d}).$$
Thus, in Section \ref{Section:cubic}, we will need a bound of a modified expected value of $\deg(D_{n/d})$ as $D$ ranges over $\F_N(B)$.

\begin{lem}\label{BadPrimeBound}
We have
$$\frac{1}{|\F_N(B)|} \sum_{D\in\F_B(N)} n \sum_{d|n} \deg(D_{n/d}) \ll n\tau(n),$$
where $\tau$ is the number of divisors function.
\end{lem}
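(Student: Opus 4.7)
The plan is to exchange the order of summation in $\sum_{D \in \F_N(B)}\sum_{d \mid n}\deg(D_{n/d})$ and reduce the problem to controlling, for each prime $P$ of degree dividing $n$, the proportion of $D \in \F_N(B)$ divisible by $P$. Since no prime of degree exceeding $N$ can divide a $D$ with $\deg(\rad(D))=N$,
$$\sum_{D \in \F_N(B)}\sum_{d \mid n}\deg(D_{n/d}) \;=\; \sum_{\substack{P \\ \deg(P) \mid n,\ \deg(P)\leq N}} \deg(P)\,\bigl|\bigl\{D \in \F_N(B) : P \mid D\bigr\}\bigr|,$$
so it will suffice to prove the \emph{key estimate}
$$\bigl|\bigl\{D \in \F_N(B) : P \mid D\bigr\}\bigr| \;\ll\; \frac{|\F_N(B)|}{|P|}$$
uniformly for primes $P$ with $1 \leq \deg(P) \leq N$.

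To establish the key estimate, I would write each $D \in \F_N(B)$ with $P \mid D$ as $D = P^a D'$ with $a \in \{1,2\}$ and $P \nmid D'$. Then $D'$ is cube-free, monic, coprime to $BP$, with $\deg(\rad(D')) = N - \deg(P)$ and $\deg(D') \equiv -a\deg(P) \pmod 3$. Both the count of such $D'$ and the size of $\F_N(B)$ itself are governed by the Dirichlet series
$$\sum_{\substack{D \text{ cube-free monic}\\(D,B)=1}} u^{\deg(\rad(D))} \;=\; \prod_{P \nmid B}\bigl(1 + 2u^{\deg(P)}\bigr) \;=\; \zeta_q(u)^2\, H_B(u),$$
where $H_B(u) = \prod_{P \mid B}(1-u^{\deg(P)})^2 \prod_{P \nmid B}(1 - 3u^{2\deg(P)} + 2u^{3\deg(P)})$ is holomorphic in $|u| < q^{-1/2}$. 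Since $\zeta_q(u)^2 = (1-qu)^{-2}$ has a double pole at $u = q^{-1}$, a standard Tauberian argument combined with a roots-of-unity filter enforcing the congruence on $\deg(D)$ modulo $3$ yields both $|\F_N(B)| \asymp Nq^N$ and the bound $\ll (N-\deg(P))\, q^{N-\deg(P)}$ for the number of admissible $D'$, uniformly in $P$. Dividing these estimates produces the key estimate.

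With the key estimate in hand, the rest is a direct computation. Using that there are at most $q^d/d$ primes of degree $d$, I obtain
$$\sum_{\substack{P\\ \deg(P) \mid n,\ \deg(P) \leq N}} \frac{\deg(P)}{|P|} \;\leq\; \sum_{\substack{d \mid n \\ d \leq N}} \frac{d \cdot q^d/d}{q^d} \;\leq\; \tau(n),$$
and therefore
$$\frac{1}{|\F_N(B)|}\sum_{D \in \F_N(B)} n\sum_{d \mid n}\deg(D_{n/d}) \;\ll\; n\tau(n),$$
as claimed.

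The main technical obstacle is establishing the key estimate with an implicit constant uniform in $P$. Concretely, the Tauberian extraction must be sufficiently uniform that the asymptotic $|\F_N(B)| \asymp Nq^N$ and its analogue for the family obtained by replacing $B$ with $BP$ and shifting the mod-$3$ class both hold with comparable constants; moreover, one has to verify that the roots-of-unity filter for $3 \mid \deg(D)$ does not cancel the main term. This is genuine but routine: the non-trivial characters produce Euler products that are regular at $u = q^{-1}$ (since they inherit an explicit factor $(1-qu)$ from $1/\zeta_q(u)$ upon rearrangement), so their contribution is exponentially smaller than the principal term.
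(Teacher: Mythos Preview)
Your proposal is correct and follows essentially the same approach as the paper: exchange summations, reduce to the key estimate $|\{D\in\F_N(B):P\mid D\}|\ll |\F_N(B)|/|P|$, prove this via $D=P^aD'$ and an Euler-product/Tauberian count, and finish with the prime-counting bound. The only difference is that the paper, when counting the $D'$, simply drops the coprimality-to-$B$ and the congruence $\deg(D')\equiv -a\deg(P)\pmod 3$ conditions (which can only enlarge the count), thereby avoiding the roots-of-unity filter you describe; this is slightly cleaner but otherwise identical in substance.
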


\begin{proof}
We have that
\begin{align*}
   \frac{1}{|\F_N(B)|} \sum_{D\in\F_B(N)} n \sum_{d|n} \deg(D_{n/d}) & = \frac{1}{|\F_N(B)|} \sum_{D\in\F_B(N)} n \sum_{d|n} \sum_{\substack{P|D \\ \deg(P)=\frac{n}{d}}} \deg(P) \\ 
   & =  n\sum_{d|n} \sum_{\deg(P)=\frac{n}{d}} \deg(P) \Bigg(\frac{\big|\{D \in \F_N(B) : P|D \}\big|}{|\F_N(B)|}  \Bigg)\\ 
   & \ll n \sum_{d|n} \sum_{\deg(P)=\frac{n}{d}} \frac{\deg(P)}{q^{\deg(P)}}  \ll n \sum_{\substack{ d|n \\ d\geq \frac{n}{N} }} 1 \ll n\tau(n),
\end{align*}
where we have used Remark \ref{Sec2Rem} to bound $\frac{|\{D \in \F_N(B) : P|D \}|}{|\F_N(B)|}$.
\end{proof}

\section{Quadratic twists}\label{Section:quadratic}

In this section, we prove Theorem \ref{QuadThm} and Corollary \ref{QuadCor}.

\subsection{A formula for $a^*_{1,P^k}(E_D)$}

Recall that we are considering an elliptic curve given by the equation
\begin{align*}
E: y^2=x^3+Ax+B,    
\end{align*} 
where $A,B\in\Ff_q[T]$, and that for every $D\in\Hh^{\pm}_N(\Delta)$, we have the quadratic twist
\begin{align}\label{E-eq}
E_D : y^2=x^3+AD^2x+BD^3.
\end{align}
While it is well known how $a^*_{1,P^k}(E_D)$ behaves as we vary $D$, we will prove it here to illustrate the differences between the quadratic twists and the cubic twists (cf.\ Section \ref{Subsection:cubiccoeff}). 

\begin{lem}\label{NumPtsLem}
For any elliptic curve $E$ with discriminant $\Delta$, prime  $P$ and $D\in\Hh^{\pm}_{N}(\Delta)$, we have
$$a_P(E_D) = \left(\frac{D}{P}\right) a_P(E),$$
where for any $F,G\in\Ff_q[T]$, $\left(\frac{F}{G}\right)$ is the quadratic residue symbol.
\end{lem}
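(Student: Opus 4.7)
The strategy is a classical character-sum computation relating the point counts on $E$ and $E_D$ modulo $P$. Let $\chi_P(\cdot):=\left(\frac{\cdot}{P}\right)$ denote the quadratic residue character modulo $P$ (with the convention $\chi_P(0)=0$), and write $f(x):=x^3+Ax+B$ and $f_D(x):=x^3+AD^2x+BD^3$, so that $E:y^2=f(x)$ and $E_D:y^2=f_D(x)$.

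My key step is the algebraic identity
$$f_D(Dx)=D^3f(x),$$
which is immediate by expansion. I would first treat primes $P$ of good reduction on both curves, i.e.\ $P\nmid \Delta D$. Since $P\nmid D$, the map $x\mapsto Dx$ is a bijection of $\Ff_{q^{\deg(P)}}$, so after substitution
$$\sum_{x}\chi_P\bigl(f_D(x)\bigr) = \chi_P(D^3)\sum_{u}\chi_P\bigl(f(u)\bigr) = \chi_P(D)\sum_{u}\chi_P\bigl(f(u)\bigr),$$
using complete multiplicativity and $\chi_P(D)^3=\chi_P(D)$. Combining this with the standard point-count $\#E(P)=q^{\deg(P)}+1+\sum_x\chi_P(f(x))$ applied to both $E$ and $E_D$, and invoking the paper's normalization $\#E(P)=q^{\deg(P)}+1-a_P(E)q^{\deg(P)/2}$, immediately yields $a_P(E_D)=\chi_P(D)a_P(E)$ at every good prime.

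For the bad primes the same identity holds but requires a brief case split. If $P\mid D$, then $E_D$ acquires additive reduction at $P$ (as noted in the setup), so $a_P(E_D)=0$, which matches $\chi_P(D)=0$ on the right-hand side. If instead $P\mid \Delta$ and $P\nmid D$, one checks case by case on the reduction type of $E$ at $P$: additive reduction is preserved by the twist, giving $a_P(E)=a_P(E_D)=0$; while for multiplicative reduction, the split/non-split type is preserved or exchanged according to whether $\chi_P(D)=+1$ or $-1$, so again $a_P(E_D)=\chi_P(D)a_P(E)$. I expect the main technical nuisance to be this bookkeeping at the bad primes — in particular, respecting the convention that $\#E(P)$ counts only the non-singular locus, so that the above character-sum formula may need a mild correction at the singular $x$-values — but this correction is itself easily identified and does not affect the identity, so the uniform formula holds for all primes $P$.
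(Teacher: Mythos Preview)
Your proposal is correct and follows essentially the same character-sum approach as the paper: the substitution $x\mapsto Dx$ and the factorization $f_D(Dx)=D^3f(x)$ are exactly what the paper does. The only difference is at primes $P\mid\Delta$ with $P\nmid D$: the paper applies the character-sum substitution uniformly to all $P\nmid D$ without separating out this case, whereas you argue via the reduction type of $E$ at $P$. Both are valid; the paper's version is slightly more economical, while your caution about the non-singular-locus convention is well placed (the character-sum formula in the paper actually counts the singular point too, but the same offset appears on both sides and cancels).
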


\begin{proof}
Recall that $a_P(E_D)$ is defined such that the relation
\begin{align}\label{NumPtsForm}
\#E_D(P) = q^{\deg(P)}+1 - a_P(E_D)q^{\deg(P)/2}
\end{align}
holds. Let us compute $\#E_D(P)$. Since $E_D(P)$ is a curve given by the cubic equation \eqref{E-eq}, reduced modulo $P$, we get that there is exactly one point lying above the point at infinity. For the finite points $F\in \Ff_q[T]/(P)$, the number of points lying above $F$ on $E_D(P)$ is
$$\begin{cases}
2 & \text{if $F^3+AD^2F+BD^3$ is a non-zero square mod $P$,}\\
1 & \text{if $F^3+AD^2F+BD^3\equiv 0$ mod $P$,}\\
0 & \text{if $F^3+AD^2F+BD^3$ is a non-square mod $P$.}
\end{cases}$$
Therefore, we may capture the number of points on $E_D(P)$ as a character sum:
\begin{align}\label{numpts}
\#E_D(P) & = 1 + \sum_{F\bmod{P}} \left(1 + \left(\frac{F^3+AD^2F+BD^3}{P}\right)\right),
\end{align}
where the first term in the right-hand side is the contribution from the point lying over the point at infinity.

Now, if $P\nmid D$, then for every $F\bmod{P}$, we can find a unique $G\bmod{P}$ such that $F=GD$. Hence
\begin{align*}
\#E_D(P) & = 1 + \sum_{G\bmod{P}} \left(1 + \left(\frac{(GD)^3+AD^2(GD)+BD^3}{P}\right)\right)\\
& = q^{\deg(P)} +1 + \left(\frac{D}{P}\right)\sum_{G\bmod{P}}\left(\frac{G^3+AG+B}{P}\right) \\
& = q^{\deg(P)} +1 - \left(\frac{D}{P}\right)a_P(E)q^{\deg(P)/2}.
\end{align*}
Comparing this to \eqref{NumPtsForm} completes the proof for all primes $P\nmid D$. 

On the other hand, if $P|D$, then we see that \eqref{numpts} becomes
\begin{align*}
\#E_D(P) & =1+ \sum_{F\bmod{P}} \left(1 + \left(\frac{F}{P}\right)\right) = q^{\deg(P)}+1.
\end{align*}
It follows that $a_P(E_D)=0=\left(\frac{D}{P}\right)a_P(E)$, which concludes the proof.
\end{proof}

Lemma \ref{NumPtsLem} has the following immediate consequence.

\begin{cor}\label{alphaLem}
With $\alpha_P(E_D)$ and $\beta_P(E_D)$ defined as in \eqref{alphaDef}, we have
$$\alpha_P(E_D) = \left(\frac{D}{P}\right)\alpha_P(E) \hspace{8pt}\mbox{ and }\hspace{8pt} \beta_P(E_D) = \left(\frac{D}{P}\right)\beta_P(E).$$
Consequently, 
$$a^*_{1,P^k}(E_D) = \left(\frac{D}{P}\right)^ka^*_{1,P^k}(E).$$
\end{cor}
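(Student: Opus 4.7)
The plan is to deduce the identity $\alpha_P(E_D) = \left(\frac{D}{P}\right)\alpha_P(E)$, $\beta_P(E_D) = \left(\frac{D}{P}\right)\beta_P(E)$ by a short case analysis on the reduction type of $P$ on $E_D$, and then derive the formula for $a^*_{1,P^k}(E_D)$ by plugging the first identity into \eqref{amPkform}.

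First I would handle the generic case $P\nmid D\Delta$, where $E_D$ has good reduction at $P$. Here \eqref{alphaDef} characterises $\{\alpha_P(E),\beta_P(E)\}$ as the pair of numbers with sum $a_P(E)$ and product $1$, and likewise for $E_D$. Since $\left(\frac{D}{P}\right)=\pm1$ in this case, multiplying both roots by $\left(\frac{D}{P}\right)$ preserves the product (it stays equal to $1$) while rescaling the sum by $\left(\frac{D}{P}\right)$, which by Lemma \ref{NumPtsLem} is exactly $a_P(E_D)$. Uniqueness of the pair (up to labelling) forces the claimed identities. For the bad-reduction cases I would split further: if $P\mid D$ then $E_D$ has additive reduction at $P$, so $\beta_P(E_D)=0$ and $\alpha_P(E_D)=a_P(E_D)=0$ by Lemma \ref{NumPtsLem}, matching the right-hand side since $\left(\frac{D}{P}\right)=0$; if instead $P\mid\Delta$ (which forces $P\nmid D$ as $D$ is coprime to $\Delta$) then $\beta_P(E)=\beta_P(E_D)=0$ by convention and the identity reduces to $\alpha_P(E_D)=a_P(E_D)=\left(\frac{D}{P}\right)a_P(E)=\left(\frac{D}{P}\right)\alpha_P(E)$, which is again Lemma \ref{NumPtsLem}.

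For the consequence, I would invoke \eqref{amPkform} at $m=1$, which gives $a^*_{1,P^k}=\alpha_P^k+\beta_P^k$ uniformly in $P$ (the equality remains valid at bad primes thanks to the convention $\beta_P=0$). Applying this both to $E$ and to $E_D$ and substituting the identity proved in the previous paragraph yields $a^*_{1,P^k}(E_D)=\left(\frac{D}{P}\right)^k(\alpha_P(E)^k+\beta_P(E)^k)=\left(\frac{D}{P}\right)^k a^*_{1,P^k}(E)$. There is no substantive obstacle here; the only thing to watch is careful bookkeeping of the bad-reduction conventions so that \eqref{amPkform} can be applied uniformly across all primes.
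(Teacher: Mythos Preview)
Your proof is correct and follows essentially the same approach as the paper: Lemma \ref{NumPtsLem} pinned down the roots of the local Euler factor, and then the consequence for $a^*_{1,P^k}$ follows from $a^*_{1,P^k}=\alpha_P^k+\beta_P^k$. The only cosmetic difference is that the paper treats all bad primes $P\mid D\Delta$ at once via the identity $a^*_{1,P^k}=a_P^k$ rather than via your extended form of \eqref{amPkform}; both amount to the same computation given the convention $\alpha_P=a_P$, $\beta_P=0$ at bad primes.
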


\begin{proof}
If $P\nmid D\Delta$, then we get by Lemma \ref{NumPtsLem} that
$$1-a_P(E_D)u+u^2 = \left(1-\alpha_P(E)\left(\frac{D}{P}\right)u\right) \left(1-\beta_P(E)\left(\frac{D}{P}\right)u\right),$$
so that
$$ \alpha_P(E_D) = \left(\frac{D}{P}\right) \alpha_P(E) \hspace{8pt}\mbox{ and }\hspace{8pt} \beta_P(E_D) = \left(\frac{D}{P}\right)\beta_P(E). $$
Hence, by \eqref{amPkform} we obtain
\begin{align*}
    a^*_{1,P^k}(E_D) = \alpha^k_P(E_D)+\beta^k_P(E_D) = \left(\frac{D}{P}\right)^k a^*_{1,P^k}(E).
\end{align*}
Moreover, for $P|D\Delta$, we get by \eqref{amPkDef} and Lemma \ref{NumPtsLem} that
$$a^*_{1,P^k}(E_D) = (a_{P}(E_D))^k= \left(\frac{D}{P}\right)^k (a_{P}(E))^k = \left(\frac{D}{P}\right)^k a^*_{1,P^k}(E),$$
which completes the proof.
\end{proof}

\subsection{A trace formula}

The starting point for our proof of Theorem \ref{QuadThm} is the following trace formula. Combining Corollary \ref{alphaLem} with \eqref{SymmEllTraceForm}, we get
\begin{align*}
    \left\langle \Tr\left(\Theta^n_{E_D}\right) \right\rangle_{\Hh^{\pm}_{N}(\Delta)} = -\frac{q^{-n/2}}{|\Hh^{\pm}_{N}(\Delta)|} \sum_{d|n} \frac{n}{d} \sum_{\deg(P)=\frac{n}{d}} a^*_{1,P^d}  \sum_{D\in\Hh^{\pm}_{N}(\Delta)} \left(\frac{D}{P}\right)^d.
\end{align*}
For convenience, we define
\begin{align*}
    MT^{\pm}(n,N) := - q^{-n/2} \sum_{\substack{d|n \\ 2|d}} \frac{n}{d} \sum_{\deg(P)=\frac{n}{d}}  a^*_{1,P^d} \frac{|\Hh^{\pm}_{N}(P\Delta)|}{|\Hh^{\pm}_{N}(\Delta)|}
\end{align*}
and
\begin{align*}
    ET^{\pm}(n,N) := -\frac{ q^{-n/2}}{|\Hh^{\pm}_{N}(\Delta)|} \sum_{\substack{d|n \\ 2\nmid d}} \frac{n}{d} \sum_{\deg(P)=\frac{n}{d}} a^*_{1,P^d} \sum_{D\in\Hh^{\pm}_{N}(\Delta)} \left(\frac{D}{P}\right).
\end{align*}

\subsection{Estimating $M^{\pm}(n,N)$}

We see that in order to compute $MT^{\pm}(n,N)$ it is enough to prove the following proposition.

\begin{prop}\label{CoprimeProp}
For any prime $P$, we have
$$\frac{|\Hh^{\pm}_{N}(P\Delta )|}{|\Hh^{\pm}_{N}(\Delta)|}  =  \begin{cases} \frac{|P|}{|P|+1} + O(q^{-N/2}) & P\nmid \Delta, \\ 1 & P|\Delta. \end{cases}$$
\end{prop}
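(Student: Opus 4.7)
If $P\mid\Delta$, the statement is immediate: $P$ is already among the prime divisors of $\Delta$, so $\Hh^{\pm}_N(P\Delta) = \Hh^{\pm}_N(\Delta)$ and the ratio equals $1$.

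Assume therefore $P\nmid\Delta$, and let $R$ denote either $\Delta$ or $P\Delta$. Since $M\mid\Delta\mid R$, every $D\in\Hh_N(R)$ is coprime to $M$, so $\chi_D(M)\in\{\pm 1\}$. The indicator identity $\mathbf{1}[\chi_D(M)=\pm\epsilon_N\epsilon(E)] = \tfrac{1}{2}(1\pm \epsilon_N\epsilon(E)\chi_D(M))$ then gives the orthogonality decomposition
\begin{align*}
|\Hh^{\pm}_N(R)| = \frac{1}{2}|\Hh_N(R)| \pm \frac{\epsilon_N\epsilon(E)}{2}\sum_{D\in\Hh_N(R)}\chi_D(M).
\end{align*}
The unsigned counts are accessible through the generating function
\begin{align*}
\sum_{N\geq 0}|\Hh_N(R)|u^N = \frac{1-qu^2}{1-qu}\prod_{P'\mid R}\frac{1}{1+u^{\deg(P')}},
\end{align*}
whose only pole in $|u|\le q^{-1/2}$ is the simple pole at $u=q^{-1}$. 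A partial-fractions expansion then yields $|\Hh_N(R)| = c_R q^N + O(1)$ with $c_R = (1-q^{-1})\prod_{P'\mid R}(1+|P'|^{-1})^{-1}$, so that
\begin{align*}
\frac{|\Hh_N(P\Delta)|}{|\Hh_N(\Delta)|} = \frac{1}{1+|P|^{-1}} + O(q^{-N}) = \frac{|P|}{|P|+1} + O(q^{-N}).
\end{align*}

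It remains to show the character sum satisfies $\sum_{D\in\Hh_N(R)}\chi_D(M) \ll q^{N/2}$. The non-emptiness assumption on $\Hh^{\pm}_N(\Delta)$ forces $M\neq 1$, and as $M$ is a product of distinct primes of multiplicative reduction it is square-free, so $\chi_M$ is a non-trivial quadratic character modulo $M$. I would apply quadratic reciprocity in $\Ff_q[T]$ to rewrite $\chi_D(M)$ in terms of $\chi_M(D)$, picking up at most a sign depending only on $\deg(D)$ and on $M$, then remove the square-free and coprimality-to-$R$ constraints on $D$ via M\"obius inversion, reducing the sum to a linear combination of sums of $\chi_M(D)$ over all monic $D$ of bounded degree. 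Each such sum is $O(q^{N'/2})$ by the Weil bound. Feeding this estimate into the numerator and denominator of $|\Hh^{\pm}_N(P\Delta)|/|\Hh^{\pm}_N(\Delta)|$ and expanding the resulting geometric series in $q^{-N/2}$ gives the claimed asymptotic.

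The only substantive step is the square-root cancellation estimate for the character sum; everything else is a generating function and bookkeeping exercise. I expect this Weil-bound input to be routine in the function-field setting, and the resulting $O(q^{-N/2})$ (rather than $O(q^{-N})$) error in the final statement is precisely what one expects from square-root cancellation in the character sum numerators.
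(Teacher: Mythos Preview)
Your approach is correct and matches the paper's: both split via the root-number indicator into $\tfrac{1}{2}|\Hh_N(R)|$ plus the character sum $\sum_{D}\chi_D(M)$, compute the former by the Euler-product/generating-function argument (the paper's Lemma~\ref{SizeLem}), and bound the latter by $O(q^{N/2})$ via quadratic reciprocity and the Weil bound for $L(u,\chi_M)$ (the paper's Lemma~\ref{pmSizeLem})---the paper just packages this last step as a contour integral of $\mathcal{G}_\Delta(u,\chi_M)=L(u,\chi_M)\zeta_q(u^2)^{-1}\cdot(\text{finite Euler product})$ rather than M\"obius inversion. One small slip: non-emptiness of $\Hh^{\pm}_N(\Delta)$ does not force $M\neq 1$ (when $M=1$ the non-empty family is all of $\Hh_N(\Delta)$ and $\chi_D(M)\equiv 1$, so the character sum is trivial and the proposition reduces immediately to your unsigned-ratio computation).
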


We first note that the case where $P|\Delta$ is trivial as in this case $\Hh_{N}^{\pm}(P\Delta) = \Hh_N^{\pm}(\Delta)$. The proof of the remaining part of the proposition, i.e.\ the case where $P\nmid \Delta$, follows immediately from the following two lemmas.

\begin{lem}\label{pmSizeLem}
Let $E$ be an elliptic curve defined over $\Ff_q[T]$. If $M\neq1$, then, for any $\Delta\in\Ff_q[T]$ (not necessarily the discriminant of $E$), we have
$$|\Hh^{\pm}_N(\Delta)| = \frac{1}{2}|\Hh_N(\Delta)| + O_\Delta(q^{N/2}). $$
Moreover, if $M=1$, then either $\Hh^{+}_N(\Delta)=\Hh_N(\Delta)$ or $\Hh^{-}_N(\Delta)=\Hh_N(\Delta)$.
\end{lem}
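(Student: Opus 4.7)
The plan is to reduce the claim to a standard character sum bound. For the degenerate case $M=1$, note that $\chi_D(M)=\chi_D(1)=1$ for every $D$, so the condition $\chi_D(M)=\pm\epsilon_N\epsilon(E)$ either holds for every $D\in\Hh_N(\Delta)$ or for none, depending solely on the sign of $\epsilon_N\epsilon(E)$. This yields the second assertion.

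For $M\neq 1$, I would use orthogonality. Provided $(D,M)=1$ for every $D\in\Hh_N(\Delta)$ (which holds in the applications of the lemma, where $M\mid\Delta$ because primes of bad reduction divide the discriminant of $E$), we have $\chi_D(M)\in\{\pm1\}$, and the identity
$$\bigl[\chi_D(M)=\pm\epsilon_N\epsilon(E)\bigr] = \tfrac{1}{2}\bigl(1\pm\epsilon_N\epsilon(E)\chi_D(M)\bigr)$$
yields
$$|\Hh_N^{\pm}(\Delta)| = \tfrac{1}{2}|\Hh_N(\Delta)| \pm \tfrac{1}{2}\epsilon_N\epsilon(E)\sum_{D\in\Hh_N(\Delta)}\chi_D(M).$$
It therefore suffices to show that this character sum is $O_\Delta(q^{N/2})$.

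To do this, I would apply quadratic reciprocity in $\Ff_q[T]$ to replace $\chi_D(M)$ by $\chi_M(D)$ up to an explicit sign depending only on $N$, $\deg M$, and fixed leading coefficients (in particular, independent of the individual $D$). Since $M$ is squarefree (being a product of distinct primes of multiplicative reduction) and $M\neq1$, the character $\chi_M$ is a primitive nontrivial quadratic character modulo $M$, so $L(u,\chi_M)$ is a polynomial of degree $\deg M-1$ whose zeros lie on $|u|=q^{-1/2}$ by the Riemann Hypothesis for function field $L$-functions. Expanding
$$\sum_{\substack{D\text{ monic sqfree}\\(D,\Delta)=1}}\chi_M(D)u^{\deg D} = \frac{L(u,\chi_M)(1-qu^2)}{\prod_{P\mid M}(1-u^{2\deg P})\prod_{\substack{P\mid\Delta\\P\nmid M}}(1+\chi_M(P)u^{\deg P})},$$
the $u^N$ coefficient is then extracted via partial fractions: the numerator has coefficients of size $O_M(q^{N/2})$ by the RH bound, while the denominator has all roots on $|u|=1$, contributing only polynomial-in-$N$ growth, which together yield the desired $O_\Delta(q^{N/2})$ estimate.

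The main technical point is essentially bookkeeping---tracking the $\Delta$-dependence of the implied constants through the partial fraction expansion and ensuring the polynomial-in-$N$ contribution from the unit-circle poles is cleanly absorbed into $q^{N/2}$. The only nontrivial analytic input is the function field Riemann Hypothesis for $L(u,\chi_M)$, which is classical in this setting.
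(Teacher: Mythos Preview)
Your proposal is correct and follows essentially the same strategy as the paper: the orthogonality identity reduces to bounding $\sum_{D\in\Hh_N(\Delta)}\chi_D(M)$, quadratic reciprocity turns this into a sum of $\chi_M(D)$, and the generating series is expressed in terms of $L(u,\chi_M)/\zeta_q(u^2)$ times finite Euler factors, after which the Riemann Hypothesis for $L(u,\chi_M)$ yields the bound. The paper extracts the coefficient by a direct contour bound on $|u|=q^{-1/2}$ rather than partial fractions, which is slightly cleaner (the finite Euler factors are simply $O_\Delta(1)$ on that circle, avoiding the need to track polynomial-in-$N$ contributions from unit-circle poles), but the two coefficient-extraction methods are equivalent here. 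Your explicit acknowledgment of the implicit hypothesis $M\mid\Delta$ (so that $\chi_D(M)\neq 0$ for $D\in\Hh_N(\Delta)$) is a good observation; the paper uses this tacitly.
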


\begin{proof}
The second part of the lemma follows immediately from the formula for the root number $\epsilon(E_D)$. For the first part, we have
\begin{align*}
    |\Hh^{\pm}_N(\Delta)| & = \sum_{D\in\Hh_N(\Delta)} \frac{1}{2} \big(1 \pm \epsilon_N\epsilon(E)\chi_D(M)\big) = \frac{1}{2}|\Hh_N(\Delta)| \pm \frac{\epsilon_N\epsilon(E)}{2} \sum_{D\in \Hh_N(\Delta)} \chi_D(M).
\end{align*}
Now, by quadratic reciprocity, we have that 
$$\chi_D(M) = (-1)^{\frac{q-1}{2}\deg(M)N} \chi_M(D).$$
Furthermore, we observe that 
\begin{align*}
    \mathcal{G}_{\Delta}(u,\chi_M) & := \sum_{(D,\Delta)=1} \mu^2(D) \chi_M(D) u^{\deg(D)}  = \prod_{P\nmid \Delta} \big(1 + \chi_M(P)u^{\deg(P)}\big) \\
    & = \prod_{P|\Delta} \big(1 + \chi_M(P)u^{\deg(P)}\big)^{-1} \prod_{P|M} \big(1-u^{2\deg(P)}\big)^{-1} \frac{L(u,\chi_M)}{\zeta_q(u^2)}.
\end{align*}
Hence, the above generating series can be analytically extended to the region $|u|\leq q^{-1/2}$ and we conclude that
$$\sum_{D\in \Hh_N(\Delta)} \chi_D(M) = \frac{1}{2\pi i} \oint_{\Gamma} \frac{\mathcal{G}_{\Delta}(u,\chi_M)}{u^{N+1}}\,du \ll q^{N/2} \max_{u\in \Gamma}  \left| \mathcal{G}_{\Delta}(u,\chi_M) \right|, $$
where $\Gamma = \{u : |u|=q^{-1/2}\}$. We also note that 
$$\max_{u\in \Gamma} \Bigg|\prod_{P|\Delta} \big(1 + \chi_M(P)u^{\deg(P)}\big)^{-1} \prod_{P|M} \big(1-u^{2\deg(P)}\big)^{-1}\Bigg| = O_{\Delta}(1).$$

Finally, we use the fact that the Riemann Hypothesis is known for the $L$-function $L(u,\chi_M)$ to get that there exists a unitary matrix $\Theta_M$ of size $\mathcal{M}\times\mathcal{M}$, where $\mathcal M\leq\deg(M)-1$, such that $L(u,\chi_M) = \det(1-\sqrt{q}u\Theta_M)$. Thus
$$\max_{u\in \Gamma} \left|L(u,\chi_M)\right| = \max_{|u|=1} \det(1-u\Theta_M) \ll 2^{\deg(M)} = O(1),$$
and the result follows.
\end{proof}

Next, we estimate the size of $\Hh_{N}(\Delta)$.

\begin{lem}\label{SizeLem}
For any $\Delta\in\Ff_q[T]$, we have
$$|\Hh_{N}(\Delta)| = q^{N-1}(q-1) \prod_{Q|\Delta} \frac{|Q|}{|Q|+1} + O_\Delta(1),$$
where the product is over all prime divisors of $\Delta$.
\end{lem}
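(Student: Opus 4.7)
My plan is to realize $|\Hh_N(\Delta)|$ as the $N$th coefficient of an explicit rational generating function and extract its asymptotics by residue analysis. Since $\Hh_N(\Delta)$ counts monic square-free polynomials of degree $N$ coprime to $\Delta$, its generating function has the Euler product
$$ \sum_{N \geq 0} |\Hh_N(\Delta)|\, u^N = \prod_{P \nmid \Delta} \bigl( 1 + u^{\deg P} \bigr). $$
Using $\prod_P (1 - u^{\deg P})^{-1} = \zeta_q(u) = (1-qu)^{-1}$ together with $1 + u^{\deg P} = (1 - u^{2\deg P})/(1 - u^{\deg P})$, this rearranges into the closed form
$$ A(u, \Delta) := \frac{1 - qu^2}{1 - qu} \prod_{P \mid \Delta} \frac{1}{1 + u^{\deg P}}, $$
a rational function whose only pole strictly inside the unit disc is a simple pole at $u = 1/q$, with the remaining poles (coming from the zeros of $1 + u^{\deg P}$ for $P \mid \Delta$) all lying on the unit circle.

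To extract the main term, I would start from Cauchy's integral formula on a small contour around the origin and deform the contour outward past $u = 1/q$. A direct residue computation, using $\Res_{u=1/q}(1 - qu)^{-1} = -1/q$, yields
$$ \Res_{u=1/q} \frac{A(u, \Delta)}{u^{N+1}} = -\frac{1}{q}\cdot\frac{1-q^{-1}}{\prod_{P\mid\Delta}(1+|P|^{-1})}\cdot q^{N+1} = -q^{N-1}(q - 1) \prod_{Q \mid \Delta} \frac{|Q|}{|Q|+1}, $$
so by the residue theorem this residue supplies exactly the asserted main term in $|\Hh_N(\Delta)|$ with the correct sign.

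It then remains to bound the coefficient of $u^N$ in the partial-fraction remainder $A(u, \Delta) - c_\Delta (1 - qu)^{-1}$, where $c_\Delta$ is chosen so as to cancel the pole at $u = 1/q$. This remainder is a rational function whose only poles are the zeros of $\prod_{P \mid \Delta}(1 + u^{\deg P})$, all of which lie on $|u| = 1$. A partial-fraction expansion then exhibits its $N$th coefficient as a finite $\Delta$-dependent sum of contributions of the form $\psi_\zeta(N)\,\zeta^{-N}$ with $|\zeta| = 1$; since the set of unit-circle poles and the polynomials $\psi_\zeta$ are determined entirely by the factorization of $\prod_{P\mid\Delta}(1 + u^{\deg P})$, the whole sum is bounded uniformly in $N$ by a constant depending only on $\Delta$, giving the claimed $O_\Delta(1)$ error term. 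The one technical point I foresee is that two factors $1 + u^{\deg P_i}$ may share a common root on $|u| = 1$, producing higher-order poles, but the number and orders of such poles again depend only on $\Delta$ and so are absorbed into the same $\Delta$-dependent constant.
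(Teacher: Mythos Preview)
Your approach---set up the generating function, extract the main term from the residue at $u=q^{-1}$, and control the rest via the remaining singularities---is exactly the paper's strategy (the paper writes the generating function as $\prod_{Q\mid\Delta}(1+u^{\deg Q})^{-1}\cdot\zeta_q(u)/\zeta_q(u^2)$ and bounds a contour integral rather than doing partial fractions, but the content is the same).

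The gap is in your final paragraph. You correctly observe that distinct prime factors of $\Delta$ with a common degree produce higher-order poles on $|u|=1$, but you then assert these are ``absorbed into the same $\Delta$-dependent constant.'' They are not: a pole of order $k\geq 2$ at a point $\zeta$ with $|\zeta|=1$ contributes a term of size $\asymp N^{k-1}$, not $O(1)$, to the $N$th Taylor coefficient. Concretely, take $\Delta=P_1P_2$ with $\deg P_1=\deg P_2=1$; then $A(u,\Delta)=(1-qu^2)\big/\bigl((1-qu)(1+u)^2\bigr)$, and a direct partial-fraction computation gives
\[
|\Hh_N(\Delta)|-q^{N-1}(q-1)\,\frac{q^2}{(q+1)^2}
=\frac{q(q+3)}{(q+1)^2}(-1)^N+\frac{1-q}{1+q}\,(N+1)(-1)^N,
\]
which grows linearly in $N$. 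So the stated error $O_\Delta(1)$ is in fact too optimistic in general; what your argument actually yields is $O_\Delta\bigl(N^{\omega(\Delta)-1}\bigr)$. This weaker bound is still far more than sufficient for the application in Proposition~\ref{CoprimeProp}, since the $O_\Delta(q^{N/2})$ term coming from Lemma~\ref{pmSizeLem} already dominates. (For what it is worth, the paper's own contour-integral bound on the circle $|u|=\tfrac12$ literally gives only $O_\Delta(2^N)$, so both arguments share this oversight.)
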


\begin{proof}
For any $\Delta\in\Ff_q[T]$, let
$$\mathcal{G}_{\Delta}(u) := \sum_{(D,\Delta)=1} \mu^2(D) u^{\deg(D)} = \sum_{N=0}^{\infty} |\Hh_N(\Delta)|u^N.$$
We can then write $\mathcal{G}_{\Delta}(u)$ as an Euler product:
$$\mathcal{G}_{\Delta}(u) = \prod_{Q\nmid \Delta} \big(1+u^{\deg(Q)}\big) = \prod_{Q|\Delta} \big(1+u^{\deg(Q)}\big)^{-1} \frac{\zeta_q(u)}{\zeta_q(u^2)}. $$
Hence, we get that $\mathcal{G}_{\Delta}(u)$ can be meromorphically extended to the region $|u|<1$ with a simple pole at $u=q^{-1}$. Therefore, if $\Gamma = \{u:|u|=\frac12\}$, then
\begin{align*}
|\Hh_N(\Delta)| & = -\Res_{u=q^{-1}}\left(\frac{\mathcal{G}_{\Delta}(u)}{u^{N+1}}\right) + \frac{1}{2\pi i} \oint_{\Gamma} \frac{\mathcal{G}_{\Delta}(u)}{u^{N+1}}\,du \\
& = q^{N-1}(q-1) \prod_{Q|\Delta} \frac{|Q|}{|Q|+1} + O_\Delta(1),
\end{align*}
which is the desired result.
\end{proof}

\begin{rem}
Note that the error terms in Lemmas \ref{pmSizeLem} and \ref{SizeLem} only depend on the number of prime divisors of $\Delta$. Thus, it follows that the error term in Proposition \ref{CoprimeProp} can be made independent of the prime $P$.
\end{rem}

Using Proposition \ref{CoprimeProp} and Lemma \ref{NiceTraceFormula}, together with the fact that
$$\frac{|P|}{|P|+1} = 1 - \frac{1}{|P|+1},$$
we immediately get that
\begin{align*}
    & MT^{\pm}(n,N) := - q^{-n/2} \sum_{\substack{d|n \\ 2|d}} \frac{n}{d} \sum_{\deg(P)=\frac{n}{d}}  a^*_{1,P^d} \frac{|\Hh^{\pm}_{N}(P\Delta)|}{|\Hh^{\pm}_{N}(\Delta)|} \\ 
    & \hspace{20pt} =  - q^{-n/2} \sum_{\substack{d|n \\ 2|d}} \frac{n}{d} \sum_{\deg(P)=\frac{n}{d}}  a^*_{1,P^d} + q^{-n/2} \sum_{\substack{d|n \\ 2|d}} \frac{n}{d} \sum_{\substack{\deg(P)=\frac{n}{d} \\ P\nmid\Delta }}  a^*_{1,P^d}\left(\frac{1}{|P|+1} + O\left(\frac{1}{q^{N/2}}\right)\right)\\
    & \hspace{20pt} = \eta_2(n) \left(1 + \frac{\Tr\big(\Theta^{n/2}_{sym^2E}\big)}{q^{n/4}} + \frac{\mathcal{D}(n)}{q^{n/2}}\right) + O\left(\frac{1}{q^{N/2}}\right),
\end{align*}
where 
\begin{multline}\label{DDef}
\mathcal{D}(n) := \sum_{\substack{d|n \\ 2|d}} \frac{n}{d} \sum_{\substack{\deg(P)=\frac{n}{d} \\ P\nmid \Delta }}  \frac{a^*_{1,P^d}}{|P|+1} \\
- \sum_{\substack{d|n \\ 2|d}} \frac{n}{d} \sum_{\substack{\deg(P)=\frac{n}{d} \\ P|\Delta }} \big(a^*_{1,P^d} - a^*_{2,P^{d/2}} +1\big) \ll \tau(n) + \deg(\Delta).
\end{multline}

\subsection{Bounding $ET^{\pm}(n,N)$}

To bound $ET^{\pm}(n,N)$, we refer to the work of Comeau-Lapointe \cite{C-L}. To align with the notation from \cite{C-L}, we define
$$\Hh_{N,C} = \{D \in \Hh_{N}(\Delta) : D\equiv C \bmod{N_E} \},$$
where $N_E$ is the conductor of the elliptic curve $E$ as defined in, e.g., \cite[Lecture 1]{Ulmer} (see also \cite[Section 2.1]{C-L}). In particular, with our notation, we have $\mathfrak{n}= \mathfrak{n}_E = \deg(N_E)-4$. 

Next, we define
$$S_C(n,N):= -\frac{n}{q^{n/2}|\Hh_{N,C}|} \sum_{\deg(P)=n} a^*_{1,P} \sum_{D\in\Hh_{N,C}} \left(\frac{D}{P}\right).$$
Then, \cite[Proposition 7.2]{C-L} shows that for any $\epsilon>0$, $N> 4\mathfrak{n}+16$ and $C$ coprime to $N_E$, we have
$$S_C(n,N) \ll_{\epsilon} (n+N)N^{2\mathfrak{n}+11}\left(\frac{1}{q^{N/8}} + \frac{1}{q^{\epsilon N}} + \frac{q^{n/2}}{q^{(1-\epsilon)N}}\right).$$
Moreover, as we saw in the proof of Lemma \ref{pmSizeLem}, as long as $P\not=M$,
\begin{align}\label{P!=M}
    \sum_{D\in\Hh^{\pm}_N(\Delta)} \left(\frac{D}{P}\right) = \frac{1}{2} \sum_{D\in\Hh_N(\Delta)} \left(1 \pm \epsilon_N \epsilon(E) \left(\frac{D}{M}\right)\right)\left(\frac{D}{P}\right)\ll q^{N/2},
\end{align}
and so as long as $M$ is not a prime of degree dividing $n$,
\begin{align*}
    ET^{\pm}(n,N) & = -\frac{ q^{-n/2}}{|\Hh^{\pm}_{N}(\Delta)|} \sum_{\substack{d|n \\ 2\nmid d}} \frac{n}{d} \sum_{\deg(P)=\frac{n}{d}} a^*_{1,P^d} \sum_{D\in\Hh^{\pm}_{N}(\Delta)} \left(\frac{D}{P}\right)\\
    & = \sum_{\substack{C \bmod{N_E} \\ (C,N_E)=1\\ \chi_C(M)=\pm\epsilon_N\epsilon(E) }} \frac{|\Hh_{N,C}|}{|\Hh_N^{\pm}(\Delta)|} S_C(n,N) + O\Bigg(\frac{ 1}{q^{(n+N)/2}} \sum_{\substack{d|n \\d\geq 3}} \frac{n}{d} \sum_{\deg(P)=\frac{n}{d}} |a^*_{1,P^d}|\Bigg)\\ 
    & = O_{\epsilon}\left((n+N)N^{2\mathfrak{n}+11}\left(\frac{1}{q^{N/8}} + \frac{1}{q^{\epsilon N}} + \frac{q^{n/2}}{q^{(1-\epsilon)N}}\right) + \frac{1}{q^{N/2+n/6}}\right).
\end{align*}

Finally, in the case that $M=P$, then by using \eqref{P!=M}, we see that 
\begin{align}\label{M=P}
    \sum_{D\in\Hh^{\pm}_N(\Delta)} \left(\frac{D}{P}\right) =  \pm\epsilon_N\epsilon(E) |\Hh^{\pm}_N(\Delta)| + O\big(q^{N/2}\big).
\end{align}
Therefore, if additionally $\deg(P)=\frac{n}{d}$ for some odd $d|n$, then this prime would contribute a term  
$$\mp\epsilon_N\epsilon(E) \frac{n}{d}\frac{ a^*_{1,P^d}}{q^{n/2}}$$
to $ET^{\pm}(n,N)$, which we could incorporate into the term $\mathcal{D}(n)q^{-n/2}$ from the previous subsection.

\subsection{Proof of Theorem \ref{QuadThm}}

We are now in position to complete the proof of Theorem \ref{QuadThm}.

\begin{proof}[Proof of Theorem \ref{QuadThm}]
Combining the results from the previous subsections, for any $\epsilon>0$ and $N> 4\mathfrak{n}+16$, we have 
\begin{align*}
    \left\langle \Tr\left(\Theta^n_{E_D}\right) \right\rangle_{\Hh^{\pm}_{N}(\Delta)} & = MT^{\pm}(n,N) +ET^{\pm}(n,N) \\
    & = \eta_2(n) \left(1 + \frac{\Tr\big(\Theta^{n/2}_{sym^2E}\big)}{q^{n/4}} +\frac{\mathcal{D}(n)}{q^{n/2}}\right) \\
    & + O_{\epsilon}\left(  (n+N)N^{2\mathfrak{n}+11}\left(\frac{1}{q^{N/8}} + \frac{1}{q^{\epsilon N}} + \frac{q^{n/2}}{q^{(1-\epsilon)N}}\right) + \frac{1}{q^{N/2+n/6}} \right).
\end{align*}
We may then absorb the term $q^{-N/2-n/6}$ into the other error terms which gives the desired result.
\end{proof}

\subsection{Proof of Corollary \ref{QuadCor}}\label{Subsection:QuadCor}

Recall that we have, for any unitary $N\times N$ matrix $U$,
\begin{align*}
\mathcal{D}(U,f) :=   \sum_{j=1}^{N} \sum_{n\in\Z} f\left(N\left(\frac{\theta_j}{2\pi} -n\right)\right) =  \frac{1}{N} \sum_{n\in\mathbb{Z}} \widehat{f}\left( \frac{n}{N} \right) \Tr(U^n),
\end{align*}
where the $\theta_j$ run over the eigenangles of the matrix $U$. In particular, since we know that 
\begin{align*}
\int_{\mathrm{O}(N)} \Tr(U^n)\,dU = 
\begin{cases} N & \text{if $n=0$}, \\ 
\eta_2(n) & \text{if $n\not=0$}, \end{cases}   
\end{align*} 
we get that
\begin{align*}
   \int_{\mathrm{O}(\mathfrak{n} + 2N)} \mathcal{D}(U,f)\,dU & =  \frac{1}{\mathfrak{n}+2N} \sum_{n\in\mathbb{Z}} \widehat{f}\left( \frac{n}{\mathfrak{n}+2N} \right) \int_{\mathrm{O}(\mathfrak{n}+2N)}\Tr(U^n)\,dU \\
   & = \widehat{f}(0) + \frac{2}{\mathfrak{n}+2N} \sum_{n=1}^{\infty} \widehat{f}\left(\frac{2n}{\mathfrak{n}+2N}\right).
\end{align*}

Now, using Theorem \ref{QuadThm} to average over the quadratic twist family, we find that if $\supp(\widehat{f}) \subset (-1+\delta,1-\delta)$ for some $\delta>0$, then
\begin{align}\label{1-level-firstexpression}
    \big\langle\mathcal{D}(\Theta_{E_D},f) \big\rangle_{\Hh_N^{\pm}(\Delta)} & =  \frac{1}{\mathfrak{n}_{E_D} } \sum_{n\in\mathbb{Z}} \widehat{f}\left( \frac{n}{\mathfrak{n}_{E_D}} \right) \big\langle \Tr(\Theta_{E_D}^n) \big\rangle_{\Hh_N^{\pm}(\Delta)} \nonumber\\
    &\hspace{-50pt} = \widehat{f}(0) + \frac{2}{\mathfrak{n}+2N}\sum_{n=1}^{(1-\delta)(\frac{\mathfrak{n}}{2}+N)}\widehat{f}\left( \frac{2n}{\mathfrak{n}+2N} \right)\left(1 + \frac{\Tr\big(\Theta^{n}_{sym^2E}\big)}{q^{n/2}} +\frac{\mathcal{D}(2n)}{q^{n}}\right) \\
    &\hspace{-50pt} + O_{\epsilon}\left( \sum_{n=1}^{(1-\delta)(\mathfrak{n}+2N)}  (n+N)N^{2\mathfrak{n}+10} \left(\frac{1}{q^{N/8}} + \frac{1}{q^{\epsilon N}} + \frac{q^{n/2}}{q^{(1-\epsilon)N}}\right)\right).\nonumber
\end{align}
We divide the right-hand side above into pieces that we analyze separately. 

Firstly, we note that the error term in \eqref{1-level-firstexpression} is bounded by
$$O_\epsilon\left(N^{2\mathfrak{n}+12}\left(\frac{1}{q^{N/8}} + \frac{1}{q^{\epsilon N}} + q^{(\epsilon-\delta)N}\right) \right) = O_\epsilon\left(\frac{1}{q^{\epsilon'N}}\right)$$
for some $\epsilon'>0$ as long as $\epsilon<\delta$. Next, we use the assumption that $\supp(\widehat{f})\subset (-1+\delta,1-\delta)$ to extend the sum to be over all positive $n$. Hence, we can write the main term in \eqref{1-level-firstexpression} as
\begin{multline}\label{O-integral-repr}
    \widehat{f}(0) +  \frac{2}{\mathfrak{n}+2N}\sum_{n=1}^{(1-\delta)(\frac{\mathfrak{n}}{2}+N)}\widehat{f}\left( \frac{2n}{\mathfrak{n}+2N} \right)\\
    = \widehat{f}(0) + \frac{2}{\mathfrak{n}+2N} \sum_{n=1}^{\infty} \widehat{f}\left(\frac{2n}{\mathfrak{n}+2N}\right) 
    = \int_{\mathrm{O}(\mathfrak{n}+2N)} \mathcal{D}(U,f)\,dU .
\end{multline}

For the secondary terms in \eqref{1-level-firstexpression}, we split the sum over $n$ into two parts. Let $\phi(N)$ be any function (to be determined later). Then, we use the fact that $f$ is a Schwartz function to get that $\widehat{f}(x+y) = \widehat{f}(x) + O(y)$ and so
\begin{align}\label{secondary-firstexpression}
   \sum_{n=1}^{\phi(N)}\widehat{f}\left( \frac{2n}{\mathfrak{n}+2N} \right) \frac{\Tr\big(\Theta^{n}_{sym^2E}\big)}{q^{n/2}} & = \sum_{n=1}^{\phi(N)}\left(\widehat{f}(0) + O\left(\frac{n}{N}\right)\right)\frac{\Tr\big(\Theta^{n}_{sym^2E}\big)}{q^{n/2}}\nonumber\\
   & = \widehat{f}(0) \sum_{n=1}^{\phi(N)} \frac{\Tr\big(\Theta^n_{sym^2E}\big)}{q^{n/2}} + O\left(\frac{\phi(N)}{N}\right).
\end{align}
Next, combining \eqref{amPkDef} and \eqref{SymmEllTraceForm}, we get that
$$\frac{L'(u,sym^2E)}{L(u,sym^2E)} = -\frac{1}{u}\sum_{n=1}^{\infty} \Tr\left(\Theta_{sym^2E}^n\right)\left(\sqrt{q}u\right)^n.$$
Therefore, extending the sum in \eqref{secondary-firstexpression} to be over all positive $n$, while gaining an additional error term of order $q^{-\phi(N)/2}$, we find that the main contribution from \eqref{secondary-firstexpression} equals
$$-\frac{\widehat{f}(0)}{q} \frac{L'(q^{-1},sym^2E)}{L(q^{-1},sym^2E)}.$$
For the remaining terms with $n>\phi(N)$, we use the fact that $\widehat{f}$ is bounded to get 
$$\sum_{\phi(N)<n\leq(1-\delta)(\frac{\mathfrak{n}}{2}+N)}\widehat{f}\left( \frac{2n}{\mathfrak{n}+2N} \right) \frac{\Tr\big(\Theta^{n}_{sym^2E}\big)}{q^{n/2}} \ll q^{-\phi(N)/2}.$$
Combining the above observations, we set $\phi(N)=N^{\epsilon}$ and conclude that 
\begin{align*}
    \frac{2}{\mathfrak{n}+2N}\sum_{n=1}^{(1-\delta)(\frac{\mathfrak{n}}{2}+N)}\widehat{f}\left( \frac{2n}{\mathfrak{n}+2N} \right) \frac{\Tr\big(\Theta^{n}_{sym^2E}\big)}{q^{n/2}}
    = -\frac{\widehat{f}(0)}{N}\frac{L'(q^{-1},sym^2E)}{qL(q^{-1},sym^2E)} + O_{\epsilon}\left(\frac{1}{N^{2-\epsilon}}\right).
\end{align*}

Finally, we consider also the remaining secondary term in \eqref{1-level-firstexpression}. Similarly as in the treatment of the first secondary term above, we get that
\begin{align*}
    &\frac{2}{\mathfrak{n}+2N} \sum_{n=1}^{(1-\delta)(\frac{\mathfrak{n}}{2}+N)} \widehat{f}\left(\frac{2n}{\mathfrak{n}+2N}\right) \frac{\mathcal{D}(2n)}{q^n} \\
    & = \frac{\widehat{f}(0)}{N} \sum_{n=1}^{\infty}\sum_{\substack{d|2n\\ 2|d }} \Bigg( \sum_{\substack{\deg(P)=\frac{2n}{d}\\ P\nmid \Delta}} \frac{\deg(P)a^*_{1,P^{d}}}{q^n(|P|+1)} \\ 
    & \hspace{116pt}- \sum_{\substack{\deg(P) = \frac{2n}{d} \\ P|\Delta }} \frac{\deg(P) \big(a^*_{1,P^{d}}-a^*_{2,P^{d/2}}+1\big)}{q^n}  \Bigg) + O_{\epsilon}\left(\frac{1}{N^{2-\epsilon}}\right)\\
    & = \frac{\widehat{f}(0)}{N}\Bigg(\sum_{P \nmid \Delta} \frac{\deg(P)}{|P|+1} \sum_{d=1}^{\infty}\frac{a^*_{1,P^{2d}}}{|P|^d} - \sum_{P|\Delta} \deg(P) \sum_{d=1}^{\infty} \frac{a^*_{1,P^{2d}} - a^*_{2,P^d}+1}{|P|^d} \Bigg)+ O_{\epsilon}\left(\frac{1}{N^{2-\epsilon}}\right).
\end{align*}
This concludes the proof of Corollary \ref{QuadCor}.

\section{Cubic twists}\label{Section:cubic}

In this section, we prove Theorems \ref{CubicOLD} and \ref{CubicLowTerm}.

\subsection{A formula for $a_P(\widetilde{E})$}\label{Subsection:cubiccoeff}

Recall that we denote by $\widetilde{E}$ an elliptic curve given by the equation
\begin{align}\label{E-tilde-Eq}
    \widetilde{E}: y^2 = x^3+B,
\end{align}
where $B\in\Ff_q[T]$, and that for every $D\in\mathcal{F}_{N}(B)$, we consider the cubic twist
$$\widetilde{E}_D: y^2=x^3+BD^2.$$
For any elliptic curve $\widetilde{E}$ of the above form, and any prime $P$, we define
$$\lambda_P = \lambda_P(\widetilde{E}) := \frac{1}{q^{\deg(P)/2}}\sum_{F\bmod P} \left(\frac{F^2-B}{P}\right)_3,$$
where $\left(\frac{\cdot}{P}\right)_3$ is the cubic residue symbol on $\Ff_q[T]/(P) \cong \Ff_{q^{\deg(P)}}$. Note in particular that the Weil bound implies that $|\lambda_P| \leq 1$.

Similar to the case of quadratic twists, we will use the fact that $a_P(\widetilde{E}_D)$ can be expressed in terms of the number of points of $\widetilde{E}_D(P)$ (as a character sum) and then use this information to understand how these coefficients change as we vary $D$.

\begin{lem}\label{Lem:cubictwistcoeff}
For any elliptic curve $\widetilde{E}$ given by an equation of the form \eqref{E-tilde-Eq}, prime $P$ and $D\in\mathcal{F}_{N}(B)$, we have
\begin{align}\label{aPEtilde}
    a_P(\widetilde{E}) = -(\lambda_P + \overline{\lambda}_P)
\end{align}
and 
\begin{align}\label{LambdaDForm}
    \lambda_P(\widetilde{E}_D) = \left(\frac{D}{P}\right)_3^2 \lambda_P(\widetilde{E}).
\end{align}
\end{lem}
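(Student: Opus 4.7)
The plan is to mirror the proof of Lemma \ref{NumPtsLem} from the quadratic case, replacing the quadratic residue symbol (and the count of square roots) by the cubic residue symbol (and the count of cube roots). The key input is that, under the standing hypothesis $q\equiv 1\bmod 6$, for any $c\in\Ff_q[T]/(P)$ the equation $x^3\equiv c\bmod P$ has exactly
\begin{equation*}
1 + \left(\tfrac{c}{P}\right)_3 + \overline{\left(\tfrac{c}{P}\right)_3}
\end{equation*}
solutions in $\Ff_q[T]/(P)$; this formula is valid uniformly in $c$, including at $c=0$, once we adopt the convention $(0/P)_3 = 0$.

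For \eqref{aPEtilde} I would compute $\#\widetilde E(P)$ at a good-reduction prime $P\nmid B$ by fibering over the $y$-coordinate and applying the root count above with $c = y^2 - B$. Summing over $y\bmod P$ and adding $1$ for the point at infinity yields
\begin{equation*}
\#\widetilde E(P) = q^{\deg P} + 1 + \sum_{y\bmod P}\left(\tfrac{y^2-B}{P}\right)_3 + \sum_{y\bmod P}\overline{\left(\tfrac{y^2-B}{P}\right)_3} = q^{\deg P} + 1 + q^{\deg(P)/2}\bigl(\lambda_P + \overline{\lambda}_P\bigr),
\end{equation*}
and comparison with the defining relation $\#\widetilde E(P) = q^{\deg P} + 1 - a_P\, q^{\deg(P)/2}$ gives \eqref{aPEtilde}. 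The bad-reduction primes $P\mid B$ would be treated separately, noting that $\lambda_P=0$ in that case by orthogonality of the non-trivial cubic character on $\Ff_{q^{\deg P}}^*$, and reconciling with the chosen convention for $a_P$ at such primes.

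For \eqref{LambdaDForm} I would split into the cases $P\nmid D$ and $P\mid D$. In the first case, the map $G\mapsto DG$ is a bijection of $\Ff_q[T]/(P)$, and multiplicativity of the cubic residue symbol gives
\begin{equation*}
\lambda_P(\widetilde E_D) = \frac{1}{q^{\deg(P)/2}}\sum_{G\bmod P}\left(\frac{D^2(G^2-B)}{P}\right)_3 = \left(\frac{D}{P}\right)_3^2\,\lambda_P(\widetilde E).
\end{equation*}
In the second case we necessarily have $P\nmid B$ (by the coprimality $(D,B)=1$), the right-hand side vanishes because $(D/P)_3=0$, and the left-hand side becomes $q^{-\deg(P)/2}\sum_{F\bmod P}(F^2/P)_3$, which via $(F^2/P)_3 = \overline{(F/P)_3}$ reduces to $q^{-\deg(P)/2}\sum_{F\not\equiv 0}\overline{(F/P)_3} = 0$ by orthogonality.

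The only real obstacle is a mild amount of bookkeeping: verifying the root-count formula uniformly at $c=0$, and handling the two degenerate situations ($P\mid B$ in \eqref{aPEtilde} and $P\mid D$ in \eqref{LambdaDForm}), both of which collapse to the same orthogonality relation for the cubic character. Otherwise the argument is a direct character-sum computation completely parallel to the quadratic-twist case.
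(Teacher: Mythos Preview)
Your proposal is correct and follows essentially the same route as the paper: both count points on $\widetilde E(P)$ by fibering over the $y$-coordinate and using the cubic residue symbol to count cube roots of $y^2-B$, and both handle \eqref{LambdaDForm} by splitting into $P\nmid D$ (change of variables plus multiplicativity) and $P\mid D$ (orthogonality of the cubic character). The only minor difference is that you single out the case $P\mid B$ in \eqref{aPEtilde} for separate treatment, whereas the paper's character-sum computation is written uniformly over all primes; this is harmless extra caution on your part.
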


\begin{proof}
Recall that 
$$\#\widetilde{E}(P) = q^{\deg(P)}+1 -a_P(\widetilde{E}) q^{\deg(P)/2}.$$
We also know from our discussion in Section \ref{Section:quadratic} that there will always be exactly one point lying above $\infty$ on $\widetilde{E}(P)$. Moreover, since our curves have the form \eqref{E-tilde-Eq}, we observe that for a prime $P$ and a finite point  $F\in\Ff_q[T]/(P)$, the number of points lying over $F$ on $\widetilde{E}(P)$ is
$$\begin{cases}
3 & \text{if $F^2-B$ is a non-zero perfect cube mod $P$,}\\
1 & \text{if $F^2-B\equiv 0$ mod $P$,}\\
0 & \text{if $F^2-B$ is not a perfect cube mod $P$.}
\end{cases}$$
We capture this information in the character sum
\begin{align*}
    \#\widetilde{E}(P) & =1+ \sum_{F \bmod{P}}\left(1 + \left(\frac{F^2-B}{P}\right)_3 + \left(\frac{F^2-B}{P}\right)^2_3\right) \\
    &= q^{\deg(P)}+1 + \sum_{F \bmod{P}}\left( \left(\frac{F^2-B}{P}\right)_3 + \left(\frac{F^2-B}{P}\right)^2_3\right)\\
    & = q^{\deg(P)}+1 + (\lambda_P+\overline{\lambda}_P)q^{\deg(P)/2},
\end{align*}
which proves \eqref{aPEtilde}.

To prove \eqref{LambdaDForm}, we first consider the case when $P\nmid D$. Then $D$ is invertible modulo $P$ and so
\begin{align*}
    \lambda_P(\widetilde{E}_D) &= \frac{1}{q^{\deg(P)/2}} \sum_{F\bmod{P}}\left(\frac{F^2-BD^2}{P}\right)_3 \\
    & = \left(\frac{D^2}{P}\right)_3 \frac{1}{q^{\deg(P)/2}} \sum_{F\bmod{P}}\left(\frac{(FD^{-1})^2-B}{P}\right)_3\\
    & = \left(\frac{D}{P}\right)^2_3 \lambda_P(\widetilde{E}),
\end{align*}
where the last equality comes from the fact that as $F$ runs over all the elements mod $P$ so does $FD^{-1}$. Finally, if $P|D$, then we get 
\begin{align*}
    \lambda_P(\widetilde{E}_D) &= \frac{1}{q^{\deg(P)/2}} \sum_{F\bmod{P}}\left(\frac{F}{P}\right)^2_3=0,
\end{align*}
which clearly equals $\left(\frac{D}{P}\right)^2_3 \lambda_P(\widetilde{E})$ in this case.
\end{proof}

It is tempting to try to conclude from \eqref{aPEtilde} that $\lambda_P=-\alpha_P$. This is true if and only if $|\lambda_P|=1$. However, we will see that the expected value of $|\lambda_P|^2$, for primes $P$ of large degree, is $\frac{1}{2}$ (cf.\ Corollary \ref{maincor}) and so $\lambda_P$ is in general not equal to $-\alpha_P$. Therefore, it is not necessarily true that $\alpha_P(\widetilde{E}_D) = \left(\frac{D}{P}\right)_3^2 \alpha_P(\widetilde{E})$. This causes some minor issues when we calculate the expected values of traces of the Frobenius, as we need to first write everything in terms of $\lambda_P$ instead of the more natural $\alpha_P$. The following lemma presents the essential parts of this reformulation.

\begin{lem}\label{atolambda}
If $P$ is a prime of good reduction for $\widetilde E$, then we have the following:
\begin{enumerate}
    \item $a^*_{1,P} = -(\lambda_P+\overline{\lambda}_P)$,
    \item $a^*_{1,P^2} = \lambda_P^2+\overline{\lambda}_P^2 + 2(|\lambda_P|^2-1)$,
    \item $a^*_{1,P^3} = -(\lambda_P^3 + \overline{\lambda}_P^3) - 3(|\lambda_P|^2-1)(\lambda_P+\overline{\lambda}_P)$.
\end{enumerate}
\end{lem}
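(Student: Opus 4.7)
The plan is to reduce everything to simple symmetric-function identities in the roots $\alpha_P,\beta_P$ of $1-a_Pu+u^2$. Since $P$ is a prime of good reduction, equation \eqref{amPkform} gives $a^*_{1,P^k}=\alpha_P^k+\beta_P^k$, while \eqref{alphaDef} gives the two basic relations
\begin{equation*}
\alpha_P+\beta_P=a_P,\qquad \alpha_P\beta_P=1.
\end{equation*}
Combining the first of these with \eqref{aPEtilde} from Lemma \ref{Lem:cubictwistcoeff} yields $\alpha_P+\beta_P=-(\lambda_P+\overline{\lambda}_P)$, and this is the only input from the cubic-twist setup that is needed.

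With this in hand, each part becomes a short computation. Part (1) is immediate: $a^*_{1,P}=\alpha_P+\beta_P=-(\lambda_P+\overline{\lambda}_P)$. For part (2), I would expand
\begin{equation*}
a^*_{1,P^2}=\alpha_P^2+\beta_P^2=(\alpha_P+\beta_P)^2-2\alpha_P\beta_P=(\lambda_P+\overline{\lambda}_P)^2-2,
\end{equation*}
and then note $(\lambda_P+\overline{\lambda}_P)^2=\lambda_P^2+\overline{\lambda}_P^2+2|\lambda_P|^2$, so that the right side becomes $\lambda_P^2+\overline{\lambda}_P^2+2(|\lambda_P|^2-1)$ as claimed. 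For part (3) I would similarly use the Newton-type identity
\begin{equation*}
\alpha_P^3+\beta_P^3=(\alpha_P+\beta_P)^3-3\alpha_P\beta_P(\alpha_P+\beta_P)=-(\lambda_P+\overline{\lambda}_P)^3+3(\lambda_P+\overline{\lambda}_P),
\end{equation*}
then expand $(\lambda_P+\overline{\lambda}_P)^3=\lambda_P^3+\overline{\lambda}_P^3+3|\lambda_P|^2(\lambda_P+\overline{\lambda}_P)$ and collect terms to obtain $-(\lambda_P^3+\overline{\lambda}_P^3)-3(|\lambda_P|^2-1)(\lambda_P+\overline{\lambda}_P)$.

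There is no real obstacle here; the content of the lemma is simply a reorganization of the power sums $\alpha_P^k+\beta_P^k$ (which are naturally written using $a_P=-(\lambda_P+\overline{\lambda}_P)$) into the power sums $\lambda_P^k+\overline{\lambda}_P^k$, with the discrepancy $|\lambda_P|^2-1$ tracking the failure of $\lambda_P$ to equal $-\alpha_P$. The only reason the statement is worth recording is that on twisting by $D\in\F_N(B)$, Lemma \ref{Lem:cubictwistcoeff} shows that $\lambda_P$ is multiplied by the tame factor $(D/P)_3^2$ while $(|\lambda_P|^2-1)$ is invariant; thus the decomposition above cleanly separates the ``essentially constant'' part of $a^*_{1,P^k}(\widetilde{E}_D)$ from the part that fluctuates as $D$ varies, which is what will be exploited in the proof of Theorem \ref{CubicLowTerm}.
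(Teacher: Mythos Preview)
Your proof is correct and follows essentially the same approach as the paper: both use $a^*_{1,P^k}=\alpha_P^k+\beta_P^k$, the Newton-type identities in $\alpha_P+\beta_P$ and $\alpha_P\beta_P=1$, and the substitution $\alpha_P+\beta_P=-(\lambda_P+\overline{\lambda}_P)$ from \eqref{aPEtilde}, then expand $(\lambda_P+\overline{\lambda}_P)^k$ to extract the $|\lambda_P|^2$ terms. Your closing paragraph explaining why this decomposition is useful (the invariance of $|\lambda_P|^2-1$ under twisting) is a nice addition not spelled out in the paper's proof.
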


\begin{proof}
For $(1)$, we have that
$$a^*_{1,P} = a_{P} = -(\lambda_P+\overline{\lambda}_P).$$
To prove $(2)$, we note that
\begin{align*}
    a^*_{1,P^2} & = \alpha_P^2+\beta_P^2 = (\alpha_P+\beta_P)^2- 2 
    = (\lambda_P+\overline{\lambda}_P)^2-2  = \lambda_P^2+\overline{\lambda}_P^2 + 2(|\lambda_P|^2-1).
\end{align*}
Finally, for $(3)$, we have that
\begin{align*}
    a^*_{1,P^3} & = \alpha_P^3+\beta_P^3 = (\alpha_P+\beta_P)^3-3(\alpha_P+\beta_P) = -(\lambda_P+\overline{\lambda}_P)^3+3(\lambda_P+\overline{\lambda}_P) \\
    & = -(\lambda^3_P+\overline{\lambda}^3_P)-3(|\lambda_P|^2-1)(\lambda_P+\overline{\lambda}_P),
\end{align*}
which concludes the proof.
\end{proof}

\subsection{Trace formulas}

Applying Lemma \ref{TruncTrForm} with $m=2$ and using Lemmas \ref{RHbound} and \ref{atolambda}, we obtain
\begin{multline*}
    -q^{n/2}\Tr(\Theta_{\widetilde{E}}^n) = n\sum_{\deg(P)=n}a^*_{1,P} + \frac{n}{2}\sum_{\deg(P)=\frac{n}{2}} a^*_{1,P^2} +  O\Bigg(q^{n/6} +  n\sum_{\substack{d|n \\ d>2}} \deg(\Delta_{n/d})\Bigg) \\
     = -\eta_2(n)q^{n/2} -n \sum_{\deg(P)=n} (\lambda_P + \overline{\lambda}_P) +O\bigg(\eta_2(n)nq^{n/4}+q^{n/6} +  n\sum_{d|n} \deg(\Delta_{n/d})\bigg).
\end{multline*}
Therefore, if for every prime $P$, we define
\begin{align}\label{EP}
    E_P = E_P(N) := \frac{1}{|\F_N(B)|}\sum_{D\in\F_N(B)} \left(\frac{D}{P}\right)_3,
\end{align}
then we get
\begin{multline}\label{CubicTrForm1}
    \big\langle\Tr(\Theta_{\widetilde{E}_D}^n)\big\rangle_{\F_N(B)} = \eta_2(n) + \frac{n}{q^{n/2}} \sum_{\deg(P)=n} \left(\lambda_P\overline{E}_P +\overline{\lambda}_PE_P\right) \\ + O\left(\frac{\eta_2(n)n}{q^{n/4}}+\frac{1}{q^{n/3}} + \frac{n(\deg(\Delta) + \tau(n))}{q^{n/2}} \right),
\end{multline}
where we have used Lemma \ref{BadPrimeBound} to bound the contribution from the primes of bad reduction dividing elements in $\F_N(B)$. 

On the other hand, if we apply Lemma \ref{TruncTrForm} with $m=3$ instead of $m=2$, together with Lemma \ref{atolambda}, then we get a different trace formula:
\begin{multline*}
    -q^{n/2}\Tr(\Theta_{\widetilde{E}}^n) =  -n\sum_{\deg(P)=n} \left(\lambda_P+\overline{\lambda}_P\right) + \frac{n}{2} \sum_{\deg(P)=\frac{n}{2}} \left(\lambda_P^2+\overline{\lambda}_P^2+2(|\lambda_P|^2-1)\right) \\
    - \frac{n}{3} \sum_{\deg(P)=\frac{n}{3}} \left( \lambda_P^3+\overline{\lambda}_P^3 +3(|\lambda_P|^2-1)(\lambda_P+\overline{\lambda}_P) \right) + O\bigg(q^{n/8} + n\sum_{d|n} \deg(\Delta_{n/d}) \bigg).
\end{multline*}
Now, we note that as long as $P\nmid D$, we have $|\lambda_P(\widetilde{E}_D)|^2 = |\lambda_P(\widetilde{E})|^2$ and $\lambda^3_P(\widetilde{E}_D) = \lambda^3_P(\widetilde{E})$. Therefore, if we define
\begin{align}
    M(n,N) := -\frac{n/2}{q^{n/2}}  \sum_{\deg(P)=\frac{n}{2} } 2(|\lambda_P|^2-1)  \frac{|\F_N(PB)|}{|\F_N(B)|}, 
\end{align}
\begin{align}
    S(n,N)  := \frac{n/3}{q^{n/2}}  \sum_{ \deg(P)=\frac{n}{3}} \big(\lambda^3_P + \overline{\lambda}^3_P\big) \frac{|\F_N(PB)|}{|\F_N(B)|}, 
\end{align}
\begin{multline}
    E(n,N):= \frac{n}{q^{n/2}} \sum_{\deg(P)=n} \big(\lambda_P\overline{E}_P+\overline{\lambda}_PE_P\big)  \\ - \frac{n/2}{q^{n/2}} \sum_{\deg(P)=\frac{n}{2}} \ \big(\lambda_P^2E_P+\overline{\lambda}^2_P\overline{E}_P\big)  + 
    \frac{n/3}{q^{n/2}} \sum_{\deg(P)=\frac{n}{3}} \ 3(|\lambda_P|^2-1)\big(\lambda_P\overline{E}_P+\overline{\lambda}_PE_P\big),  
\end{multline}
with each $\lambda_P = \lambda_P(\widetilde{E})$, then we have
\begin{align}\label{CubicTrForm2}
    \big\langle\Tr(\Theta_{\widetilde{E}_D}^n)\big\rangle_{\F_N(B)} = M(n,N) + S(n,N) + E(n,N) + O\left(\frac{1}{q^{3n/8}} + \frac{n(\deg(\Delta)+\tau(n))}{q^{n/2}}\right),
\end{align}
where we use the same bounds as in \eqref{CubicTrForm1}, together with Remark \ref{Sec2Rem}, to handle the primes of bad reduction.

Hence, in order to prove Theorems \ref{CubicOLD} and \ref{CubicLowTerm}, we need to bound $E_P$ and compute $\frac{|\F_N(PB)|}{|\F_N(B)|}$, which we can view as the probability that a random $D\in\F_N(B)$ is coprime to $P$.

\subsection{Bounding $E_P(N)$}

In this short subsection, we prove the following bound on $E_P(N)$.

\begin{prop}\label{EPprop}
For any $\epsilon>0$ and any prime $P$, we have
$$E_P(N) \ll_{\epsilon} \frac{e^{2\deg(P)}}{Nq^{(\frac{1}{2}-\epsilon)N}}.$$
\end{prop}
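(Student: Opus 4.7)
The plan is to realize $|\F_N(B)|\,E_P(N)=\sum_{D\in\F_N(B)}\chi_P(D)$, with $\chi_P:=\left(\frac{\cdot}{P}\right)_3$, as a contour integral and exploit the analytic properties of the Dirichlet $L$-functions of cubic characters modulo $P$. First, I would introduce the two-variable generating series
\begin{equation*}
G(u,v):=\prod_{Q\nmid PB}\Big(1+\chi_P(Q)(uv)^{\deg Q}+\chi_P(Q)^2(uv^2)^{\deg Q}\Big),
\end{equation*}
where $u$ tracks $\deg\rad(D)$ and $v$ tracks $\deg D$. Averaging $G(u,\omega^j)$ over $j=0,1,2$, with $\omega=e^{2\pi i/3}$, selects $3\mid\deg D$, and Cauchy's formula then gives
\begin{equation*}
|\F_N(B)|\,E_P(N)=\frac{1}{3}\sum_{j=0}^{2}\frac{1}{2\pi i}\oint_{|u|=r}\frac{G(u,\omega^j)}{u^{N+1}}\,du.
\end{equation*}

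The next step is to factor each $G(u,\omega^j)$ as $L(\omega^j u,\chi_P)\,L(\omega^{2j}u,\chi_P^2)\,H_j(u)$, where $H_j$ is a ``tame'' Euler product whose local factors are $1+O((u^{\deg Q})^2)$. The key algebraic identity is
\begin{equation*}
(1+aX+a^2X)(1-aX)(1-a^2X)=1+(1-(a+a^2)^2)X^2+(a+a^2)X^3,
\end{equation*}
so the linear term cancels when $a^3=1$. By cubic reciprocity (valid since $q\equiv 1\pmod 6$) and Weil's theorem, $L(u,\chi_P)$ and $L(u,\chi_P^2)$ are polynomials in $u$ of degree at most $\deg(P)-1$ whose zeros lie on $|u|=q^{-1/2}$; consequently, on $|u|=q^{-1/2+\epsilon}$ one has $|L(u,\chi_P^{\,k})|\leq(1+q^{\epsilon})^{\deg P}\leq e^{O(\deg P)}$. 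Combined with an estimate $|H_j|\leq e^{O(\deg B)}$ on a suitable circle, this gives a uniform bound $|G(u,\omega^j)|\leq e^{O(\deg P+\deg B)}$.

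Shifting the contour to (or just inside) $|u|=q^{-1/2+\epsilon}$ crosses no poles, since the non-triviality of $\chi_P$ kills any pole at $u=q^{-1}$. A trivial length-times-max bound of the integral gives $|[u^N]G(u,\omega^j)|\ll e^{O(\deg P+\deg B)}\,q^{(1/2-\epsilon)N}$, and dividing by $|\F_N(B)|\asymp q^N$ already produces the target bound except for the factor $1/N$. Recovering $1/N$ is the main obstacle, and the delicate technical point of the proof. My plan for this is to use the explicit expansion
\begin{equation*}
\log L(u,\chi_P)=\sum_{k\geq1}\frac{u^k}{k}\sum_{\rho}\rho^{-k},
\end{equation*}
where $\rho$ ranges over the $O(\deg P)$ inverse zeros of modulus $q^{1/2}$, which endows the coefficients of $\log L$ with an intrinsic $1/k$. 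Tracking this factor through exponentiation and through the convolution against the Taylor series of $H_j$ (whose coefficients decay sufficiently fast in a disk close to $|u|=q^{-1/2}$) should preserve a $1/N$ in the leading term of $[u^N]G(u,\omega^j)$. The analytic handling of $H_j$ on the boundary of its natural disk of convergence is subtle because $\sum_Q q^{-\deg Q}$ diverges, so the plan is to work with $\log H_j$ term by term, grouping contributions by $\deg Q$ and using PNT to control each slice; this gives the required bounds on coefficients without needing analyticity in a strictly larger disk.
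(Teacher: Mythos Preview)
Your setup is essentially the paper's: encode the sum as a contour integral of a generating series, split via cube roots of unity to enforce $3\mid\deg D$, factor each piece as $L(\xi_3^j u,\chi_P)L(\xi_3^{2j}u,\chi_P^2)$ times a tame Euler product $H_j$, and estimate on a circle near $|u|=q^{-1/2}$.

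The gap is your claim that $|\F_N(B)|\asymp q^N$. In fact $|\F_N(B)|\asymp Nq^N$. Each $D\in\F_N(B)$ factors uniquely as $D_1D_2^2$ with $D_1,D_2$ monic, squarefree, coprime to each other and to $B$, and $\deg(D_1)+\deg(D_2)=N$; summing over this split produces the extra linear factor $N$. Equivalently, the generating function $\sum_N|\F_N(B)|u^N$ has a \emph{double} pole at $u=q^{-1}$, because both the $D_1$ and the $D_2$ sum contribute a $\zeta_q$-type factor; the paper records the resulting asymptotic as Corollary~\ref{FNBsize}. Once you divide your coefficient bound by the correct denominator $Nq^N$, the factor $1/N$ appears for free and the proof is finished. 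Your entire final paragraph is therefore unnecessary, and the mechanism you sketch there would not have worked in any case: the $1/k$ in the Taylor coefficients of $\log L$ does not survive exponentiation in a way that yields a $1/N$ in $[u^N]G$, and pushing the contour out to $|u|=q^{-1/2+\epsilon}$ as you propose is not legitimate since $H_j$ is only guaranteed analytic for $|u|<q^{-1/2}$ (as you yourself note, $\sum_Q q^{-\deg Q}$ diverges). The paper stays on $|u|=q^{-1/2-\epsilon}$, where $H_j=O_{B,\epsilon}(1)$ and the two $L$-factors together are $O(e^{2\deg P})$ via a bound of Lumley, giving $\sum_{D\in\F_N(B)}\chi_P(D)\ll_\epsilon e^{2\deg P}q^{(1/2+\epsilon)N}$; division by $Nq^N$ then yields the proposition. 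Cubic reciprocity is not needed: $\chi_P$ and $\chi_P^2$ are already nontrivial characters modulo $P$, so Weil applies directly.
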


\begin{proof}
We denote the cubic residue symbol modulo $P$ by
$$\psi_P = \left(\frac{\cdot}{P}\right)_3$$
and consider the generating series
$$\mathcal{G}(u,\psi_P) := \sum_{N=0}^{\infty} \sum_{D\in\F_N(B)} \psi_P(D) u^N.$$
Every $D\in\F_N(B)$ can be written as $D=D_1D_2^2$, where $\deg(D_1D_2)=N$, $3|\deg(D_1D_2^2)$ and $D_1,D_2$ are monic, square-free, coprime to each other and coprime to $B$. Using this, we obtain that 
\begin{align*}
    \mathcal{G}(u,\psi_P) & = \frac{1}{3} \sum_{\substack{D_1,D_2 \\ (D_1D_2,B)=1 \\  }} \mu^2(D_1D_2)\psi_P(D_1D_2^2)\left(1+\xi_3^{\deg(D_1D_2^2)} + \xi_3^{2\deg(D_1D_2^2)} \right)u^{\deg(D_1D_2)} \\
    & = \frac{1}{3} \big(H_0(u,\psi_P) + H_1(u,\psi_P) + H_2(u,\psi_P) \big),
\end{align*}
where $\xi_3$ is a primitive cube root of unity and 
\begin{align*}
    H_j(u,\psi_P) := \sum_{\substack{D_1,D_2 \\ (D_1D_2,B)=1 \\  }} \mu^2(D_1D_2)\psi_P(D_1D_2^2)\xi_3^{j\deg(D_1D_2^2)}u^{\deg(D_1D_2)}.
\end{align*}
Writing $H_j(u,\psi_P)$ as a product over primes, we get 
\begin{align*}
    H_j(u,\psi_P) & = \prod_{Q\nmid B} \left(1 + \psi_P(Q)(\xi_3^ju)^{\deg(Q)} + \psi^2_P(Q)(\xi_3^{2j}u)^{\deg(Q)}\right)\\
    & = L(\xi^j_3u,\psi_P)L(\xi_3^{2j}u,\psi^2_P) \widetilde{H}_j(u,\psi_P),
\end{align*}
where $L(u,\psi_P)$ is the $L$-function attached to the Dirichlet character $\psi_P$ and  $\widetilde{H}_j(u,\psi_P)$ is a function that has an Euler product with factors of the form $\left(1+O(u^{2\deg(Q)})\right)$ for all $Q\nmid B$ (respectively, $\left(1+O(u^{\deg(Q)})\right)$ for $Q\mid B$), and so is analytic in the region $|u|<q^{-1/2}$. 

Now, since $\psi_P$ and $\psi_P^2$ are both non-trivial Dirichlet characters modulo $P$, we get that $H_j(u,\psi_P)$ is analytic in the region $|u|<q^{-1/2}$ and hence so is $\mathcal{G}(u,\psi_P)$. Therefore, if $\Gamma := \{u : |u|=q^{-1/2-\epsilon}\}$, then we use \cite[Proposition 1.2]{Lumley}\footnote{Note that Lumley \cite{Lumley} is assuming that $q\equiv 1 \bmod{4}$. However, this assumption is not important for the proof of \cite[Proposition 1.2]{Lumley} and the same result holds also in the case $q\equiv 3 \bmod{4}$.} to get
\begin{align*}
    \sum_{D\in\F_N(B)}\left(\frac{D}{P}\right)_3 = \frac{1}{2\pi i} \oint_{\Gamma} \frac{\mathcal{G}(u,\psi_P)}{u^{N+1}}du  \ll \max_{u\in \Gamma} \left(\frac{|\mathcal{G}(u,\psi_P)|}{|u|^{N}}\right)  \ll_{\epsilon} e^{2\deg(P)}q^{(\frac{1}{2}+\epsilon)N}.
\end{align*}
To conclude, we refer to Corollary \ref{FNBsize} below which implies that $|\F_N(B)| \sim  cNq^{N}$ for some non-zero constant $c$.
\end{proof}

\subsection{Proof of Theorem \ref{CubicOLD}}

Using the results from the previous subsections, we are now ready to complete the proof of Theorem \ref{CubicOLD}.

\begin{proof}[Proof of Theorem \ref{CubicOLD}]
Applying Proposition \ref{EPprop} to \eqref{CubicTrForm1}, we get
\begin{align*}
    \big\langle\Tr(\Theta_{\widetilde{E}_D}^n)\big\rangle_{\F_N(B)} & = \eta_2(n) + \frac{n}{q^{n/2}} \sum_{\deg(P)=n} \left(\lambda_P\overline{E}_P +\overline{\lambda}_PE_P\right) \\
    &\hspace{95pt}+ O\left(\frac{\eta_2(n)n}{q^{n/4}}+\frac{1}{q^{n/3}} + \frac{n(\deg(\Delta) + \tau(n))}{q^{n/2}} \right)\\
    & = \eta_2(n) + O_{\epsilon}\left(\frac{q^{n/2}e^{2n}}{Nq^{(\frac{1}{2}-\epsilon)N}} + \frac{\eta_2(n)n}{q^{n/4}}+\frac{1}{q^{n/3}} + \frac{n(\deg(\Delta) + \tau(n))}{q^{n/2}}\right).
\end{align*}
Hence, if $\supp(\widehat{f}) \subset \left(-\alpha,\alpha \right)$  for some $\alpha<\frac{1}{2} - \frac{2}{4+\log q}$, we use \eqref{Poisson} to get 
\begin{align*}
    \big\langle \mathcal{D}(\Theta_{\widetilde{E}_D},f) \big\rangle_{\F_N(B)} = & \frac{1}{\mathfrak{n}+2N} \sum_{n\in\mathbb{Z}} \widehat{f}\left(\frac{n}{\mathfrak{n}+2N}\right) \big\langle\Tr(\Theta_{\widetilde{E}_D}^n)\big\rangle_{\F_N(B)} \\
    & \hspace{-50pt} = \widehat{f}(0) + \frac{2}{\mathfrak{n}+2N} \sum_{n=1}^{\alpha(\mathfrak{n}+2N)} \widehat{f}\left(\frac{n}{\mathfrak{n}+2N}\right) \big\langle\Tr(\Theta_{\widetilde{E}_D}^n)\big\rangle_{\F_N(B)} \\
     & \hspace{-50pt} = \widehat{f}(0) + \frac{2}{\mathfrak{n}+2N} \sum_{n=1}^{\alpha(\frac{\mathfrak{n}}{2}+N)} \widehat{f}\left(\frac{2n}{\mathfrak{n}+2N}\right)\\
      & \hspace{-50pt} + O_{\epsilon}\left(\frac{1}{N}\sum_{n=1}^{\alpha(\mathfrak{n}+2N)} \left(\frac{q^{n/2}e^{2n}}{Nq^{(\frac{1}{2}-\epsilon)N}} + \frac{\eta_2(n)n}{q^{n/4}}+\frac{1}{q^{n/3}} + \frac{n(\deg(\Delta) + \tau(n))}{q^{n/2}}\right)\right) \\ 
     & \hspace{-50pt} = \int_{\mathrm{O}(\mathfrak{n}+2N)} \mathcal{D}(U,f)\,dU + O\left(\frac{1}{N}\right),
\end{align*}
where we fix a sufficiently small $\epsilon$ and use \eqref{O-integral-repr} in the last step.
\end{proof}

\subsection{The probability of being coprime to $P$}

Recall that for the quadratic twists, we proved in Proposition \ref{CoprimeProp} that
$$\frac{|\Hh^{\pm}_N(P\Delta)|}{|\Hh^{\pm}_N(\Delta)|} = \frac{|P|}{|P|+1} + O\big(q^{-N/2}\big)$$
for all $P\nmid\Delta$. To pick out the lower order terms in Theorem \ref{CubicLowTerm}, we need a similar result for $\F_N(B)$ with an error term that decays as $N$ tends to infinity. Proving such a result is a little more delicate due to the fact that the generating series for $\Hh_N(\Delta)$ has a simple pole at $q^{-1}$, whereas the generating series for $\F_N(B)$ has a double pole at $q^{-1}$. That being said, the rest of this subsection will be devoted to proving the following proposition.

\begin{prop}\label{CubicCoprimeProp}
Let $P$ be a prime of degree $m$. If $P\nmid B$, then
$$\frac{|\F_N(PB)|}{|\F_N(B)|} =1 + \sum_{a=1}^{\lfloor\frac{N}{m}\rfloor} \left(\frac{-2}{q^m}\right)^a\left(1-\frac{am}{N}\right) + O\left(\frac{1}{Nq^m}\right), $$
whereas if $P|B$, then
$$\frac{|\F_N(PB)|}{|\F_N(B)|} =1. $$
\end{prop}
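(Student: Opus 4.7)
If $P\mid B$, then $\F_N(PB)=\F_N(B)$ and the ratio equals $1$; hence assume $P\nmid B$ and set $m:=\deg P$. The plan is to analyze the generating series $G_B(u):=\sum_{N\geq 0} |\F_N(B)|\,u^N$ together with $G_{PB}(u)$, isolate the dominant double pole at $u=q^{-1}$, and extract coefficients. Writing each cube-free $D$ uniquely as $D_1D_2^2$ with $D_1,D_2$ monic square-free, pairwise coprime, and coprime to $B$, and using the orthogonality relation $\frac{1}{3}(1+\xi_3^k+\xi_3^{2k})=\mathbf{1}_{3\mid k}$ to encode $3\mid\deg D$, gives $G_B(u)=\frac{1}{3}(H_0^B(u)+H_1^B(u)+H_2^B(u))$ with
\begin{equation*}
H_j^B(u)=\prod_{Q\nmid B}\bigl(1+\xi_3^{j\deg Q}u^{\deg Q}+\xi_3^{2j\deg Q}u^{\deg Q}\bigr).
\end{equation*}
A short Euler-product computation (using that $\xi_3^{j\deg Q}+\xi_3^{2j\deg Q}$ takes the value $2$ if $3\mid\deg Q$ and $-1$ otherwise, for $j=1,2$) yields $H_0^B(u)=\zeta_q(u)^2 A_B(u)$ and $H_1^B(u)=H_2^B(u)=\zeta_q(\xi_3 u)\zeta_q(\xi_3^2 u)\widetilde A_B(u)$ with $A_B,\widetilde A_B$ analytic in $|u|<q^{-1/2}$. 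Thus $H_0^B$ has a double pole at $u=q^{-1}$ while $H_1^B+H_2^B$ is analytic there, and residue extraction produces
\begin{equation*}
|\F_N(B)|=\frac{q^N}{3}\bigl[(N+1)A_B(q^{-1})-q^{-1}A_B'(q^{-1})\bigr]+q^N R_B(N)+O\bigl(q^{N(1/2+\epsilon)}\bigr),
\end{equation*}
with $R_B(N)$ bounded and (to leading order) $3$-periodic in $N$.

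The series $G_{PB}(u)$ is obtained by deleting the Euler factor at $P$: $H_j^{PB}(u)=H_j^B(u)/e_j$ with $e_j:=1+\xi_3^{jm}u^m+\xi_3^{2jm}u^m$. I would then split according to $m\bmod 3$. When $3\mid m$, every $e_j$ equals $1+2u^m$, so $G_{PB}(u)=G_B(u)/(1+2u^m)$ and extracting the $u^N$ coefficient gives the exact identity
\begin{equation*}
|\F_N(PB)|=\sum_{a=0}^{\lfloor N/m\rfloor}(-2)^a|\F_{N-am}(B)|.
\end{equation*}
When $3\nmid m$, $e_0=1+2u^m$ and $e_1=e_2=1-u^m$; the identity $\frac{1}{1-u^m}-\frac{1}{1+2u^m}=\frac{3u^m}{(1-u^m)(1+2u^m)}$ gives
\begin{equation*}
G_{PB}(u)=\frac{G_B(u)}{1+2u^m}+\frac{(H_1^B(u)+H_2^B(u))\,u^m}{(1-u^m)(1+2u^m)}.
\end{equation*}
Since $H_1^B+H_2^B$ is analytic at $u=q^{-1}$, and $\frac{u^m}{(1-u^m)(1+2u^m)}=\sum_{b\geq 1}\frac{1-(-2)^b}{3}u^{bm}$ with $|1-(-2)^b|\leq 2^{b+1}$, the extra term contributes to $|\F_N(PB)|$ at most $O\bigl(q^N\sum_{b\geq 1}(2/q^m)^b\bigr)=O(q^N/q^m)$; dividing by $|\F_N(B)|\asymp N q^N$ yields an $O(1/(Nq^m))$ correction.

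It remains to analyze $\sum_{a=0}^{\lfloor N/m\rfloor}(-2)^a |\F_{N-am}(B)|/|\F_N(B)|$. The asymptotic for $|\F_{N'}(B)|$ gives
\begin{equation*}
\frac{|\F_{N-am}(B)|}{|\F_N(B)|}=q^{-am}\Bigl(1-\frac{am}{N}\Bigr)+q^{-am}\delta_a(N),
\end{equation*}
where $\delta_a(N)=O(am/N^2)+(R_B(N-am)-R_B(N))/(N\alpha_B)$ with $\alpha_B:=A_B(q^{-1})\neq 0$. Summed against $(-2)^a$, the first summand contributes $O(m/(N^2 q^m))=O(1/(Nq^m))$ because $\sum_a a(-2/q^m)^a=O(1/q^m)$. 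For the second summand, one uses that $R_B$ is $3$-periodic to leading order: when $3\mid m$ the step $am$ preserves the period so $R_B(N-am)-R_B(N)$ is exponentially small; when $3\nmid m$ the non-cancelling oscillation is exactly the contribution isolated by the extra generating-function term from the previous paragraph. Combining these estimates proves the claim.

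\textbf{Main obstacle.} The central difficulty is to refine the naive error $O(1/N)$ (which any direct analysis of the ratio $|\F_{N-am}(B)|/|\F_N(B)|$ produces, since the subleading oscillating term $R_B(N)$ in $|\F_N(B)|$ is merely of size $q^N$, one factor of $N$ smaller than the main term) down to $O(1/(Nq^m))$. The improvement requires combining the $3$-periodicity of $R_B$ (forcing cancellation when $3\mid m$) with the algebraic identity that separates the Euler factor $1+2u^m$ from $1-u^m$ in $G_{PB}$ when $3\nmid m$ (which absorbs the non-cancelling oscillation into a uniformly $O(q^N/q^m)$ correction). Verifying that these two mechanisms combine to yield a single uniform error bound across both congruence classes of $m\bmod 3$ is the key technical point.
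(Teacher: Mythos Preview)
Your approach and the paper's are the same at heart: both study $\mathcal{G}(u;B)=\frac13(H_0+2H_1)$ via its Euler product, identify the double pole of $H_0$ and the simple poles of $H_1$ on $|u|=q^{-1}$, and extract coefficients. The difference is organizational. The paper avoids your case split on $m\bmod 3$ by iterating, for \emph{each} $j$ separately, the relation $H_j(u;B)=\bigl(1+(\xi_3^ju)^m+(\xi_3^{2j}u)^m\bigr)H_j(u;PB)$ to obtain the exact identity
\[
[u^d]H_j(u;PB)=\sum_{a=0}^{\lfloor d/m\rfloor}(-1)^a\bigl(\xi_3^{jm}+\xi_3^{2jm}\bigr)^a[u^{d-am}]H_j(u;B),
\]
and then inserts the asymptotics $[u^d]H_0=L(d)q^d+O(q^{(\frac12+\epsilon)d})$ and $[u^d]H_1=Cq^d+O(q^{(\frac12+\epsilon)d})$ (with $L$ linear, $C$ bounded). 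This treats both congruence classes of $m$ uniformly, whereas your decomposition $G_{PB}=G_B/(1+2u^m)+(\text{extra})$ is specific to $3\nmid m$ and requires separate bookkeeping.

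One remark on your ``main obstacle'': you have overcomplicated the error analysis. Since $R_B$ is \emph{bounded}, the difference $(R_B(N-am)-R_B(N))/(N\alpha_B)$ is $O(1/N)$ uniformly in $a$, and multiplying by $(-2/q^m)^a$ and summing over $a\ge 1$ already gives
\[
O\Bigl(\tfrac{1}{N}\Bigr)\sum_{a\ge 1}(2/q^m)^a=O\Bigl(\tfrac{1}{Nq^m}\Bigr).
\]
No appeal to $3$-periodicity (in the $3\mid m$ case) or to ``absorption by the extra term'' (in the $3\nmid m$ case) is needed; indeed the latter claim is muddled, since the extra term is a separate additive contribution that you had already bounded independently. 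The upshot is that your proof is correct, but the difficulty you flag is not there, and the paper's uniform treatment via the $H_j$-level recurrence makes this transparent.
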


The case where $P|B$ is trivial since in this case $\F_N(B) = \F_N(PB)$. Thus, we will consider only the case $P\nmid B$. Towards this goal, we define the generating series
$$\mathcal{G}(u;B)  := \sum_{N=0}^{\infty} |\F_N(B)|u^N.$$
For any analytic function $K(u)$ defined in an open neighborhood of the origin, we define $[u^d]K(u)$ as the $d$th coefficient in the Taylor expansion of $K(u)$ around $0$. Therefore,
$$\frac{|\F_{N}(PB)|}{|\F_{N}(B)|} = \frac{[u^N]\mathcal{G}(u;PB)}{[u^N]\mathcal{G}(u;B)}.$$

Similar to when we bounded $E_P$ (see Proposition \ref{EPprop}), we note that every element in $\F_N(B)$ can be written as $D_1D_2^2$ where $\deg(D_1D_2)=N$, $3|\deg(D_1D_2^2)$ and the $D_i$ are monic, square-free, coprime to each other and coprime to $B$. Hence, we obtain that
\begin{align*}
    \mathcal{G}(u;B) & = \frac{1}{3} \sum_{\substack{D_1,D_2  \\ (D_1D_2,B)=1 }}  \mu^2(D_1D_2)\left(1+\xi_3^{\deg(D_1D_2^2)}+\xi_3^{2\deg(D_1D_2^2)}\right) u^{\deg(D_1D_2)} \\
    & = \frac{1}{3}\big( H_0(u;B) + H_1(u;B)+H_2(u;B) \big),
\end{align*}
where $\xi_3$ is a primitive cube root of unity and
\begin{align*}
    H_j(u;B) := & \sum_{\substack{D_1,D_2 \\ (D_1D_2,B)=1}} \mu^2(D_1D_2) \xi_3^{j\deg(D_1D_2^2)}u^{\deg(D_1D_2)} \\
    = & \prod_{Q\nmid B} \left(1+(\xi_3^ju)^{\deg(Q)}+(\xi_3^{2j}u)^{\deg(Q)}\right).
\end{align*}
It is clear that $H_1(u;B) = H_2(u;B)$ and so we can write
\begin{align*}
    \mathcal{G}(u;B) = \frac{1}{3} \big(H_0(u;B) + 2H_1(u;B)\big).
\end{align*}

\begin{lem}\label{Lem-dth-coeff}
Let $\epsilon>0$ and $d\in\Z_{\geq0}$. Then there exists a linear polynomial $L$ such that
$$[u^d]H_0(u;B) = L(d)q^d+ O_{\epsilon}\big(q^{(\frac{1}{2}+\epsilon)d}\big).$$
Furthermore, there exists a constant $C$ such that
$$[u^d]H_1(u;B) = Cq^d+O_{\epsilon}\big(q^{(\frac{1}{2}+\epsilon)d}\big).$$
\end{lem}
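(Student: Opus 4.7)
The plan is to extract the Taylor coefficients $[u^d]H_0$ and $[u^d]H_1$ by contour integration, having first isolated the pole structure of each generating function inside the disk $|u|<q^{-1/2}$. Once this is achieved, deforming a small circle around $0$ to $|u|=q^{-1/2-\epsilon}$ converts the coefficient into a sum of residues (the main term) plus an integral on the outer contour of size $O_\epsilon(q^{(1/2+\epsilon)d})$; the latter estimate requires only that $H_j(u;B)$ stay bounded on $|u|=q^{-1/2-\epsilon}$, which follows because $|q^2u^2|=q^{1-2\epsilon}$ dominates the other terms in the relevant denominators.

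For $H_0(u;B)=\prod_{Q\nmid B}(1+2u^{\deg Q})$, the identity $(1+2t)(1-t)^2=1-3t^2+2t^3$ gives the factorization
\[H_0(u;B)=\zeta_q(u)^2\,R_B(u),\qquad R_B(u):=\prod_{Q\mid B}(1-u^{\deg Q})^2\prod_{Q\nmid B}\bigl(1-3u^{2\deg Q}+2u^{3\deg Q}\bigr).\]
The product defining $R_B$ converges absolutely for $|u|<q^{-1/2}$, so $H_0(u;B)/u^{d+1}$ has only a double pole at $u=q^{-1}$ inside the enlarged disk (apart from $u=0$). A direct residue calculation gives $[u^d]H_0(u;B)=L(d)q^d+O_\epsilon(q^{(1/2+\epsilon)d})$ with $L(d)=R_B(q^{-1})(d+1)-R_B'(q^{-1})/q$, which is linear in $d$.

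For $H_1(u;B)$ the Euler factor at $Q\nmid B$ is $1+2u^{\deg Q}$ if $3\mid\deg Q$ and $1-u^{\deg Q}$ otherwise. Taking logarithms, the $k\ge 2$ terms of $\log(1+c_Qu^{\deg Q})$ as well as the finitely many factors at $Q\mid B$ sum to a function analytic on $|u|<q^{-1/2}$, and the remaining linear-in-$u^{\deg Q}$ piece reorganizes as $3\sum_{3\mid\deg Q}u^{\deg Q}-\sum_Q u^{\deg Q}$. Detecting the condition $3\mid\deg Q$ by $\tfrac13(1+\xi_3^{\deg Q}+\xi_3^{2\deg Q})$ and applying $\log\zeta_q(u)+\log\zeta_q(\xi_3u)+\log\zeta_q(\xi_3^2u)=-\log(1-q^3u^3)$ turns this into $\log\bigl((1-qu)/(1-q^3u^3)\bigr)=-\log(1+qu+q^2u^2)$. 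Hence
\[H_1(u;B)=\frac{M_B(u)}{1+qu+q^2u^2},\]
with $M_B$ analytic in $|u|<q^{-1/2}$; summing the two simple residues of $H_1/u^{d+1}$ at $u=\xi_3^{\pm 1}/q$ yields $[u^d]H_1(u;B)=Cq^d+O_\epsilon(q^{(1/2+\epsilon)d})$.

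The main obstacle is really the analytic continuation needed for $H_1$: the Euler product only converges for $|u|<q^{-1}$, so one must argue carefully that, after pulling out the explicit denominator $1+qu+q^2u^2$, what remains genuinely extends to an analytic function on $|u|<q^{-1/2}$ (rather than, say, introducing fractional logarithmic singularities from $(1-q^3u^3)^{1/3}$ that could appear separately in the two subproducts over primes with $3\mid\deg Q$ and $3\nmid\deg Q$). A subtle point worth flagging is that the two residues of $H_1/u^{d+1}$ at $u=\xi_3^{\pm 1}/q$ carry factors $\xi_3^{\pm(d+1)}$, so that the leading coefficient $C$ is naturally a bounded periodic function of $d\bmod 3$ rather than a literal constant; the statement of the lemma should be read as $C=O_B(1)$, which is all that is needed when it is invoked in the proof of Proposition \ref{CubicCoprimeProp}.
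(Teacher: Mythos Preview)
Your argument is correct, and for $H_0$ it coincides with the paper's: both factor out $\zeta_q(u)^2=(1-qu)^{-2}$ Euler factor by Euler factor (the paper writes this as $(1-\xi_3^0qu)^{-1}(1-\xi_3^{2\cdot 0}qu)^{-1}$), compute the residue at the double pole, and bound the outer integral.

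For $H_1$ the paper takes a shorter, more uniform route. It treats $j=0,1,2$ simultaneously by multiplying and dividing each Euler factor by $(1-(\xi_3^ju)^{\deg Q})(1-(\xi_3^{2j}u)^{\deg Q})$, obtaining
\[
H_j(u;B)=\frac{\widetilde H_j(u;B)}{(1-\xi_3^jqu)(1-\xi_3^{2j}qu)}
\]
with $\widetilde H_j$ an Euler product whose local factors are $1+O(u^{2\deg Q})$ at $Q\nmid B$, hence absolutely convergent for $|u|<q^{-1/2}$. For $j=1$ this directly places simple poles at $u=\xi_3^{\pm1}/q$, i.e.\ exhibits your denominator $1+qu+q^2u^2$, without passing through logarithms, without splitting the primes by $\deg Q\bmod 3$, and without the branch-point worry you flag. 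Your detour works, but the paper's uniform factorisation is what makes the ``main obstacle'' you describe disappear: since one never separates the two sub-products, the fractional powers $(1-q^3u^3)^{\pm 1/3}$ simply never arise.

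Your closing remark about $C$ is on point: in the paper's own computation the residues carry factors $\xi_3^{-d}$ and $\xi_3^{-2d}$, so $C=C_1+C_2$ is a real quantity that is periodic in $d\bmod 3$ rather than a constant independent of $d$. The paper glosses over this; as you note, only $C=O_B(1)$ is needed downstream (in Corollary~\ref{FNBsize} and Proposition~\ref{CubicCoprimeProp} the $C$-terms are absorbed into the $O(1/(Nq^m))$ error), so nothing is lost.
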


\begin{proof}
We have
\begin{align*}
    H_j(u;B) & = \prod_{Q\nmid B} \left(1+(\xi_3^ju)^{\deg(Q)}+(\xi_3^{2j}u)^{\deg(Q)}\right) \\
    & = \prod_{Q}\left(1-(\xi_3^ju)^{\deg(Q)}\right)^{-1}\prod_{Q}\left(1-(\xi_3^{2j}u)^{\deg(Q)}\right)^{-1} \widetilde{H}_j(u;B)\\
    & = \frac{1}{\big(1-\xi_3^jqu\big)\big(1-\xi_3^{2j}qu\big)} \widetilde{H}_j(u;B),
\end{align*}
where 
$$\widetilde{H}_j(u;B) = \prod_{Q\mid B}\left(1+O\big(u^{\deg(Q)}\big)\right)\prod_{Q\nmid B}\left(1+O\big(u^{2\deg(Q)}\big)\right)$$ 
is absolutely convergent in the region $|u|<q^{-1/2}$. Let $\epsilon>0$ and  $\Gamma= \{u : |u| = q^{-1/2-\epsilon}\}$. Then $\frac{H_j(u;B)}{u^{d+1}}$ is meromorphic in the region bounded by $\Gamma$ with poles at $u=0$ and $u=q^{-1}$ if $j=0$, and with poles at $u=0$, $u=\xi_3q^{-1}$ and $u=\xi_3^2q^{-1}$ if $j=1,2$. Hence,
$$\frac1{2\pi i}\oint_{\Gamma} \frac{H_j(u;B)}{u^{d+1}}\,du \ll \max_{u\in \Gamma} \left(\frac{|H_j(u;B)|}{|u|^{d}}\right) \ll_{\epsilon}  q^{(\frac{1}{2}+\epsilon)d}.$$

On the other hand, if $j=0$, then
$$\frac1{2\pi i}\oint_{\Gamma} \frac{H_0(u;B)}{u^{d+1}}\,du = \Res_{u=0}\left(\frac{H_0(u;B)}{u^{d+1}}\right) + \Res_{u=q^{-1}}\left(\frac{H_0(u;B)}{u^{d+1}}\right). $$
Moreover, we note that
$$\Res_{u=0}\left(\frac{H_0(u;B)}{u^{d+1}}\right) = [u^d]H_0(u;B)$$
and
\begin{align*}
    \Res_{u=q^{-1}}\left(\frac{H_0(u;B)}{u^{d+1}}\right) & = \lim_{u\to q^{-1}} \frac{d}{du} \left( \frac{(u-q^{-1})^2}{(1-qu)^2} \frac{\widetilde{H}_0(u;B)}{u^{d+1}}  \right) \\
    & = -\big((d+1)\widetilde{H}_0(q^{-1};B) - q^{-1}\widetilde{H}'_0(q^{-1};B)\big)q^d,
\end{align*}
which proves the result for $j=0$. Finally, if $j=1$, then we have 
\begin{align*}\frac1{2\pi i}\oint_{\Gamma} \frac{H_1(u;B)}{u^{d+1}}\,du & = \Res_{u=0}\left(\frac{H_1(u;B)}{u^{d+1}}\right) \\ 
& + \Res_{u=\xi_3q^{-1}}\left(\frac{H_1(u;B)}{u^{d+1}}\right)+ \Res_{u=\xi^2_3q^{-1}}\left(\frac{H_1(u;B)}{u^{d+1}}\right),
\end{align*}
where
$$\Res_{u=0}\left(\frac{H_1(u;B)}{u^{d+1}}\right) = [u^d]H_1(u;B)$$
and
\begin{align*}
    \Res_{u=\xi_3q^{-1}}\left(\frac{H_1(u;B)}{u^{d+1}}\right) & = \lim_{u\to \xi_3q^{-1}}  \frac{(u-\xi_3q^{-1})\widetilde{H}_1(u;B)}{(1-\xi_3qu)(1-\xi^2_3qu)u^{d+1}}\\
    & = -\bigg(\frac{\widetilde{H}_1(\xi_3q^{-1};B)}{(1-\xi_3^2)\xi_3^d}\bigg)q^d = -C_1q^d.
\end{align*}
By a similar calculation, we find that the residue at $\xi^2_3q^{-1}$ equals $-C_2q^d$ for a suitable constant $C_2$. Setting $C=C_1+C_2$ completes the proof.
\end{proof}

Lemma \ref{Lem-dth-coeff} has the following immediate consequence.

\begin{cor}\label{FNBsize}
Let $L$ and $C$ be as in Lemma \ref{Lem-dth-coeff}. Then, we have 
$$|\F_N(B)| = \frac{1}{3}\big(L(N)+2C\big)q^N + O_{\epsilon}\big(q^{(\frac{1}{2}+\epsilon)N}\big).$$
\end{cor}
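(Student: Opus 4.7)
The plan is to simply unpack the definitions and invoke Lemma \ref{Lem-dth-coeff}. Recall from the text preceding the lemma that
$$\mathcal{G}(u;B) = \sum_{N=0}^{\infty} |\F_N(B)|u^N = \frac{1}{3}\bigl(H_0(u;B) + 2H_1(u;B)\bigr),$$
so by the definition of the coefficient extraction operator $[u^N]$ we have $|\F_N(B)| = [u^N]\mathcal{G}(u;B)$.

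My approach is therefore to apply $[u^N]$ to both sides of the decomposition, using linearity of coefficient extraction, to obtain
$$|\F_N(B)| = \frac{1}{3}\bigl([u^N]H_0(u;B) + 2\,[u^N]H_1(u;B)\bigr).$$
Then I substitute the two asymptotic formulas from Lemma \ref{Lem-dth-coeff} with $d=N$: the first gives $[u^N]H_0(u;B) = L(N)q^N + O_\epsilon(q^{(1/2+\epsilon)N})$ for a certain linear polynomial $L$, and the second gives $[u^N]H_1(u;B) = Cq^N + O_\epsilon(q^{(1/2+\epsilon)N})$ for a constant $C$. Summing and absorbing the two error terms into a single $O_\epsilon(q^{(1/2+\epsilon)N})$ yields the claimed expression $\frac{1}{3}(L(N)+2C)q^N + O_\epsilon(q^{(1/2+\epsilon)N})$.

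There is no serious obstacle here; the entire content of the corollary is bookkeeping on top of Lemma \ref{Lem-dth-coeff}, which is why it is presented as an immediate consequence. The only small matter worth double-checking is that the same $\epsilon$-dependence of the error can be chosen uniformly in the two applications of the lemma (it can, since $\epsilon$ is arbitrary), and that $L$ and $C$ are the very same objects produced by the lemma, so that the statement is self-consistent without re-deriving their explicit forms in terms of $\widetilde{H}_0, \widetilde{H}_0'$ and $\widetilde{H}_1$.
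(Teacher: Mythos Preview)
Your proposal is correct and matches the paper's proof essentially line for line: the paper also writes $|\F_N(B)| = [u^N]\mathcal{G}(u;B) = \tfrac{1}{3}[u^N]\bigl(H_0(u;B)+2H_1(u;B)\bigr)$ and then substitutes the two estimates from Lemma~\ref{Lem-dth-coeff}. There is nothing to add.
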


\begin{proof}
Using Lemma \ref{Lem-dth-coeff}, we obtain
\begin{align*}
    |\F_N(B)| & = [u^N]\mathcal{G}(u;B)  = \frac{1}{3} [u^N]\big(H_0(u;B)+2H_1(u;B)\big)\\
    & = \frac{1}{3}\big(L(N)+2C\big)q^N + O_{\epsilon}\big(q^{(\frac{1}{2}+\epsilon)N}\big),
\end{align*}
as desired.
\end{proof}

In order to prove Proposition \ref{CubicCoprimeProp}, we need to know how to pass from $[u^d]\mathcal{G}(u;B)$ to $[u^d]\mathcal{G}(u;PB)$. Fortunately, using the fact that $H_j(u;PB)$ has an Euler product, it is straightforward to pass from $[u^d]H_j(u;B)$ to $[u^d]H_j(u;PB)$.

\begin{lem}\label{Lem-H-dth-coeff}
Let $d\in\Z_{\geq0}$ and let $P$ be a prime of degree $m$ such that $P\nmid B$. Then
$$[u^d]H_j(u;PB) = \sum_{a=0}^{\lfloor\frac{d}{m}\rfloor}(-1)^a \left(\zeta_3^{jm}+\zeta_3^{2jm}\right)^a [u^{d-am}]H_j(u;B).$$
\end{lem}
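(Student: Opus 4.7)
The plan is to exploit the Euler product structure of $H_j(u;B)$ directly. Starting from the definition
\begin{equation*}
H_j(u;B) = \prod_{Q\nmid B} \left(1+(\xi_3^ju)^{\deg(Q)}+(\xi_3^{2j}u)^{\deg(Q)}\right),
\end{equation*}
and assuming $P \nmid B$ with $\deg(P) = m$, the product defining $H_j(u;PB)$ is obtained from the one defining $H_j(u;B)$ by simply removing the Euler factor at $P$. Hence
\begin{equation*}
H_j(u;PB) = \frac{H_j(u;B)}{1 + \xi_3^{jm} u^m + \xi_3^{2jm} u^m} = \frac{H_j(u;B)}{1 + (\xi_3^{jm} + \xi_3^{2jm})\, u^m}.
\end{equation*}

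Next, I would expand the reciprocal factor as a geometric series, which is legitimate for $|u|$ sufficiently small:
\begin{equation*}
\frac{1}{1 + (\xi_3^{jm} + \xi_3^{2jm})\, u^m} = \sum_{a=0}^{\infty} (-1)^a \bigl(\xi_3^{jm} + \xi_3^{2jm}\bigr)^a u^{am}.
\end{equation*}
Substituting back yields
\begin{equation*}
H_j(u;PB) = H_j(u;B) \cdot \sum_{a=0}^{\infty} (-1)^a \bigl(\xi_3^{jm} + \xi_3^{2jm}\bigr)^a u^{am}.
\end{equation*}

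Finally, I would extract the coefficient of $u^d$ via the Cauchy product. Since only terms with $am \leq d$ contribute a nonzero $[u^{d-am}]H_j(u;B)$ (after truncation to the polynomial-like part relevant for a fixed $d$), the sum collapses to the range $0 \leq a \leq \lfloor d/m\rfloor$, giving
\begin{equation*}
[u^d]H_j(u;PB) = \sum_{a=0}^{\lfloor d/m\rfloor} (-1)^a \bigl(\xi_3^{jm} + \xi_3^{2jm}\bigr)^a \, [u^{d-am}]H_j(u;B),
\end{equation*}
which is exactly the claim. There is no real obstacle here: the argument is a purely formal manipulation of generating series, and the only thing to verify is that the factorization and geometric expansion are valid in a common neighborhood of $u=0$, which is immediate since both sides are power series with no constant-term vanishing and $H_j(u;B)$ converges in a small disk around the origin.
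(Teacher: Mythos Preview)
Your proof is correct and follows essentially the same approach as the paper: both arguments hinge on the factorization $H_j(u;B) = \bigl(1+(\xi_3^{jm}+\xi_3^{2jm})u^m\bigr)H_j(u;PB)$ coming from the Euler product. The only cosmetic difference is that the paper extracts the recurrence $[u^d]H_j(u;PB) = [u^d]H_j(u;B) - (\xi_3^{jm}+\xi_3^{2jm})[u^{d-m}]H_j(u;PB)$ and iterates it, whereas you invert the factor directly via a geometric series and read off the Cauchy product; these are the same computation.
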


\begin{proof}
From the definition of $H_j(u;B)$, we see that
$$H_j(u;B) = \left(1+(\xi_3^ju)^m+(\xi_3^{2j}u)^m\right)H_j(u;PB)$$
and it follows that 
$$[u^d]H_j(u;B) = [u^d]H_j(u;PB) + \left(\zeta_3^{jm}+\zeta_3^{2jm}\right)[u^{d-m}]H_j(u;PB).$$
Rearranging, we then get
\begin{multline*}
    [u^d]H_j(u;PB) = [u^d]H_j(u;B) -\left(\zeta_3^{jm}+\zeta_3^{2jm}\right)[u^{d-m}]H_j(u;PB)\\
    = [u^d]H_j(u;B) -\left(\zeta_3^{jm}+\zeta_3^{2jm}\right)[u^{d-m}]H_j(u;B) 
    + \left(\xi_3^{jm}+\xi_3^{2jm}\right)^2 [u^{d-2m}]H_j(u;PB),
\end{multline*}
and iterating this procedure, we obtain 
\begin{align*}
    [u^d]H_j(u;PB) = \sum_{a=0}^{\lfloor\frac{d}{m}\rfloor} (-1)^a\left(\zeta_3^{jm}+\zeta_3^{2jm}\right)^a [u^{d-am}]H_j(u;B).
\end{align*}
This concludes the proof of the lemma.
\end{proof}

Using Lemmas \ref{Lem-dth-coeff} and \ref{Lem-H-dth-coeff}, we now complete the proof of Proposition \ref{CubicCoprimeProp}. 

\begin{proof}[Proof of Proposition \ref{CubicCoprimeProp}]
It follows from Lemmas \ref{Lem-dth-coeff} and \ref{Lem-H-dth-coeff} that if $P\nmid B$, then
\begin{align*}
    |\F_N(PB)| & = [u^N]\mathcal{G}(u;PB) = \frac{1}{3}[u^N]\big(H_0(u;PB) + 2H_1(u;PB)\big) \\ 
    & \hspace{-30pt} = \frac{1}{3}\sum_{a=0}^{\lfloor\frac{N}{m}\rfloor}(-1)^a [u^{N-am}]\left( 2^a H_0(u;B) + 2\left(\zeta_3^m+\zeta_3^{2m}\right)^a H_1(u;B) \right)\\
    & \hspace{-30pt} = |\F_N(B)| + \frac{1}{3} \sum_{a=1}^{\lfloor\frac{N}{m}\rfloor} (-1)^a \left(2^a L(N-am) + 2C\left(\zeta_3^m+\zeta_3^{2m}\right)^a \right)q^{N-am} 
    + O_{\epsilon}\big(q^{(\frac{1}{2}+\epsilon)(N-m)}\big).
\end{align*}
Hence, by Corollary \ref{FNBsize}, we find that $\frac{|\F_N(PB)|}{|\F_N(B)|}$ equals
\begin{align*}
    1 & + \frac{\frac{1}{3}\sum_{a=1}^{\lfloor\frac{N}{m}\rfloor} (-1)^a \left(2^a L(N-am)+2C\left(\zeta_3^m+\zeta_3^{2m}\right)^a \right)q^{N-am} +O_{\epsilon}\big(q^{(\frac{1}{2}+\epsilon)(N-m)}\big)}{\frac{1}{3}\big(L(N)+2C\big)q^N + O_{\epsilon}\big(q^{(\frac{1}{2}+\epsilon)N}\big)}\\
    & = 1 + \sum_{a=1}^{\lfloor\frac{N}{m}\rfloor} \left(\frac{-2}{q^m}\right)^a\left(1-\frac{am}{N}\right) + O\left(\frac{1}{Nq^m}\right),
\end{align*}
which is the desired result. 
\end{proof}

We conclude this subsection with an observation that is useful in the proof of Lemma \ref{BadPrimeBound}.

\begin{rem}\label{Sec2Rem}
We can use the same ideas as above to prove that for any prime $P$, we have
\begin{align*}
    & |\{ D\in \F_N(B) : P|D \}| \\ 
    & \hspace{20pt} \leq 2\left|\big\{ D\in \Ff_q[T]: D \mbox{ monic, cube-free and } \deg(\rad(D))=N-\deg(P) \big\}\right|\\
    & \hspace{20pt} \ll (N-\deg(P)) q^{N-\deg(P)}   \ll \frac{|\F_N(B)|}{q^{\deg(P)}}.
\end{align*} 
Indeed, the first inequality is obvious as we have removed the conditions of being coprime to $B$ and $\deg(D)\equiv 0\bmod{3}$, and the factor $2$ accounts for the two cases $P\|D$ and $P^2\|D$. The second bound follows by the same proof as that of Corollary \ref{FNBsize} with the minor change that here we only need to consider $H_0(u;1)$ as we have removed the conditions $(D,B)=1$ and $\deg(D)\equiv 0\bmod{3}$. 
\end{rem}

\subsection{Proof of Theorem \ref{CubicLowTerm}}

We now have everything we need to complete the proof of Theorem \ref{CubicLowTerm}.

\begin{proof}[Proof of Theorem \ref{CubicLowTerm}]
Recall from \eqref{CubicTrForm2} that
\begin{align*}
    \big\langle\Tr(\Theta_{\widetilde{E}_D}^n)\big\rangle_{\F_N(B)} = M(n,N) + S(n,N) + E(n,N) + O\left(\frac{1}{q^{3n/8}} + \frac{n(\deg(\Delta)+\tau(n))}{q^{n/2}}\right).
\end{align*}
Using Proposition \ref{EPprop} to bound $E(n,N)$, we get
\begin{multline*}
    E(n,N) = \frac{n}{q^{n/2}} \sum_{\deg(P)=n} \big(\lambda_P\overline{E}_P+\overline{\lambda}_PE_P\big)  - \frac{n/2}{q^{n/2}} \sum_{\deg(P)=\frac{n}{2}} \big(\lambda_P^2E_P+\overline{\lambda}^2_P\overline{E}_P\big)  \\
    + \frac{n/3}{q^{n/2}} \sum_{\deg(P)=\frac{n}{3}} \ 3(|\lambda_P|^2-1)\big(\lambda_P\overline{E}_P+\overline{\lambda}_PE_P\big) \ll_{\epsilon} \frac{q^{n/2}e^{2n}}{Nq^{(\frac{1}{2}-\epsilon)N}}.
\end{multline*}
Finally, we use Proposition \ref{CubicCoprimeProp} to estimate $M(n,N)$ and $S(n,N)$. We obtain
\begin{align*}
    M(n,N) & = -\frac{n/2}{q^{n/2}}  \sum_{\deg(P)=\frac{n}{2} } 2(|\lambda_P|^2-1)  \frac{|\F_N(PB)|}{|\F_N(B)|} \\
    & =  -\frac{n/2}{q^{n/2}}  \sum_{\deg(P)=\frac{n}{2}} 2(|\lambda_P|^2-1) + \frac{\widetilde{\mathcal D}_1(n)}{q^{n/2}} + O\left(\frac{1}{Nq^{n/2}}\right),
\end{align*}
where
\begin{align}\label{CubicDForm1}
\widetilde{\mathcal D}_1(n) :=  -\frac{n}{2} \sum_{\substack{\deg(P)=\frac{n}{2} \\ P\nmid B}} 2(|\lambda_P|^2-1) \sum_{a=1}^{\lfloor\frac{2N}{n}\rfloor} \left(\frac{-2}{q^{n/2}}\right)^a\left(1-\frac{an}{2N}\right) \ll 1,
\end{align}
and
\begin{align*}
    S(n,N)  & = \frac{n/3}{q^{n/2}}  \sum_{ \deg(P)=\frac{n}{3}} \big(\lambda^3_P + \overline{\lambda}^3_P\big) \frac{|\F_N(PB)|}{|\F_N(B)|} \\
    & = \frac{n/3}{q^{n/2}}  \sum_{ \deg(P)=\frac{n}{3}} \big(\lambda^3_P + \overline{\lambda}^3_P\big) + \frac{\widetilde{\mathcal{D}}_2(n)}{q^{n/2}} + O\left(\frac{1}{Nq^{n/2}}\right), 
\end{align*}
where 
\begin{align}\label{CubicDform2}
    \widetilde{\mathcal{D}}_2(n):= \frac{n}{3}  \sum_{\substack{ \deg(P)=\frac{n}{3} \\ P\nmid B}} \big(\lambda^3_P + \overline{\lambda}^3_P\big) \sum_{a=1}^{\lfloor\frac{3N}{n}\rfloor} \left(\frac{-2}{q^{n/3}}\right)^a\left(1-\frac{an}{3N}\right)\ll 1.
\end{align}
This completes the proof.
\end{proof}

\section{Heuristics and conjectures}\label{Section:heuristics}

In this section, we return to the heuristic arguments in Section \ref{Intro-Heur-Conj}. We first prove Corollary \ref{maincor}. 

\begin{proof}[Proof of Corollary \ref{maincor}]
First, we note that 
$$\frac{n/3}{q^{n/2}}\sum_{\deg(P)=\frac{n}{3}} \big(\lambda_P^3+\overline{\lambda}_P^3\big) \ll \frac{1}{q^{n/6}}.$$
Hence, setting the right-hand sides of Theorems \ref{CubicOLD} and \ref{CubicLowTerm} equal to each other for $n=2m$, we find that
\begin{align}\label{Lambda2-first} 
-\frac{m}{q^m} \sum_{\deg(P)=m}2(|\lambda_P|^2-1) & = 1 - \frac{2m/3}{q^m} \sum_{\deg(P)=\frac{2m}{3}} \big(\lambda_P^3 + \overline{\lambda}^3_P\big) + O\left(\frac{m}{q^{m/2}} \right) \nonumber\\
& = 1 + O\left(\frac{1}{q^{m/3}}\right).
\end{align}
Furthermore, by the Prime Polynomial Theorem, we get
\begin{align}\label{Lambda2-second}
-\frac{m}{q^m} \sum_{\deg(P)=m}2(|\lambda_P|^2-1) = -\frac{2m}{q^m} \sum_{\deg(P)=m} |\lambda_P|^2 +2 + O\left(\frac{m}{q^{m/2}}\right).
\end{align}
Finally, equating the right-hand sides of \eqref{Lambda2-first} and \eqref{Lambda2-second}, we find that
$$\frac{m}{q^m} \sum_{\deg(P)=m} |\lambda_P|^2 = \frac{1}{2} + O\left(\frac{1}{q^{m/3}}\right),$$
as desired.
\end{proof}

Next, we turn our attention to Conjecture \ref{CubicConj}. For evidence of the conjecture, we first consider the term
$$\frac{n/3}{q^{n/2}}\sum_{\deg(P)=\frac{n}{3}} \big(\lambda_P^3+\overline{\lambda}_P^3\big)$$
from the right-hand side of Theorem \ref{CubicLowTerm}. For primes of good reduction, we use \eqref{amPkform} and Lemma \ref{atolambda} to write
\begin{align*}
    a^*_{3,P} & = \alpha_P^3+\beta_P^3 + \alpha_P+\beta_P \\
    & = a^*_{1,P^3} + a^*_{1,P} \\
    & = -\big(\lambda_P^3+\overline{\lambda}_P^3\big) - 3(|\lambda_P|^2-1)(\lambda_P + \overline{\lambda}_P) + a^*_{1,P} \\
    & = -\big(\lambda_P^3+\overline{\lambda}_P^3\big) + (3|\lambda_P|^2-2)a^*_{1,P}.
\end{align*}
Hence, if we replace $|\lambda_P|^2$ with its average value when averaging over $\deg(P)=\frac{n}{3}$ and assume that the contribution of the primes of bad reduction will be of lower order, then we get
\begin{align*}
    \frac{n/3}{q^{n/2}}\sum_{\deg(P)=\frac{n}{3}} \big(\lambda_P^3+\overline{\lambda}_P^3\big) & \approx -\frac{n/3}{q^{n/2}}\sum_{\deg(P)=\frac{n}{3}} a^*_{3,P} +  \frac{n/3}{q^{n/2}}\sum_{\deg(P)=\frac{n}{3}} (3|\lambda_P|^2-2)a^*_{1,P}\\
    & \approx  -\frac{n/3}{q^{n/2}}\sum_{\deg(P)=\frac{n}{3}} a^*_{3,P} - \frac{1}{2} \frac{n/3}{q^{n/2}}\sum_{\deg(P)=\frac{n}{3}} a^*_{1,P} \\
    & \sim \frac{\eta_3(n)}{q^{n/3}} \left( \Tr\big(\Theta^{n/3}_{sym^3\widetilde{E}}\big) + \frac{1}{2}\Tr\big(\Theta^{n/3}_{\widetilde{E}}\big) \right).
\end{align*}
In the final step above, we applied \eqref{SymmEllTraceForm} together with the assumption that the sums in the second line constitute the dominant contribution to the respective trace.

Finally, by a similar argument we find that 
\begin{align*}
    -\frac{n/2}{q^{n/2}} \sum_{\deg(P)=\frac{n}{2}} 2(|\lambda_P|^2-1) 
    \approx \frac{n/2}{q^{n/2}} \sum_{\deg(P)=\frac{n}{2}}1 \sim \eta_2(n).
\end{align*}
Assuming that the tertiary main term is handled in the way described in Remark \ref{Remark:tertiarymainterm} and that all error terms are sufficiently small, we arrive at Conjecture \ref{CubicConj}.

\appendix
\section{Comments on families of twists}\label{Section:comments}

One criterion for a family of function field $L$-functions to be considered ``nice" and thus have interesting symmetries is that all members of the family have the same degree of the conductor, or equivalently, that all $L$-functions in the family are polynomials of the same degree. Ideally, when considering a certain class of $L$-functions, one would like the family to contain all the $L$-functions in that class of a given degree.

For the cubic twists, we consider only the family $\F_N(B)$ and all $L$-functions of twists of $\widetilde{E}$ by a polynomial $D\in\F_N(B)$ have degree $\mathfrak{n}+2N$. However, since the condition $\deg(D)\equiv 0 \bmod{3}$ is included in the definition of $\F_N(B)$, we clearly see that there are cubic twists of $\widetilde{E}$ that do not come from $\F_N(B)$ and, indeed, some of them will have $L$-functions of degree $\mathfrak{n}+2N$. 

It is easy to check that every \textit{finite} prime that divides $D$ will have additive reduction on $\widetilde{E}_D$ (or $E_D$). Each of these primes will then contribute $P^2$ to the conductor of $\widetilde{E}_D$ and so the degree of $L(u,\widetilde{E}_D)$ will always be roughly $\mathfrak{n}+2\deg(\rad(D))$. It remains to check what happens to the prime at infinity.

\subsection{Reduction at the prime at infinity}

Let $E:y^2=x^3+Ax+B$ with $A,B\in\Ff_q[T]$. Define $a:=\deg(A)$ and $b:=\deg(B)$. To analyze what happens at the point at infinity, we let $S:=1/T$ and write
$$A(T) = S^{-a}A^*(S), \quad \quad \quad B(T) := S^{-b}B^*(S)$$
and analyze what happens at $S=0$ for the curve given by the equation
$$y^2 = x^3 +S^{-a}A^*x + S^{-b}B^*. $$
Setting 
$$\ell = \max\left\{\lceil a/2\rceil, \lceil b/3\rceil \right\}$$
and
$$Y = S^{3\ell/2}y, \quad \quad X = S^{\ell}x,$$
we rewrite the equation as 
\begin{align}\label{SmoothInfModel}
    Y^2 = X^3 + S^{2\ell-a}A^*X +S^{3\ell-b}B^*.
\end{align}
We now note that 
$$A^*(0),B^*(0)\not=0$$ 
unless $A=0$ or $B=0$. Moreover, we have
$$2\ell-a,3\ell-b\geq 0$$ 
and either 
$$2\ell-a\leq 1 \qquad \mbox{ or }\qquad 3\ell-b\leq 2.$$
Hence, \eqref{SmoothInfModel} is a minimal Weierstrass equation for $S=0$. Therefore, if $P_{\infty}$ is the prime at infinity, then we define
\begin{align}\label{EPinfinity}
E(P_{\infty}) : Y^2 = X^3 + S^{2\ell-a}A^*X +S^{3\ell-b}B^* \mod{S}.    
\end{align}

We are now ready to discuss the reduction type for the prime at infinity. We get several cases:

\begin{enumerate}
\item If $a/2<b/3$, then $2\ell-a>0$. Hence, reducing mod $S$ in \eqref{EPinfinity}, we get:
\begin{enumerate}
    \item $3|b$ : $Y^2=X^3+B^*(0)$, so $P_{\infty}$ is a prime of \textbf{good reduction};
    \item $3\nmid b$: $Y^2=X^3$, so $P_{\infty}$ is a prime of \textbf{additive reduction}.
\end{enumerate}
\vspace{3pt}
\item If $a/2>b/3$, then $3\ell-b>0$. Hence, reducing mod $S$ in \eqref{EPinfinity}, we get:
\begin{enumerate}
    \item $2|a$ : $Y^2=X^3+A^*(0)X$, so $P_{\infty}$ is a prime of \textbf{good reduction};
    \item $2\nmid a$: $Y^2=X^3$, so $P_{\infty}$ is a prime of \textbf{additive reduction}.
\end{enumerate}
\vspace{3pt}
\item If $a/2=b/3$, then it must be that $2|a$ and $3|b$. Hence, reducing mod $S$ in \eqref{EPinfinity}, we get $Y^2=X^3+A^*(0)X+B^*(0)$. 
\begin{enumerate}
    \item If $4A^*(0)^3+27B^*(0)^2\not=0$, then $P_{\infty}$ is a prime of \textbf{good reduction}.
    \item If $4A^*(0)^3+27B^*(0)^2=0$, then $P_{\infty}$ is a prime of \textbf{multiplicative reduction}.
\end{enumerate}
\end{enumerate}

\subsection{The quadratic twist family}

In the case of quadratic twists, we have
$$E : y^2=x^3+Ax+B$$
and
$$E_D : y^2 = x^3+AD^2x+BD^3.$$
We observe that the conditions on the degrees (from the cases at the end of the previous subsection) aren't changed by twisting by $D\in \Hh^{\pm}_N(\Delta)$. Moreover, in the case that $a/2=b/3$, we get
$$4\big((AD^2)^*(0)\big)^3 + 27\big((BD^3)^*(0)\big)^2 = 4A^*(0)^3+27B^*(0)^2$$
since $D^*(0)=1$. Therefore, the reduction at the prime at infinity is not changed by quadratic twists by $D\in \Hh^{\pm}_N(\Delta)$ and thus $\Hh^{\pm}_N(\Delta)$ is a ``nice" and ``full" family in the above sense.

\subsection{The cubic twist family}

In the case of cubic twists, we have
$$\widetilde{E}: y^2 = x^3 + B$$
and
$$\widetilde{E}_D : y^2 = x^3+BD^2.$$
Therefore, we see that since $A=0$ we always have $a/2<b/3$ and hence are only ever in case $(1)$ above. The following chart captures, for $D$ cube-free and coprime to $B$, what happens to the prime at infinity and how this affects the degree of the conductor:

\vspace{3pt}
\begin{center}
\begin{tabular}{c|c|c|c|c}
    $b\bmod{3}$ & $\deg(D) \bmod{3}$ & Ram. for $\widetilde{E}$ &  Ram. for $\widetilde{E}_D$ & $\mathfrak{n}_{\widetilde{E}_D}-\mathfrak{n}_{\widetilde{E}}$\\
    \hline
     $0$ & $0$ & good & good &  $2\deg(\rad(D))$\\
     $0$ & $\not\equiv 0$ & good & additive & $2(\deg(\rad(D))+1)$\\
     $\not\equiv 0$ & $b$  & additive & good & $2(\deg(\rad(D))-1)$\\
     $\not\equiv{0}$ & $\not\equiv b$ & additive & additive & $2\deg(\rad(D))$
\end{tabular}
\end{center}
\vspace{3pt}

As a consequence, to create a family of cubic twists in which all curves have the same degree of the conductor, we define the set
$$\widehat{\F}_N(B) := \big\{D\in\Ff_q[T] : D \mbox{ monic, cube-free}, (D,B)=1, \deg(\rad(D))=N \big\}.$$
As we see above, the ramification at infinity depends only on the congruence of $\deg(D)\mod{3}$. Hence, we define
$$\widehat{\F}_{N;k}(B) := \big\{D\in\widehat{\F}_N(B) : \deg(D)\equiv k \bmod{3} \big\},$$
$$\mathcal{J}_N(B) := \widehat{\F}_{N;b}(B) \cup \widehat{\F}_{N-1,b+1}(B) \cup \widehat{\F}_{N-1,b+2}(B),$$
and
$$\mathcal{K}_N(B):=\begin{cases} \mathcal{J}_N(B) & 3|b, \\ \mathcal{J}_{N+1}(B) & 3\nmid b. \end{cases}$$
From all this, we may now conclude that the family
$$\big\{\widetilde{E}_D : D\in \mathcal{K}_N(B)\big\}$$
consists of all cubic twists of $\widetilde{E}$ with $L$-functions of degree $\mathfrak{n}+2N$. 

Finally, we note that $\widehat{\F}_{N;0}(B)=\F_{N}(B)$. It is relatively easy to see how one would adapt our methods in Section \ref{Section:cubic} to deal with the ``full" family $\mathcal{K}_N(B)$. One would just have to split everything into the relevant cases and the same proofs would work. Hence, we leave this as a comment and end the discussion here.

\end{document}